\documentclass[12pt]{article}
\usepackage[english]{babel}
\usepackage{amsfonts}
\usepackage{amssymb}
\usepackage{amsmath}
\usepackage{amsthm}
\usepackage{latexsym}
\usepackage{bm}
\usepackage{fancyhdr}
\usepackage{subcaption}
\usepackage{tabularx}
\usepackage{booktabs} 
\usepackage{enumerate}
\usepackage{caption}
\usepackage{wrapfig}  
\usepackage{color}
\usepackage{numprint}
\usepackage{mathtools}  
\usepackage{multirow}
\usepackage{multicol}
\usepackage{eqnarray}
\usepackage{enumitem}
\usepackage{makecell}
\usepackage{xcolor}
\usepackage[titletoc]{appendix}

\usepackage[numbers]{natbib}
\usepackage{comment}

\usepackage{bbm} 
\usepackage{url}
\usepackage{color}
\usepackage{float}

\usepackage{hyperref}
\hypersetup{colorlinks=true, citecolor=blue, linkcolor=red, urlcolor=green}

\usepackage{xcolor}
\usepackage[section]{placeins}

\fancypagestyle{plain}{%
  \fancyhf{}
  \fancyfoot[C]{\iffloatpage{}{\thepage}}
  }
\theoremstyle{plain}
\newtheorem{theorem}{Theorem}[section]
\newtheorem{lemma}[theorem]{Lemma}
\newtheorem{corollary}[theorem]{Corollary}
\newtheorem{proposition}[theorem]{Proposition}
\newtheorem{conjecture}[theorem]{Conjecture}

\theoremstyle{definition}

\theoremstyle{remark}
\newtheorem*{remark}{Remark}

\newcommand{\fcfs}[0]{first-come, first-served~}
\newcommand{\Soj}[0]{\mathcal{S}}

\title{When Lines Become Contagious: Analyzing the Spread of Infection in the M/M/1 Queue}
\author{ Laurel Newman   \\ School of Operations Research and Information Engineering \\ Cornell
University \\ {lan66@cornell.edu}
\and 
Jamol Pender \footnote{Corresponding Author}
\\
School of Operations Research and Information Engineering
\\
Cornell University
\\
{jjp274@cornell.edu}
}

\usepackage[final]{graphicx}
\graphicspath{{/images/}}

\makeatletter
\newcommand{\sf@counterlist}{}
\makeatother
\begin{document}

\maketitle

\begin{abstract}
In this paper, we construct and analyze a stochastic model for infectious disease spread in service systems.  Our model considers a $M/M/1$ queue where a proportion, $p$, of the arriving population is infectious and the remaining population is susceptible.  When an infectious person overlaps in the queue with a susceptible customer for more than $\ell$ units of time, then the susceptible person becomes infected. We provide both a personalized perspective of how an individual susceptible customer is infected and a community perspective of how one infectious customer can infect the others around them.  From the personalized perspective, we derive the probability that a susceptible person becomes infected and the probability they are infected by at least $k$ different people.  From the community perspective we compute the mean and higher moments of the number of susceptible customers that are infected by one infectious individual.  Finally, we use stochastic simulation to verify our analytical results and show that our new formulas accurately describe the behavior our stochastic model of infection and to expand our model into the consideration of different arrival and service regimes.

\end{abstract}

\section{Introduction}\label{sec:intro}

Stochastic models serve as essential tools in applied probability, offering a rich framework to understand and analyze random phenomena inherent in various real-world systems. It has never been a more important time to leverage stochastic models in the context of infectious disease due to the Covid-19 pandemic.  One widely used stochastic model is the SEIR model, which tracks individuals' transitions between Susceptible, Exposed, Infectious, and Recovered states. It incorporates key parameters like transmission rate, incubation period, infectious period, and recovery rate to describe disease spread within a population. These models are invaluable for simulating various disease scenarios and assessing the impact of interventions, aiding policymakers and healthcare professionals in decision-making.

Unfortunately, deterministic compartmentalized models have dominated the Covid-19 literature. Recognizing the significance of stochastic infection models is vital for understanding virus transmission see for example \citet{dandekar2021safe, nguemdjo2020simulating, kaplan2020om}. These stochastic models offer insights into how infections spread, guiding interventions, see for example \citet{saavedra1988simple, allen2008introduction, zhang2013stochastic, drakopoulos2017network, derjany2020multiscale, palomo2020flattening, pang2020functional, forien2020epidemic, moein2021inefficiency, daw2022ephemerally}. Measures like air filter installations, transparent barriers, mask mandates, and physical distancing have been implemented to curb virus spread in service systems. 

Current stochastic models assume uniform social interactions across the entire population, neglecting variations due to factors like age, occupation, and time. Such heterogeneity can significantly affect virus dynamics. To combat this, many researchers employ simulations to explore these complexities, yielding practical insights. For example, various studies such as \citet{ying2021modelling, namilae2020multiscale, agnelli2023spatial, johansson2014utilizing, ronchi2020exposed, gosce2014analytical, abadeer2020simulating, rahn2022modelling} have simulated COVID-19 spread in diverse ways, but these simulations may lack mathematical rigor.

Additionally, in service systems, the entire population does not interact simultaneously, emphasizing the need for models accounting for customer overlap. Fortunately, there is a growing body of literature addressing overlap times in queueing models, enabling the modeling of infectious disease spread in retail settings, see for example \citet{palomo2021measuring, ko2022overlapping, xu2023queueing, palomo2021overlap, gao2023overlap, ko2023number, palomo2023maximum, boxma2024maximum, boxma2025overlap}.  Recent work by \citet{kang2021queueing} and \citet{palomo2023overlap} explores how one can calculate the overlap times of customers in multi-server and single server queues respectively.  More specifically, \citet{kang2021queueing} show how to use the overlaps to compute a new $R_0$ value for understanding infection rates in compartmentalized epidemic models and \citet{palomo2023overlap} prove that the overlap distribution is exponential for the M/M/1 queue and show via simulation that the overlap distribution depends on the service time distribution explicitly.  Our analysis is important because it can demonstrate, exactly, how much overlap occurs and can provide distributional information or prediction intervals for possible overlap.  Moreover, it can be used as a tool to prevent large overlaps and as a design tool to construct appropriate overlap by restricting the arrival rate or service distributions.  

Our model is similar to the novel work of \citet{kang2021queueing}, but we will highlight a few differences that motivate our analysis. The first difference is that we explicitly compute the probability a susceptible customer becomes infected in the system.  This is an important metric from a queueing and infectious disease perspective and provides a \emph{personalized} metric of infection.  We also compute the probability that a susceptible customer is infected by at least $k$ different people, which provides a novel way of measuring the severity of the illness when a susceptible is infected.  It is well known that the severity of an illness is dependent on the initial exposure to the virus \citet{memoli2015validation, liu2020viral}.  Thus, our analysis provides a way of not only determining who will get infected, but also how sick they may become.  

A second difference is that our methodology, which uses sample path arguments, allows one to compute higher moments of the number of infected customers.  In particular, the arguments used in \citet{kang2021queueing} only work for computing the mean number infected and our work provides a new formula for the variance, which is essential for producing prediction intervals for understanding community based infection. A third difference is that our model assumes a single, fixed threshold (which can be randomized later) which serves as a random variable for every susceptible customer (in the personalized perspective) rather than as a random variable for every susceptible-infectious pair, as the $R_0^{sys}$ formula of \citet{kang2021queueing}. Notably, this same formula in \citet{kang2021queueing} additionally assumes only a single infectious customer is ever in the system at a time, an assumption we do not make. Moreover, our expression for the mean number infected  highlights the dependence of the mean number infected on the Laplace transform of the threshold distribution.  Finally, our methodology of splitting the analysis into the upon arrival and after arrival settings provides a blueprint for other queueing models and even allows for transient and non-stationary analysis.  

{\color{black} Such single-server epidemic queueing models are especially relevant for bottleneck service systems such as small clinics, testing stations, retail counters, or security checkpoints, where all flow passes through one service channel. Because customers are processed sequentially, each additional arrival increases the waiting time of those behind them, thereby increasing overlap durations and infection probabilities. If the server can also become infected, additional feedback effects arise: reduced service availability lengthens queues, which in turn increases overlap times and transmission risk. Even in this simple single-server setting, the interaction between stochastic arrivals, service times, and overlap-based infection probabilities provides a useful framework for evaluating capacity decisions, arrival control policies, and protective interventions in congestion-sensitive environments.}

In this paper, we analyze the $M/M/1$ queue with two types of customers.  The customers are either infectious or susceptible. Customers arrive to the queue at rate $\lambda$ and customers are labeled as infectious with probability $p$ and the remaining proportion of customers $1-p$ are susceptible.  A susceptible customer becomes infected if they overlap with an infectious customer for longer than a pre-determined threshold $\ell > 0$. We assume that if you become infected in the queue, you cannot infect other susceptible customers who are still in the queue.  This model captures how people interact in retail or service settings and how infectious disease can spread in these systems.  In particular, our analysis emphasizes looking at the queue from susceptible and infectious customers' point of view. For an individual (or ``tagged'') susceptible customer's perspective, they do not want to be infected, thus they care about the probability that they will be infected while waiting for service. Note here the distinction between the proportion of infectious arrivals of the system, $p$, and the infection probability for a single tagged susceptible customer. A tagged infectious customer does not want to infect others.  Thus, they care about the number customers they will infect.  With these personalized and community based perspectives in mind, we provide the following contributions in this work.

\subsection{Contributions of the paper}
Our goal in this work is to answer the following questions
\begin{itemize}
    \item What proportion of the susceptible customers become infected?  
    \item How many infectious customers infect a tagged susceptible customer when they become infected?
    \item What are the moments of the number of customers an infectious customer infects?
    \item Is it more likely for a tagged susceptible customer to get infected from customers already in the queue upon their arrival or from a customer who comes after them?
    \item How does the infection probability or mean number of infections depend on the threshold distribution?
\end{itemize}

\subsection{Comparison to prior work}
For the reader's convenience, we explicitly highlight the distinctions between our work and the prior work by Kang et al.~\cite{kang2021queueing}.
\subsubsection{ Probability a single susceptible customer becomes infected}
This is not explored within \cite{kang2021queueing}. To summarize our work,
\begin{itemize}
    \item We explicitly calculate this over their entire sojourn in the system for the $M/M/1$ (Theorem~\ref{thm:total-infection-prob}), $D/M/1$, $E_2/M/1$, and $H_2/M/1$ systems (Theorem~\ref{thm:gm1infecprob} leveraging Equations~\ref{eqn:rho-deterministic}, \ref{eqn:rho-erlang}, and \ref{eqn:rho-hyperexp}).
    \item We further find a closed form solution for all $G/M/1$ systems, depending only on the Laplace transform of the inter-arrival distribution (Theorem~\ref{thm:gm1infecprob}).
    \item We approximate the complement of this probability for the $M/G/1$ case (Equation~\ref{eqn:mg1probapprox}).
    \item Finally, for the $M/M/1$ setting we additionally provide new distributional results for the upon-arrival and after-arrival perspectives, providing insight into the characterization of the infection dynamics (Theorems~\ref{thm:upon-arrival} and~\ref{thm:after-arrival}).
\end{itemize}
\subsubsection{Expected number of customers an infectious one infects}
From prior work in \cite{kang2021queueing},
\begin{itemize}[left=0pt..1em]
            \item For arbitrary pairwise distributed infection thresholds, Proposition 1 establishes an equation for this, with dependence on the overlap time and queue length distributions for any system \textit{assuming all other customers are susceptible}.
            \item Proposition 2 finds a closed form solution for the $M/M/1$ system, with exponentially distributed pairwise infection thresholds, \textit{assuming all other customers are susceptible}.
        \end{itemize}
In comparison, in our work we expand these to explicit formulations without assumptions on the rest of the customers in the system. In summary,
\begin{itemize}
        \item We explicitly calculate the expectation for the $M/M/1$ (Equation~\ref{eqn:meannuminfec}), $D/M/1$, $E_2/M/1$, and $H_2/M/1$ systems (Corollary~\ref{cor:gm1meaninfec} leveraging Equations~\ref{eqn:rho-deterministic}, \ref{eqn:rho-erlang}, and \ref{eqn:rho-hyperexp}).
        \item We further find a closed form solution for all $G/M/1$ systems, depending only on the Laplace transform of the inter-arrival distribution (Corollary~\ref{cor:gm1meaninfec}).
        \item Finally, for the $M/G/1$ case we bound the expectation (Equation~\ref{eqn:mg1expecbound}), and under the second condition of Conjecture~\ref{conj:symmetry}, we find a non-closed form equation for this, dependent only on the expectation of the sojourn time distribution (Corollary~\ref{conj:mg1meaninfec}).
\end{itemize}
\subsubsection{Variance of the number of customers infected by a single infectious one}
This is not explored within \cite{kang2021queueing}. To summarize our work,
\begin{itemize}
        \item We explicitly calculate the variance for the $M/M/1$ (Equation~\ref{eqn:varnuminfec}), $D/M/1$, $E_2/M/1$, and $H_2/M/1$ systems (Corollary~\ref{cor:gm1varinfec} leveraging Equations~\ref{eqn:rho-deterministic}, \ref{eqn:rho-erlang}, and \ref{eqn:rho-hyperexp}).
        \item We further find a closed form solution for all $G/M/1$ systems, depending only on the Laplace transform of the inter-arrival distribution (Corollary~\ref{cor:gm1varinfec}).
        \item Finally, for the $M/G/1$ case we find an approximation for this, based on the expectation and variance of the sojourn time distribution (Equation~\ref{eqn:mg1varinfec}).
\end{itemize}
\subsubsection{Higher moments of the number of customers infected by a single infectious one}
This is not explored within \cite{kang2021queueing}. In our work we calculate this explicitly for the $M/M/1$ system (Theorem~\ref{thm:moments-num-infect}).

\subsubsection{Number of times a susceptible customer gets infected (as a proxy for infection severity)}
This is not explored within \cite{kang2021queueing}. In our work we calculate this explicitly for the $M/M/1$ system (Theorems~\ref{thm:exactly-k-infections} and \ref{thm:at-least-k}).

\subsection{Notation of the paper}

We consider the M/M/1 queue from the perspective of a tagged customer arriving in steady state.

\begin{itemize}
    \item Let $Q$ be the queue length upon their arrival, $S$ the service time for the tagged customer, $W$ their wait time, $\mathcal{S}$ their sojourn time, and $\mathcal{R}$ their remaining sojourn time.
    \item Indexing from the first customer in the queue upon the tagged customer's arrival, let $S_i$ denote the service times for the $i^{th}$ customer, which are i.i.d.~exponential random variables with rate $\mu$. Let $R_i$ be the  $i^{th}$ customer's remaining service time (note if $i \neq 1$, $R_i = S_i$), $W_i$ their wait time, and $\mathcal{S}_i = S_i+W_i$ their sojourn time.
\end{itemize}

We define some additional notation for use in Section~\ref{sec:prob-infec}.
\begin{itemize}
    \item Let $I_{aa}$ be the event where the tagged customer is susceptible and becomes infected by a customer who arrives after them.  We define as $I_{ua}$ the event where the tagged customer is infected by a customer who was already in the queue upon their arrival. Note specifically that these events are not disjoint, and so define as $I = I_{ua}\cup I_{aa}$ the event the tagged customer gets infected. 
    \item For event $I_{aa}$, we are primarily interested in arrivals after the tagged customer. Let $M_I \coloneqq M_I(\mathcal{S})$, the number of new infectious customers who arrive while the tagged susceptible arrival is still in the system. The associated arrivals, in chronological order, are $A_1, \ldots A_{M_I}$, and we define the inter-arrival time between the $i^{th}$ and $(i+1)^{th}$ customers to be $A_{i+1} - A_i$, which are i.i.d.~exponential random variables with rate parameter $\lambda p$.
    \item For event $I_{ua}$, we are only interested in the customers already in the system upon the arrival of the tagged customer. That is, those who arrived before this customer but who have yet to complete service by this customer's arrival. To this end, we index these customers as $i=1,\ldots,Q$ in chronological order by arrival time (and let $I_{\max}$ be the maximum index amongst these of an infectious customer). Let $C \coloneqq \{c_1,\ldots,c_{Q}\}$ be the customers in the queue upon arrival, and let $C_I\subseteq C$ be the set of all infectious customers, so $c_{I_{max}} \in C_I$. We will additionally let $D_{I}$ denote the minimum distance the tagged customer has to an infectious customer, i.e., the distance to the closest infectious customer such that $Q+1 - D_I = I_{\max}$.
\end{itemize}

We also define some additional notation for Sections~\ref{sec:mult-infec} and specifically~\ref{sec:moments}. 
\begin{itemize}
    \item Consider the event $\mathbf{1}_{\tilde{\mathcal{R}_{j}} > \ell}$ to be the event such that  customer $j$ in the queue has remaining sojourn time at least $\ell$ and that they are additionally the \textit{first} customer in the queue with remaining sojourn time at least $\ell$.
    \item Let $N_{ua}$ denote the number of infection transmissions that occur upon arrival to the queue in steady state. In Section~\ref{sec:mult-infec}, our perspective is of a tagged susceptible customer who is infected by those already in the queue upon their arrival. In Section~\ref{sec:moments} our perspective is of a tagged infectious customer who infects those already in the queue upon their arrival. We similarly define as $N_{aa}$ the number of new infections that occur when the tagged customer is either infected by those who arrived after them (Section~\ref{sec:mult-infec}), or infects those who arrive after them (Section~\ref{sec:moments}). Similarly, let $N_{I}$ be the total number of new infections transmitted respectively to or by the tagged customer.
    \item For the case of $N_{aa}$, we are especially interested in the number of new ``opposite customer type'' arrivals that occur after the tagged customer. Let $N_S(x)$ denote the number of susceptible arrivals over a time period $x$ and $N_I(x)$ the infectious arrivals ($N_S(x)  + N_I(x) = N(x)$). Then the associated arrivals are $A_1, \ldots A_{N_S}$ or $A_1, \ldots A_{N_I}$, with inter-arrival time between the $i^{th}$ and $(i+1)^{th}$ customers $A_{i+1} - A_i$, which are i.i.d.~exponential random variables with rate $\lambda (1-p)$ or $\lambda p$, respectively.
\end{itemize}

\subsection{Organization of paper}
The remainder of the paper is organized as follows.  In Section \ref{sec:stoc_model}, we describe the stochastic model that we will analyze in this paper.  We also describe overlap times and how they are relevant to understanding infection spread. In Section \ref{sec:prob-infec}, we then analyze the \emph{personalized} perspective of the paper by deriving an expression for the probability that a customer is infected in queue. In Section \ref{sec:mult-infec}, we compute the probability for a tagged susceptible customer of being infected by at least $k$ arrivals, offering insight into a way of understanding infection severity. Then, in Section \ref{sec:moments}, we compute the higher moments of the number of infections caused by one infectious arrival.  This provides a \emph{community} perspective of the impact of one tagged infectious customer. We also establish the relationship between the personalized and community perspectives.  In Section \ref{numerics_infections}, we provide some numerical experiments that validate our analytical formulas for the probability of infection, the mean number of infections and the variance of the number of infections, and extend our analysis to the $G/M/1$ and $M/G/1$ systems.  Finally, in Section \ref{conclusion}, we conclude and provide new directions of interest for further analysis.

\section{Stochastic Infection Model} \label{sec:stoc_model}

In this section, we will describe the infection dynamics of our queueing model in detail. Our model is based on an $M/M/1$ queue with an initial empty queue length, arrival rate $\lambda$, and service rate $\mu$. We assume that $\mu > \lambda$ so that our system has a steady state distribution.  To study the spread of infection within the queue, we partition the arrival rate into two customer classes, infectious and susceptible, based on their likelihood of being infected. Specifically, customers arriving with a probability of $p \in (0,1)$ are considered infectious, while those with a probability of $1-p$ are susceptible. We will refer to $p$ as the proportion of infectious arrivals of the system.

To model the spread of infection within the queue, we adopt a simple approach based on an overlap threshold of $\ell \in \mathbb{R}_+$ minutes. Specifically, we consider a susceptible customer to become infected if they overlap with an infectious customer for more than $\ell$ minutes. Note that our model assigns an infection transmission threshold to each \textit{susceptible} customer, rather than specifying an independent threshold for every susceptible-infectious pair, as in \cite{kang2021queueing}. This modelling choice captures customer-specific susceptibility, while avoiding the pairwise threshold assumption adopted in \cite{kang2021queueing}. As a result, our model remains sufficiently adaptable to accommodate heterogeneous infection risk across individuals, while retaining a more parsimonious and analytically tractable framework. We make the assumption that once a customer is infected, they do not further spread the infection within the queue. This assumption is reasonable as many infectious diseases are not contagious upon initial infection. By simulating the progression of infection within the queue, we can gain insights into how the infection rate, arrival rate, service rate, and overlap threshold affect the dynamics of the infection. Our model provides a powerful tool for understanding how infectious diseases spread in service systems.

\subsection{Overlap time distribution}


In this section, we consider the M/M/1 queue starting empty at time 0.  Let $A_n$ be the arrival time of the $n^{th}$ customer, $S_n$ the service time of the $n^{th}$ customer, and $W_n$ the waiting time of the $n^{th}$ customer. Thus, the departure time for the $n^{th}$ customer is given by the following equation
\begin{eqnarray*}
D_n &=& S_{n} + A_{n} + W_{n},
\end{eqnarray*}
Given the arrival time, departure time and waiting time for each customer, it is possible to describe the overlap time between any pair of customers.  The overlap time is defined as the amount of time that both customers ``overlap'' in the queue simultaneously.  Thus, the overlap time between the $n^{th}$ and $(n+k)^{th}$ customers in the single server queue is given by
\begin{eqnarray*}
O_{n,n+k} &=& \left( D_n - A_{n+k} \right)^+ \\ 
&=& \left( S_{n} + A_{n} + W_{n} - A_{n+k} \right)^+ \\
&=&  \left( S_{n} + W_{n} - ( A_{n+k} - A_{n} ) \right)^+ .
\end{eqnarray*}  
It's essential to highlight the distinction between the overlap time equation and the infinite server equation discussed by \citet{palomo2023overlap}. The reason for this distinction lies in the inherent ordering of a single server queue, which follows a \fcfs principle. This sequencing allows for a clear arrangement of departures based on their respective arrival times. Now that we possess a specific expression to calculate the overlap time between two customers, our subsequent discussion will elucidate how this information can be harnessed to compute infection probabilities for customers in the queue who are susceptible.


\section{Susceptible Customers' Probability of Infection}\label{sec:prob-infec}
In this section we explore the probability that a susceptible customer becomes infected while in the queueing system.

\subsection{Probability of infection from customer in queue upon arrival}

In this section, we compute the probability that a tagged susceptible customer will contract the infection from someone who is already in the queue upon their arrival.  This serves as a crucial initial step in our goal of computing the probability susceptible customers get infected in the system.  However, before we proceed to derive the probability of a tagged susceptible customer acquiring the infection from someone already in the queue at the moment of their arrival, we must first compute an important result that quantifies the smallest distance between an infectious customer and the arriving susceptible individual, given that $k$ customers are in the queue upon arrival.  The forthcoming lemma derives this probability.  

\begin{lemma}\label{lem:max_infec}
Suppose there are $k$ customers in the queue before the arrival of a tagged susceptible customer and $j \geq 1$ of them are infectious.  Then, the probability that the distance between the tagged arriving customer and the closest infectious customer is $j+i$ is given by
\begin{eqnarray*}
\mathbb{P} \left( k+1-D_{I} = j+i |Q=k, |C_I| = j\right) &=& \frac{\binom{j+i-1}{j-1}}{\binom{k}{j}}.
\end{eqnarray*}
\begin{proof}
 Suppose the tagged customer is susceptible, and when they arrive there are $k$ in the queue customers (so they are customer $k+1$), and $j$ of these customers are infectious. Then note that the number of (susceptible) customers between the tagged customer and the closest infectious customer (by definition customer $c_{I_{\max}}$) is $D_I$, such that we are finding the probability that $I_{\max} = j+i$. This requires that there are $j-1$ \textit{other} infectious customers than the closest one, who must all occupy positions between 1 and $j+i-1$ (inclusive) in the queue. It immediately follows that there are $j+i-1 \choose j-1$ possible ways these customers may be arranged in the queue.  Finally, we compute the denominator by using the Hockey Stick identity and summing the possible numerators.  This completes the proof.
\end{proof}
\end{lemma}

Lemma \ref{lem:max_infec}, provides the distribution for the distance between an arriving susceptible customer and the closest infectious customer.  This result is useful since it will allow us to compute the probability that the $j^{th}$ person waiting in a queue of length $k$ is the last infectious customer in the queue. The following lemma provides this probability.   

\begin{lemma}\label{lem:cond-max-infec} For an $M/M/1$ queue with queue length $k$, the probability for all $1 \leq j \leq k$ that the $j^{th}$ person is the last infectious customer in the queue is equal to
\begin{eqnarray*}
    \mathbb{P}(I_{\max} = j| Q = k) = p(1-p)^{k-j}.
\end{eqnarray*}
\begin{proof}

For $1 \leq m \leq k$, consider a scenario where a tagged susceptible customer arrives to a queue with $k$ customers, where $m$ of them are infectious. We designate customer $c_j$ as the closest infectious individual to the newly arrived susceptible customer, for $j \leq k$. Utilizing the pigeonhole principle, it's evident that we require $m \leq j$.

If we define $i = j - m$, then we can assert that the closest infectious customer is $c_{m+i}$.  Think of $i$ as a measure of this closest infectious customer's ``closeness." As $m$ represents the furthest spot in the queue from the tagged customer that the last infectious customer in the queue could occupy, $i$ quantifies the last infectious customer's additional proximity to the arriving susceptible customer.
Now, using Lemma~\ref{lem:max_infec}, we see that then \begin{eqnarray*}
    \mathbb{P}(I_{\max} = j| Q = k, |C_I| = m) &=& \mathbb{P}(I_{\max} = m+i| Q = k, |C_I| = m)\\ 
    &=& \mathbb{P}(k+1-D_I = m+i| Q = k, |C_I| = m)\\    &=& \frac{{m+i-1 \choose m-1}}{{k \choose m}}= \frac{{j-1 \choose m-1}}{{k \choose m}}.
\end{eqnarray*}

    Using the above observation,
    \begin{align*}
        \mathbb{P}(I_{\max} = j| Q = k)
        &= \sum_{m=1}^j\mathbb{P}(I_{\max} = j| Q = k, |C_I| = m)\mathbb{P}(|C_I|=m)\\
        &= \sum_{m=1}^j \frac{{j-1 \choose m-1}}{{k \choose m}}\cdot p^m(1-p)^{k-m}{k \choose m}\\
        &= \sum_{m=1}^j {j-1 \choose m-1}p^m(1-p)^{k-m}\\
        &= p(1-p)^{k-j}.
    \end{align*}
This completes the proof.
\end{proof}
\end{lemma}

Our final piece is to find the probability of the tagged susceptible customer getting infected by customers already in the queue upon their arrival given the queue length is $k$ and the index of the last infectious customer is equal to $j$.  This result uses Lemmas \ref{lem:max_infec} and \ref{lem:cond-max-infec} and is given below.  

\begin{lemma}\label{lem:cond-upon-infect} Consider a tagged susceptible arrival to the $M/M/1$ queue in steady state with queue length $k$, where for any $1\leq j \leq k$ the $j^{th}$ person waiting in the queue is the last infectious customer in the queue. The probability that the tagged customer gets infected during their time in the system is equal to
$$\mathbb{P}(I_{ua}|I_{\max} = j, Q = k) =\sum_{n=0}^{j-1} e^{-\mu l} \frac{(\mu l)^n}{n!}.$$
\end{lemma}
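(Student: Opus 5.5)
The plan is to reduce the event $I_{ua}$, given $I_{\max}=j$ and $Q=k$, to a statement about customer $j$'s remaining sojourn time. Then I will use the memoryless property of exponential service times to identify that remaining sojourn time as an Erlang random variable.

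\textbf{Step 1 (reduction to the last infectious customer).} Under first-come, first-served, every customer $c_i$ with $i\le j$ departs before $c_j$. So the overlap of the tagged customer with any infectious $c_i$, $i<j$, is at most its overlap with $c_j$. The overlap with any customer in the queue upon arrival equals that customer's remaining sojourn time, since the tagged customer leaves last. Customers $c_{j+1},\dots,c_k$ are susceptible by definition of $I_{\max}$. Hence
\[
I_{ua}\cap\{I_{\max}=j,Q=k\} \;=\; \{\mathcal{R}_j>\ell\}\cap\{I_{\max}=j,Q=k\},
\qquad \mathcal{R}_j = R_1 + S_2+\cdots+S_j .
\]
Here $\mathcal{R}_j$ is the remaining sojourn time of $c_j$ measured from the tagged arrival. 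I would also remark that infections of intermediate customers are irrelevant: infected customers do not transmit, and only the originally infectious $c_j$ matters.

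\textbf{Step 2 (distribution of $\mathcal{R}_j$).} By memorylessness, $R_1\sim\mathrm{Exp}(\mu)$, and $R_1,S_2,\dots,S_j$ are i.i.d.\ $\mathrm{Exp}(\mu)$. They are independent of $Q$ and of the customer types, since types are assigned by independent coin flips and service times are independent of the arrival process. Thus, conditionally on $\{I_{\max}=j,Q=k\}$, $\mathcal{R}_j\sim\mathrm{Erlang}(j,\mu)$.

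\textbf{Step 3 (tail formula).} The tail of the Erlang distribution is a Poisson probability. $\mathcal{R}_j>\ell$ holds exactly when a Poisson process of rate $\mu$ has fewer than $j$ points in $[0,\ell]$. This gives
\[
\mathbb{P}(\mathcal{R}_j>\ell)=\sum_{n=0}^{j-1}e^{-\mu\ell}\frac{(\mu\ell)^n}{n!},
\]
which is the claim.

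The main obstacle is Step 1: justifying that only $c_j$ matters, and that the condition is the strict inequality ``more than $\ell$.'' The key observation is the first-come, first-served monotonicity $D_1<D_2<\cdots<D_k$, so the overlaps with $c_1,\dots,c_j$ are nested. Step 2's independence claim is routine, but it should be stated explicitly in the PASTA / steady-state framing.
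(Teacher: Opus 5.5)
Your proposal is correct and takes essentially the same route as the paper. The paper writes $I_{ua}$ as the joint event that both the tagged customer's sojourn time and the largest remaining sojourn time among infectious customers ahead are at least $\ell$, drops the first condition as implied by the second, reduces the maximum to $\sum_{m=1}^{j}R_m$ by monotonicity of partial sums, removes the conditioning by independence, and reads off the Erlang$(j,\mu)$ tail; your overlap-based phrasing of Step 1 is just a more explicit version of that reduction, and strict versus non-strict inequality is immaterial here.
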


\begin{proof}
We begin by noting that for an infection to occur we require both (a) that the tagged customer has sojourn time $(\mathcal{S} = S+W)$ at least $\ell$ and (b) that some infectious customer is also in the system for at least $\ell$. 
    \begin{align*}
\mathbb{P}(I_{ua}|I_{\max} = j, Q = k) 
&= \mathbb{P}\left(S+W \geq l, \max_{i \in C_I}\sum_{m=1}^{i} R_m \geq l|I_{\max} = j, Q = k\right)\\ 
&= \mathbb{P}\left(S+\sum_{m=1}^{k}R_m \geq l, \max_{i \in C_I}\sum_{m=1}^{i} R_m \geq l|I_{\max} = j, Q= k\right) .
\intertext{Since $i \leq k, \forall i \in I$, and $S \geq 0$,}
&= \mathbb{P}\left(\max_{i \in C_I}\sum_{m=1}^{i} R_m \geq l|I_{\max} = j, Q = k\right)\\ 
&= \mathbb{P}\left(\sum_{m=1}^{j} R_m \geq l|I_{\max} = j, Q = k\right)\\ 
&= \mathbb{P}\left(\sum_{m=1}^{j} R_m \geq l\right)\\ 
&=\sum_{n=0}^{j-1} e^{-\mu l} \frac{(\mu l)^n}{n!}.
\end{align*}
\end{proof}

With all these pieces we can find the probability a susceptible customer is infected upon arrival. Since we have the conditional probability of a tagged susceptible customer getting infected, we can sum over the conditions: the distribution of the queue length and the index of the last infectious customer.  This is given below.

\begin{theorem}\label{thm:upon-arrival}
Let a susceptible customer join the $M/M/1$ queue in steady state, then the probability the tagged customer gets infected by a customer who is in queue upon their arrival is equal to
\begin{equation*}
\mathbb{P}\left( I_{ua} \right) = e^{-(\mu-\lambda) \ell} \frac{p \lambda}{\mu-\lambda(1-p)} .
\end{equation*}
\begin{proof}
\begin{align*}
\mathbb{P}(I_{ua}) &= \sum_{k=0}^{\infty} \mathbb{P}(Q =k) \cdot \mathbb{P}(I_{ua} |Q = k)\\
&= \sum_{k=1}^{\infty}\mathbb{P} \left(Q = k \right) \cdot \mathbb{P}\left(I_{ua} |Q = k\right)\\
&= \sum_{k=1}^{\infty}\mathbb{P}\left(Q = k \right) \cdot \sum_{j=1}^{k}\mathbb{P}\left(I_{ua} | I_{\max} = j, Q = k)\mathbb{P}(I_{\max} = j| Q = k \right).
\intertext{By Lemmas~\ref{lem:cond-max-infec} and~\ref{lem:cond-upon-infect}, respectively, we recognize the above expression to be}
&= e^{-\mu \ell}\left(1-\frac{\lambda}{\mu}\right)\sum_{k=1}^{\infty}\left(\frac{\lambda}{\mu}\right)^k\sum_{j=1}^{k}p(1-p)^{k-j}\sum_{n=0}^{j-1}\frac{(\mu \ell)^n}{n!}\\
&= e^{-\mu \ell} \left(1-\frac{\lambda}{\mu}\right)\sum_{n=0}^{\infty}\frac{(\mu \ell)^n}{n!} \sum_{k=n+1}^{\infty}\left(\frac{\lambda}{\mu} \right)^k\sum_{j=n+1}^{k}p(1-p)^{k-j} \\
&= e^{-\mu \ell}\left(1-\frac{\lambda}{\mu}\right)\sum_{n=0}^{\infty}\frac{(\mu \ell)^n}{n!} \sum_{k=n+1}^{\infty}\left(\frac{\lambda}{\mu}\right)^k\left(1-(1-p)^{k-n}\right)\\
&= e^{-\mu \ell} \left(1-\frac{\lambda}{\mu}\right)\sum_{n=0}^{\infty}\frac{(\mu \ell)^n}{n!}\frac{p\left(\frac{\lambda}{\mu}\right)^{n+1}}{(1-\frac{\lambda}{\mu})(p\frac{\lambda}{\mu}-\frac{\lambda}{\mu}+1)} \\
&= e^{-\mu \ell}\left(1-\frac{\lambda}{\mu}\right)\frac{p\lambda\mu}{(\mu -\lambda)(p\lambda-\lambda+\mu)}\sum_{n=0}^{\infty}\frac{(\lambda \ell)^n}{n!} \\
&= e^{-(\mu-\lambda) \ell}\frac{p\lambda}{\mu-\lambda(1-p)} .
\end{align*}
\end{proof}
\end{theorem}

We observe in Theorem~\ref{thm:upon-arrival} that the probability for a susceptible customer of getting infected upon arrival is equal to the product of two probabilities. 

The first probability measures if the tagged customer's sojourn time is larger than $\ell$ i.e. $\mathbb{P}(\mathcal{S} > \ell) = e^{-(\mu-\lambda) \ell} $.  This is necessary as if their sojourn time is not longer than $\ell$, then it is impossible for them to get infected. The second probability is $\mathbb{P}(\mathcal{S} > \text{Arrival of infectious customer} ) = \frac{\lambda p}{(\mu-\lambda) + \lambda p} $.

\subsection{Probability of infection by customer who arrives after}
In this section, we compute the probability that a susceptible customer gets infected by someone who arrives to the queue after their arrival. This is the second important piece in computing the probability of getting infected while in the queue. And specifically note at this point that we are \textit{not} restricting our consideration only to those susceptible customers who were not already infected upon arrival. That is, recall that these two events are not mutually exclusive. 

In computing this probability, we find it easier to compute its complement, that is the likelihood that a tagged susceptible customer will not overlap with an infectious customer who arrives after them long enough to get infected. In doing so, we begin by recognizing that if the first infectious customer after the tagged susceptible customer arrives while the tagged customer has less than $\ell$ of their sojourn remaining, the tagged customer will not get infected. The following lemma computes this useful probability.

\begin{lemma}
\label{lem:integral-prob}
 Consider a susceptible customer arriving to an $M/M/1$ queue in steady state. If $k$ infectious customers arrive while the tagged customer is in the system (that is to say, within the interval $[0,x]$), the probability that the first arrives while the tagged customer has less than $\ell$ sojourn remaining is equal to
    $$\mathbb{P}(A_1>x-\ell|M_I=k) = \left(\frac{\ell}{x}\right)^k.$$
\end{lemma}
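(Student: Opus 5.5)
We read the statement as conditioning on the tagged customer's sojourn time being $\mathcal{S}=x$ with $x\ge \ell$; the case $x<\ell$ is trivial, since then the event holds surely. The plan rests on the order-statistics property of the Poisson process. Infectious customers arrive according to a Poisson process of rate $\lambda p$, obtained by thinning the arrivals of rate $\lambda$. Arrivals after the tagged customer do not affect the tagged customer's sojourn time, because service is first-come, first-served. So, conditional on $\mathcal{S}=x$, the infectious arrival process on $[0,x]$ is still a Poisson process, independent of $x$. Given $M_I=k$, the unordered arrival epochs are then i.i.d.\ uniform on $[0,x]$, and $A_1,\ldots,A_k$ are their order statistics.

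The key step is the observation that the earliest arrival $A_1$ exceeds $x-\ell$ exactly when all $k$ uniform points lie in the subinterval $(x-\ell,x]$. By independence this event has probability $\left(\ell/x\right)^k$. Equivalently, one can use the density of the minimum, $k(x-t)^{k-1}/x^k$ on $[0,x]$, and integrate from $x-\ell$ to $x$ to get $\left(\ell/x\right)^k$.

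The only real obstacle is justifying that conditioning on the sojourn time does not distort the uniformity of the subsequent infectious arrivals. The argument is that $\mathcal{S}=W+S$ is determined by the queue at arrival and the service times of the tagged customer and those ahead of them. Both are independent of the Poisson arrivals after time $0$, by the independent-increments property together with FCFS. I would state this explicitly and then apply the standard conditional uniformity result. Finally, I would note that for $k=0$ the formula gives $1$, consistent with the convention that no infectious arrival means no infection from behind.
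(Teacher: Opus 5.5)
Your proposal is correct and follows essentially the same route as the paper: condition on $M_I=k$ so the infectious arrival epochs are i.i.d.\ uniform on $[0,x]$, then compute the tail of the minimum as $(\ell/x)^k$. The paper obtains this as the complement of a binomial sum for $\mathbb{P}(A_1\le x-\ell)$, while you read it off directly as the event that all $k$ points lie in $(x-\ell,x]$. Your explicit FCFS/independence justification for conditioning on $\mathcal{S}=x$, and your restriction to $x\ge \ell$, are points the paper leaves implicit.
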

\begin{proof}
        We find it easier to calculate the converse, $$\mathbb{P}(A_1>x-\ell|M_I=k) =1 - \mathbb{P}(A_1 \leq x- \ell | M_I = k).$$ By conditioning on $M_I = k$, we now have that the arrival times are distributed uniformly on $[0,x]$. We use the distribution of the CDF for the minimum order statistic of $X \sim U(0,x)$, which is 
        \begin{eqnarray*}
            \sum_{i=1}^k{k \choose i}F_X(x-\ell)^i(1-F_X(x-\ell))^{k-i} &=& \sum_{i=1}^k{k \choose i}\left(\frac{x-\ell}{x}\right)^i\left(\frac{\ell}{x}\right)^{k-i} \\
            &=& 1-\left(\frac{\ell}{x}\right)^k.
             \end{eqnarray*}
It is worth confirming that when $k=0$, that is there are no infectious arrivals within a time window of $x$ after the tagged customer arrived, then $\mathbb{P}(A_1>x-\ell)=1-\left(1-\left(\frac{\ell}{x}\right)^0\right) = 1$.
\end{proof}

We can now consider the conditional probability of the tagged susceptible customer avoiding getting infected by customers who arrive to the queue after their arrival, given a queue length of $k$ upon their arrival. The result is given below.

\begin{lemma}
 \label{lem:splitCompQ}
 Consider a susceptible arrival to an $M/M/1$ queue in steady state when the queue length is $k$. We can split the complement of the after arrival infection probability into two integrals, specifically,
\begin{align*}
\mathbb{P}(I_{aa}^c|Q=k)
=& \int_0^{\ell}\frac{\mu^{k+1}x^ke^{-\mu x}}{k!}\sum_{j=0}^{\infty}\frac{
 e^{-\lambda p x } (\lambda p x )^j }{j!}
 dx\\
&+\int_{\ell}^{\infty}\frac{\mu^{k+1}x^ke^{-\mu x}}{k!}\sum_{j=0}^{\infty}\left(\frac{\ell}{x}\right)^j\frac{
 e^{-\lambda p x } (\lambda p x )^j }{j!}dx.
\end{align*}
\end{lemma}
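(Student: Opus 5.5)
We condition on the tagged customer's sojourn time and then on the number of infectious customers who arrive during it. Given $Q=k$ in the $M/M/1$ queue, the sojourn time is $\mathcal{S}=R_1+S_2+\cdots+S_k+S$, a sum of $k+1$ i.i.d.\ exponential($\mu$) variables by memorylessness. It is therefore Erlang$(k+1,\mu)$ with density $\frac{\mu^{k+1}x^k e^{-\mu x}}{k!}$, and this is the common weight in both integrals.

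Next we condition on $\mathcal{S}=x$. Infectious arrivals form a thinned Poisson process of rate $\lambda p$ that is independent of the customers already present and of the tagged customer's service. Under FCFS, later arrivals do not affect $\mathcal{S}$. Hence $M_I=M_I(x)$ is Poisson$(\lambda p x)$, with mass $e^{-\lambda p x}(\lambda p x)^j/j!$. By the law of total probability,
\begin{align*}
\mathbb{P}(I_{aa}^c\mid Q=k)=\int_0^\infty \frac{\mu^{k+1}x^k e^{-\mu x}}{k!}\sum_{j=0}^\infty \mathbb{P}(I_{aa}^c\mid \mathcal{S}=x, M_I=j, Q=k)\,\frac{e^{-\lambda p x}(\lambda p x)^j}{j!}\,dx .
\end{align*}

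It remains to identify the conditional probability and split the integral at $x=\ell$. A later infectious arrival at time $A_i$ overlaps the tagged customer for exactly $x-A_i$, since it departs after the tagged customer. The tagged customer therefore escapes infection from later arrivals iff every $A_i>x-\ell$, which is equivalent to $A_1>x-\ell$. For $x<\ell$ this holds for every arrival, so the conditional probability is $1$ and the first integral appears. For $x\ge \ell$, Lemma~\ref{lem:integral-prob} gives $(\ell/x)^j$, and the second integral appears. The boundary event $\mathcal{S}=\ell$ has probability zero, so strict versus non-strict threshold conventions are irrelevant.

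The main point requiring care is the independence claim: conditioning on $Q=k$ and on $\mathcal{S}=x$ must leave the infectious arrival process on $[0,x]$ Poisson. This needs the service times of later customers to play no role, which FCFS ensures, and it needs the PASTA-type independence of future arrivals from the present state. I would state these facts explicitly before invoking Lemma~\ref{lem:integral-prob}. The first integral's inner sum equals $1$, but I would leave it unsimplified to match the statement.
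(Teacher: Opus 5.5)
Your proposal is correct and follows the paper's proof essentially step for step. Both arguments condition on the Erlang$(k+1,\mu)$ sojourn time given $Q=k$ and on the Poisson$(\lambda p x)$ count $M_I$, reduce non-infection to $\{x<\ell\}\cup\{A_1>x-\ell\}$, split the integral at $\ell$, and invoke Lemma~\ref{lem:integral-prob}. Your explicit justifications go slightly beyond the paper: independence of the future infectious arrivals from $Q$ and $\mathcal{S}$, and the fact that under FCFS the overlap equals exactly $x-A_i$. These make precise what the paper leaves implicit.
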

\begin{proof}
We first recognize that the infection probability depends on the number of infectious customers who overlap with the tagged susceptible customer, as in the upon arrival case, and this number of infection-transmitting after arrival overlaps is dependent on the tagged customer's total time in the system, so we find that 
\begin{align*}
   \lefteqn{ \mathbb{P}(I_{aa}^c|Q=k) } \\
    =& \int_0^{\infty}\mathbb{P}(\mathcal{S}= x|Q=k)\sum_{j=0}^{\infty}\mathbb{P}(I_{aa}^c|M_I = j,Q=k,\mathcal{S}=x)\mathbb{P}(M_I=j|\mathcal{S}=x,Q=k)dx.\intertext{Since the queue is single-server, the tagged customer does \textit{not} get infected by an arrival after them only if either (a) they are not in the system long enough to get infected, or (b) the first infectious arrival after them occurs too late to infect them (thereby meaning \textit{all} infectious customers arriving after the tagged customer arrive too late to infect the tagged customer).}
    =& \int_0^{\infty}\mathbb{P}(\mathcal{S} = x|Q=k)\sum_{j=0}^{\infty}\mathbb{P}(A_{1}>x-l\cup x < \ell|M_I=j)\mathbb{P}(M_I=j|\mathcal{S}=x)dx.\intertext{The probability that the susceptible customer does not get infected is trivially 1 if the customer's sojourn time is less than $\ell$ minutes. So,}
    =& \int_0^{\ell}\mathbb{P}(S = x|Q=k)\sum_{j=0}^{\infty}\mathbb{P}(M_I=j|S=x,Q=k)dx\\
    &+\int_{\ell}^{\infty}\mathbb{P}(S = x|Q=k)\sum_{j=0}^{\infty}\mathbb{P}(A_1>x-l|M_I=j)\mathbb{P}(M_I=j|S=x,Q=k)dx\\
 =& \int_0^{\ell}\frac{\mu^{k+1}x^ke^{-\mu x}}{k!}\sum_{j=0}^{\infty}\frac{
 e^{-\lambda p x } (\lambda p x )^j }{j!}
 dx\\
&+\int_{\ell}^{\infty}\frac{\mu^{k+1}x^ke^{-\mu x}}{k!}\sum_{j=0}^{\infty}\mathbb{P}(A_1>x-l|M_I=j)\frac{
 e^{-\lambda p x } (\lambda p x )^j }{j!}dx.\intertext{And by Lemma~\ref{lem:integral-prob}, this is}
 =& \int_0^{\ell}\frac{\mu^{k+1}x^ke^{-\mu x}}{k!}\sum_{j=0}^{\infty}\frac{
 e^{-\lambda p x } (\lambda p x )^j }{j!}
 dx + \int_{\ell}^{\infty}\frac{\mu^{k+1}x^ke^{-\mu x}}{k!}\sum_{j=0}^{\infty}\left(\frac{\ell}{x}\right)^j\frac{
 e^{-\lambda p x } (\lambda p x )^j }{j!}dx.
    \end{align*}
\end{proof}

In completing our result for the unconditional after arrival infection probability, we encounter two large computations. For the sake of clarity, we separately solve these computations, allowing us to cleanly substitute them in the final proof. 

\begin{lemma}\label{lem:serviceShort}
The following identity holds for all $\lambda < \mu$:
    $$\left(\frac{\mu-\lambda}{\mu}\right)\sum_{k=0}^{\infty} \left(\frac{\lambda}{\mu}\right)^k\int_0^{\ell}\frac{\mu^{k+1}x^ke^{-\mu x}}{k!}\sum_{j=0}^{\infty}\frac{
 e^{-\lambda p x } (\lambda p x )^j }{j!}
 dx=1-e^{-(\mu-\lambda) \ell}.$$
\end{lemma}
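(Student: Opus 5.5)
The inner Poisson sum is a full Poisson probability mass summed over all $j \ge 0$, so it equals $1$ for every $x$. Using $\sum_{j=0}^{\infty} e^{-\lambda p x}(\lambda p x)^j/j! = 1$, the left-hand side reduces to
\begin{equation*}
\left(\frac{\mu-\lambda}{\mu}\right)\sum_{k=0}^{\infty}\left(\frac{\lambda}{\mu}\right)^k\int_0^{\ell}\frac{\mu^{k+1}x^ke^{-\mu x}}{k!}\,dx .
\end{equation*}
This is just $\mathbb{P}(\mathcal{S}\le \ell)$. The geometric weights are the steady-state law of $Q$, and the Erlang$(k+1,\mu)$ density is the conditional law of the sojourn time given $Q=k$. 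The claim therefore amounts to the classical fact that the $M/M/1$ sojourn time is exponential with rate $\mu-\lambda$, which I will verify directly.

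First I interchange the sum over $k$ with the integral. This is justified by Tonelli, since every term is nonnegative. The integrand then becomes
\begin{equation*}
(\mu-\lambda)\,e^{-\mu x}\sum_{k=0}^{\infty}\frac{(\lambda x)^k}{k!} = (\mu-\lambda)\,e^{-\mu x}e^{\lambda x} = (\mu-\lambda)\,e^{-(\mu-\lambda)x}.
\end{equation*}
Integrating over $[0,\ell]$ gives $1-e^{-(\mu-\lambda)\ell}$, which is the claimed identity.

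There is no real obstacle. The only points requiring care are recognizing that the $j$-sum collapses to $1$, since the factor $(\ell/x)^j$ is absent here, unlike in the second integral of Lemma~\ref{lem:splitCompQ}, and justifying the interchange of sum and integral, which Tonelli handles because all terms are nonnegative. The powers of $\mu$ cancel cleanly: $\mu^{k+1}(\lambda/\mu)^k/\mu = \lambda^k$.
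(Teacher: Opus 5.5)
Your proof is correct and follows the paper's argument. The paper likewise interchanges the sum and the integral, evaluates the $k$-sum as $e^{\lambda x}$ and the $j$-sum as $e^{\lambda p x}$ (which cancels $e^{-\lambda p x}$, matching your observation that the Poisson masses sum to $1$), and integrates $(\mu-\lambda)e^{-(\mu-\lambda)x}$ over $[0,\ell]$. Your Tonelli justification and the identification of the left-hand side with $\mathbb{P}(\mathcal{S}\le \ell)$ are points the paper leaves implicit.
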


\begin{proof}
        \begin{align*}
    &\left(\frac{\mu-\lambda}{\mu}\right)\sum_{k=0}^{\infty} \left(\frac{\lambda}{\mu}\right)^k\int_0^{\ell}\frac{\mu^{k+1}x^ke^{-\mu x}}{k!}\sum_{j=0}^{\infty}\frac{e^{-\lambda p x } (\lambda p x )^j }{j!}
dx\\
    &= \left(\mu-\lambda\right)\int_0^{\ell}e^{-(\mu+\lambda p) x}\sum_{k=0}^{\infty} \frac{(\lambda x)^k}{k!}\sum_{j=0}^{\infty}\frac{ (\lambda p x )^j }{j!}dx\\
    &= \left(\mu-\lambda\right)\int_0^{\ell}e^{-(\mu+\lambda p) x}e^{\lambda x}e^{\lambda p x}dx\\
    &= \left(\mu-\lambda\right)\int_0^{\ell}e^{-(\mu-\lambda) x}dx\\
    &= 1-e^{-(\mu-\lambda) \ell}.
    \end{align*}
\end{proof}

\begin{lemma}\label{lem:serviceLong}
The following identity holds for all $\lambda < \mu$:
    \begin{align*}
        &\left(\frac{\mu-\lambda}{\mu}\right)\sum_{k=0}^{\infty} \left(\frac{\lambda}{\mu}\right)^k\int_{\ell}^{\infty}\frac{\mu^{k+1}x^ke^{-\mu x}}{k!}\sum_{j=0}^{\infty}\left(\frac{\ell}{x}\right)^j\frac{e^{-\lambda p x } (\lambda p x )^j }{j!}dx=\frac{\mu-\lambda}{\mu-(1-p)\lambda}e^{-(\mu-\lambda) \ell}.
    \end{align*}
\end{lemma}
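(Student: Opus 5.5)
Following the proof of Lemma~\ref{lem:serviceShort}, I will first collapse both infinite sums into exponentials and then evaluate a single elementary integral.

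First, I interchange the sum over $k$ with the integral; Tonelli's theorem allows this because every term is nonnegative. The prefactor combines with the Erlang density as
$$\left(\tfrac{\mu-\lambda}{\mu}\right)\left(\tfrac{\lambda}{\mu}\right)^k\tfrac{\mu^{k+1}x^k}{k!}=(\mu-\lambda)\tfrac{(\lambda x)^k}{k!}.$$
Summing over $k$ then gives $(\mu-\lambda)e^{\lambda x}e^{-\mu x}=(\mu-\lambda)e^{-(\mu-\lambda)x}$. This is just the statement that the steady-state sojourn time is exponential with rate $\mu-\lambda$.

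Second, I handle the inner sum over $j$. The factor $(\ell/x)^j$ cancels the $x^j$ in $(\lambda p x)^j$, so
$$\sum_{j\ge 0}\left(\tfrac{\ell}{x}\right)^j\frac{e^{-\lambda p x}(\lambda p x)^j}{j!}=e^{-\lambda p x}\sum_{j\ge0}\frac{(\lambda p\ell)^j}{j!}=e^{-\lambda p x}e^{\lambda p \ell}.$$
This is the probability generating function of the Poisson count $M_I$ evaluated at $\ell/x$, which matches Lemma~\ref{lem:integral-prob}. This cancellation is the one step that needs care. The sum becomes a pure exponential only because $(\ell/x)^j$ removes the $x$-dependence from $(\lambda p x)^j$, leaving $e^{-\lambda p x}$ outside.

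Third, the left-hand side has now been reduced to
$$(\mu-\lambda)e^{\lambda p\ell}\int_\ell^\infty e^{-(\mu-\lambda+\lambda p)x}\,dx=(\mu-\lambda)e^{\lambda p\ell}\frac{e^{-(\mu-\lambda+\lambda p)\ell}}{\mu-\lambda+\lambda p}.$$
The integral converges because $\mu-\lambda+\lambda p>0$ when $\lambda<\mu$. The factors $e^{\pm\lambda p\ell}$ cancel, and $\mu-\lambda+\lambda p=\mu-(1-p)\lambda$. The expression is therefore $\frac{\mu-\lambda}{\mu-(1-p)\lambda}e^{-(\mu-\lambda)\ell}$, as claimed.

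I do not expect any step to be a genuine obstacle. The only point requiring justification is the two interchanges of sum and integral, and nonnegativity of the terms settles both.
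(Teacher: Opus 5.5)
Your proposal is correct and follows the paper's proof essentially step for step. Both collapse the $k$-sum to $e^{\lambda x}$ and the $j$-sum to $e^{\lambda p\ell}$, then evaluate the single exponential integral over $[\ell,\infty)$. Your Tonelli justification for the interchanges is a welcome addition that the paper leaves implicit.
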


\begin{proof}
\begin{align*}
    &\left(\frac{\mu-\lambda}{\mu}\right)\sum_{k=0}^{\infty} \left(\frac{\lambda}{\mu}\right)^k\int_{\ell}^{\infty}\frac{\mu^{k+1}x^ke^{-\mu x}}{k!}\sum_{j=0}^{\infty}\left(\frac{\ell}{x}\right)^j\frac{e^{-\lambda p x } (\lambda p x )^j }{j!}dx\\
    &=\left(\mu-\lambda\right)\int_{\ell}^{\infty}e^{-(\mu+\lambda p) x}\sum_{k=0}^{\infty} \frac{(\lambda x)^k}{k!}\sum_{j=0}^{\infty}\frac{(\lambda p \ell )^j }{j!}dx\\
    &=\left(\mu-\lambda\right)\int_{\ell}^{\infty}e^{-(\mu+\lambda p) x}e^{\lambda x}e^{\lambda p \ell}dx\\
    &=\left(\mu-\lambda\right)e^{\lambda p \ell}\int_{\ell}^{\infty}e^{-(\mu-(1-p)\lambda) x}dx\\
    &=\left(\mu-\lambda\right)e^{\lambda p \ell}\frac{e^{-(\mu-(1-p)\lambda) \ell}}{\mu-(1-p)\lambda}\\
    &=\frac{\mu-\lambda}{\mu-(1-p)\lambda}e^{-(\mu-\lambda) \ell}.
\end{align*}

\end{proof}

We are now able to sum over the distribution of the queue length to leverage Lemma~\ref{lem:splitCompQ} in solving for the full complement of the after arrival probability. We also employ Lemmas~\ref{lem:serviceShort} and~\ref{lem:serviceLong} for the sake of brevity within the proof of the following theorem.

\begin{theorem}\label{thm:after-arrival}
Let a susceptible customer join the $M/M/1$ queue in steady state, then the probability the customer gets infected by a customer who joins the queue after their arrival is equal to
$$\mathbb{P}(I_{aa})=\frac{p\lambda}{\mu-\lambda(1-p)}e^{-(\mu-\lambda) \ell}.$$
\end{theorem}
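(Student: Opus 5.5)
The plan is to compute the complement $\mathbb{P}(I_{aa}^c)$ by conditioning on the queue length $Q$ seen by the tagged arrival. Then we subtract from one. By PASTA, $Q$ has the stationary $M/M/1$ distribution, so
\begin{equation*}
\mathbb{P}(I_{aa}^c)=\sum_{k=0}^{\infty}\left(1-\frac{\lambda}{\mu}\right)\left(\frac{\lambda}{\mu}\right)^k\,\mathbb{P}(I_{aa}^c\mid Q=k).
\end{equation*}
Lemma~\ref{lem:splitCompQ} expresses each conditional probability as the sum of two integrals, over $[0,\ell]$ and $[\ell,\infty)$. The integrand is the Erlang$(k+1,\mu)$ density of the sojourn time given $Q=k$, weighted by the conditional probability of no after-arrival infection.

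Next I would distribute the sum over $k$ across the two integrals. This is justified by Tonelli, since every term is nonnegative. The resulting two weighted sums are exactly the left-hand sides of Lemma~\ref{lem:serviceShort} and Lemma~\ref{lem:serviceLong}. Invoking those identities gives
\begin{equation*}
\mathbb{P}(I_{aa}^c)=1-e^{-(\mu-\lambda)\ell}+\frac{\mu-\lambda}{\mu-(1-p)\lambda}e^{-(\mu-\lambda)\ell}.
\end{equation*}

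The last step is algebra:
\begin{equation*}
\mathbb{P}(I_{aa})=e^{-(\mu-\lambda)\ell}\left(1-\frac{\mu-\lambda}{\mu-(1-p)\lambda}\right)=\frac{p\lambda}{\mu-(1-p)\lambda}e^{-(\mu-\lambda)\ell}.
\end{equation*}
Here $\mu-(1-p)\lambda-(\mu-\lambda)=p\lambda$. As a sanity check, this matches Theorem~\ref{thm:upon-arrival}. That symmetry is expected: the first factor is $\mathbb{P}(\mathcal{S}>\ell)$, and the second is the probability that an infectious arrival (rate $\lambda p$) occurs before the exponential$(\mu-\lambda)$ clock.

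Most of the heavy lifting is already done in the earlier lemmas, so I expect no real obstacle. The one point needing care is the interchange of the sum over $k$ with the integrals and the inner Poisson sums. Nonnegativity handles it. I would also confirm that the conditional law of $M_I$ given $\mathcal{S}=x$ is Poisson$(\lambda p x)$ independently of $Q$, which holds because future arrivals are independent of the current state.
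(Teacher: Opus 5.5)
Your proposal is correct and follows the paper's proof essentially step for step. Like the paper, you condition on $Q$, split $\mathbb{P}(I_{aa}^c\mid Q=k)$ with Lemma~\ref{lem:splitCompQ}, evaluate the two weighted sums with Lemmas~\ref{lem:serviceShort} and~\ref{lem:serviceLong}, and simplify; your Tonelli justification for the interchange and your reading of $\frac{\lambda p}{\lambda p+\mu-\lambda}$ as a competing-exponentials probability are useful additions the paper leaves implicit.
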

\begin{proof}

\begin{align*}
    \mathbb{P}(I_{aa})
    =& 1-\mathbb{P}(I_{aa}^c)\\
    =& 1-\sum_{k=0}^{\infty} \mathbb{P}(Q=k)\mathbb{P}(I_{aa}^c|Q=k)\\
    =& 1-\sum_{k=0}^{\infty}\left(\frac{\lambda}{\mu}\right)^k\left(1-\frac{\lambda}{\mu}\right)\mathbb{P}(I_{aa}^c|Q=k.)\intertext{Using Lemma~\ref{lem:splitCompQ}, we split this probability into two integrals}
    =&1-\frac{\mu-\lambda}{\mu}\sum_{k=0}^{\infty}\left(\frac{\lambda}{\mu}\right)^k\int_0^{\ell}\frac{\mu^{k+1}x^ke^{-\mu x}}{k!}\sum_{j=0}^{\infty}\frac{ e^{-\lambda p x } (\lambda p x )^j }{j!}dx\\
    &-\frac{\mu-\lambda}{\mu}\sum_{k=0}^{\infty}\left(\frac{\lambda}{\mu}\right)^k\int_{\ell}^{\infty}\frac{\mu^{k+1}x^ke^{-\mu x}}{k!}\sum_{j=0}^{\infty}\left(\frac{\ell}{x}\right)^j\frac{
    e^{-\lambda p x } (\lambda p x )^j }{j!}dx.\intertext{We employ Lemma~\ref{lem:serviceShort} and Lemma~\ref{lem:serviceLong} to substitute for these first and second summations, respectively, thus getting,}
    =& 1-\left(1-e^{-(\mu-\lambda) \ell}
 +\frac{\mu-\lambda}{\mu-\lambda(1-p)}e^{-(\mu-\lambda) \ell}\right)\\
 =& \left(1-\frac{\mu-\lambda}{\mu-\lambda(1-p)}\right)e^{-(\mu-\lambda) \ell}\\
 =& \frac{p\lambda}{\mu-\lambda(1-p)}e^{-(\mu-\lambda) \ell} .
    \end{align*}
\end{proof}

We immediately note that this shows us symmetry around the customers who arrive before and after a tagged arrival (and overlap with the tagged arrival), as we see in Theorems~\ref{thm:upon-arrival} and~\ref{thm:after-arrival} that the probabilities of a susceptible arrival being infected by a customer already in the queue upon their arrival and of being infected by a customer who arrives after them are equal. This observation inspires a conjecture for all systems which satisfy a \fcfs requirement.

\begin{conjecture}\label{conj:symmetry}
    For the $G/G/1$ queueing system with the \fcfs discipline, the probabilities of infection upon and after arrival are equivalent. Further, given a known sojourn time we conjecture that the events of infection upon and after arrival are independent and identically distributed.
\end{conjecture}

While we have shown this to be true for the $M/M/1$ system, our conjecture centers around the idea that this is more generally true, so long as the discipline is \fcfs. The intuition behind this conjecture under this discipline is that the probability of infection in either case is understood to be the probability that for a tagged customer, within all overlaps at or surpassing the threshold of infection, at least one is infectious. This is then, for a known number of overlaps, a binomial probability, clearly equivalent without dependence on whether the case is upon or after arrival. As such, this conjecture is that the number of such overlaps is equivalent in the upon and after arrival cases.

The further clause follows immediately from the independence of the conditional events so long as the initial conjecture holds.

\subsection{Calculating the probability of getting infected}

We now wish to find the probability that a customer becomes infected either upon or after arrival (again, recall these events are not disjoint). In doing so, we leverage the conditional independence of the two cases given a fixed sojourn time. To that end, a few lemmas about a fixed sojourn time are useful.

Firstly, we recognize that the sojourn time in steady state is exponentially distributed with parameter $\mu-\lambda.$

\begin{lemma}\label{lem:prob-sojourn-x}
A customer arriving to the $M/M/1$ queue in steady state has an exponential sojourn time probability density function i.e.
    $$f_{\mathcal{S}}(x) = (\mu-\lambda)e^{-(\mu-\lambda) x}.$$
\end{lemma}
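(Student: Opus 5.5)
The plan is to condition on the queue length $Q$ seen by the tagged arrival and then apply the law of total probability. By PASTA, an arriving customer sees the time-stationary distribution, so
\[
\mathbb{P}(Q=k)=\left(1-\tfrac{\lambda}{\mu}\right)\left(\tfrac{\lambda}{\mu}\right)^k, \qquad k\ge 0,
\]
which is exactly the weighting already used in Theorems~\ref{thm:upon-arrival} and~\ref{thm:after-arrival}.

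Given $Q=k$, under FCFS the tagged customer's sojourn time is
\[
\mathcal{S}=R_1+\sum_{m=2}^{k}S_m+S .
\]
By memorylessness, $R_1$ is again exponential with rate $\mu$. Hence $\mathcal{S}$ is a sum of $k+1$ i.i.d.\ exponential($\mu$) variables, so it is Erlang$(k+1,\mu)$ with density
\[
\frac{\mu^{k+1}x^k e^{-\mu x}}{k!},
\]
as already used in Lemma~\ref{lem:splitCompQ}.

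Next I would mix these densities over the geometric law of $Q$ and exchange the sum with the density by Tonelli, since all terms are nonnegative:
\[
f_{\mathcal{S}}(x)=\sum_{k=0}^\infty \left(1-\tfrac{\lambda}{\mu}\right)\left(\tfrac{\lambda}{\mu}\right)^k\frac{\mu^{k+1}x^ke^{-\mu x}}{k!}
=(\mu-\lambda)e^{-\mu x}\sum_{k=0}^\infty\frac{(\lambda x)^k}{k!}
=(\mu-\lambda)e^{-(\mu-\lambda)x}.
\]
This is the same exponential-series collapse that appears in Lemma~\ref{lem:serviceShort}.

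There is no real obstacle here. The only point needing care is justifying that the residual service of the customer in service is exponential($\mu$) and independent of the other service times, so that the conditional Erlang form holds. This follows from the memoryless property of the service distribution.

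As a cross-check, the Laplace transform gives the same answer. Given $Q=k$, the transform is $(\mu/(\mu+s))^{k+1}$. The probability generating function of the geometric law then yields $(\mu-\lambda)/(\mu-\lambda+s)$, which is the transform of the exponential$(\mu-\lambda)$ distribution.
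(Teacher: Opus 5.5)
Your proof is correct. The paper gives no argument for this lemma and simply calls it well known, so there is nothing to compare step by step. What you write is the standard derivation: condition on the geometric queue length seen at arrival (PASTA), use the conditional $\mathrm{Erlang}(k+1,\mu)$ sojourn density, and sum the exponential series. It relies on exactly the ingredients the paper uses elsewhere, and the paper inverts this same identity by Bayes' rule in Lemma~\ref{lem:queue-given-sojourn}. One cosmetic point: for $k=0$ your expression $R_1+\sum_{m=2}^{k}S_m+S$ should be read simply as $S$.
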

\begin{proof}
The proof is well known in the queueing literature.
\end{proof}

A second lemma demonstrates that, given the sojourn time $x$, the length of the queue follows a Poisson distribution with parameter $\lambda x$.
\begin{lemma}\label{lem:queue-given-sojourn}
For a customer arriving to the $M/M/1$ queue in steady state with sojourn time $x$, the likelihood that there are $r$ customers in the system at their time of arrival is Poisson,
    $$\mathbb{P}(Q=r|\mathcal{S} = x) = \frac{e^{-\lambda x}(\lambda x)^r}{r!}.$$
It is additionally convenient for our purposes to note that this result holds identically for $\mathbb{P}(Q=r|\mathcal{R} = x)$.
\end{lemma}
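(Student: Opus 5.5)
We prove Lemma~\ref{lem:queue-given-sojourn} with Bayes' rule, combining the steady-state queue-length distribution with the conditional law of the sojourn time given the queue length. The aim is to write $\mathbb{P}(Q=r\mid\mathcal{S}=x)$ as a ratio of densities in which all $r$-independent factors cancel, leaving a Poisson mass function in $r$.

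\emph{Step 1: the ingredients.} By PASTA, an arriving customer sees the stationary distribution, $\mathbb{P}(Q=r)=(1-\lambda/\mu)(\lambda/\mu)^r$. Given $Q=r$, the tagged customer's sojourn time is the remaining service of the customer in service plus $r-1$ full services plus their own service. By memorylessness this is a sum of $r+1$ i.i.d.\ $\mathrm{Exp}(\mu)$ variables, so it is Erlang,
\begin{equation*}
f_{\mathcal{S}\mid Q=r}(x)=\frac{\mu^{r+1}x^r e^{-\mu x}}{r!}.
\end{equation*}
This is exactly the density already used in Lemma~\ref{lem:splitCompQ}. The marginal density is $f_{\mathcal{S}}(x)=(\mu-\lambda)e^{-(\mu-\lambda)x}$ by Lemma~\ref{lem:prob-sojourn-x}. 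For self-containment, it can also be recovered by summing the products below over $r$, as in the proof of Lemma~\ref{lem:serviceShort}.

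\emph{Step 2: Bayes' rule.} We compute
\begin{equation*}
\mathbb{P}(Q=r\mid \mathcal{S}=x)=\frac{\mathbb{P}(Q=r)\,f_{\mathcal{S}\mid Q=r}(x)}{f_{\mathcal{S}}(x)}
=\frac{\left(1-\frac{\lambda}{\mu}\right)\left(\frac{\lambda}{\mu}\right)^r\frac{\mu^{r+1}x^re^{-\mu x}}{r!}}{(\mu-\lambda)e^{-(\mu-\lambda)x}}.
\end{equation*}
The factor $(1-\lambda/\mu)\mu=\mu-\lambda$ cancels the denominator's constant, $\mu^r$ cancels against $\mu^{-r}$, and $e^{-\mu x}/e^{-(\mu-\lambda)x}=e^{-\lambda x}$. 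This leaves $e^{-\lambda x}(\lambda x)^r/r!$, as claimed.

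\emph{Step 3: the remaining sojourn time.} For $\mathcal{R}$, we interpret it as the tagged customer's remaining sojourn time, measured from an arrival epoch (the PASTA viewpoint). It is again the remaining service in progress plus subsequent exponential services, hence Erlang$(r+1,\mu)$ given $Q=r$ by memorylessness. Its marginal is exponential with rate $\mu-\lambda$, so the same computation applies verbatim.

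The main obstacle is not the algebra but justifying the conditional Erlang law. This requires PASTA to fix the queue length seen at arrival, FCFS so that the tagged customer's sojourn is determined only by the $r$ customers ahead, and memorylessness of the residual service of the customer in service. We would state these explicitly. We would also note that the identity for $\mathcal{R}$ depends on the definition of $\mathcal{R}$ adopted, which we take to be the one making $\mathcal{R}$ equal in law to $\mathcal{S}$ jointly with $Q$.
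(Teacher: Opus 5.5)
Your argument is the paper's proof: Bayes' rule with the geometric queue length seen at arrival, the Erlang$(r+1,\mu)$ conditional sojourn density, and the exponential marginal, followed by the same cancellations; your handling of $\mathcal{R}$ matches the paper's one-line appeal to memorylessness. The one soft spot, which the paper shares, is that declaring $(Q,\mathcal{R})$ equal in law to $(Q,\mathcal{S})$ assumes the $\mathcal{R}$ clause in the form it is used later (the number of customers still ahead $\ell$ time units after arrival) rather than proving it. It is easy to prove: given $Q=r$ and $\mathcal{S}=x$, the $r$ departure epochs ahead of the tagged customer are distributed as ordered i.i.d.\ uniforms on $[0,x]$, so the number still present at time $\ell<x$ is a $(x-\ell)/x$-thinning of a Poisson$(\lambda x)$ count, i.e., Poisson$(\lambda(x-\ell))$.
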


\begin{proof}
The proof of this result appears in \citet{ross2014introduction}.  However, we have included a proof for the reader's convenience.
    \begin{align*}
        \mathbb{P}(Q=r|\mathcal{S} = x) 
        &= \frac{\mathbb{P}(\mathcal{S}=x|Q=r)\mathbb{P}(Q=r)}{\mathbb{P}(\mathcal{S} = x)}\intertext{So by Lemma~\ref{lem:prob-sojourn-x},}
        &= \frac{\frac{\mu^{r+1}x^re^{-\mu x}}{r!}\left(\frac{\lambda}{\mu}\right)^r\left(1-\frac{\lambda}{\mu}\right)}{(\mu-\lambda)e^{-(\mu-\lambda) x}}\\
        &=\frac{e^{-\lambda x}(\lambda x)^r}{r!}.
    \end{align*}
By the memoryless property of the exponential inter-arrival distribution, we find that $\mathbb{P}(\mathcal{R}=x)=\mathbb{P}(\mathcal{S} = x)$, so this proof demonstrates that the result holds identically for both remaining and total sojourn time.
\end{proof}

The insight that the arrival process is a Poisson process allows us to utilize the memoryless property of the exponential distribution (increments of the Poisson are exponential) to only consider the ``snapshot'' of the remaining queue upon a tagged customer's arrival after the infection transmission threshold $\ell$ has passed. In doing so, we are able to restrict exploring the upon arrival case to only those customers who we know will remain in the system long enough to infect the tagged customer.

Leveraging this insight, We now find it useful to write a small lemma regarding the upon arrival case under a fixed sojourn time.

\begin{lemma}\label{lem:closest-infec-upon-cant-infect}
For a susceptible arrival to the $M/M/1$ queue in steady state with sojourn time $x$, the probability that there are no infectious customers in the queue upon arrival with remaining sojourn time long enough to infect them is,
    $$\mathbb{P}(\mathcal{R}_{I_{\max}} < \ell|\mathcal{S} = x) = e^{-\lambda p(x-\ell)}.$$
\end{lemma}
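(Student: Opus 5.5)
We prove the statement by conditioning on the queue length upon arrival and using Lemma~\ref{lem:queue-given-sojourn}. Given $\mathcal{S}=x$, the number $Q$ of customers present upon the tagged customer's arrival is Poisson with mean $\lambda x$. We then observe the system at time $\ell$ after the tagged arrival, assuming $x \geq \ell$; for $x<\ell$ infection is impossible and we adopt the natural convention. The customers still present at time $\ell$ who arrived before the tagged customer are exactly those with remaining sojourn at least $\ell$. Because service is FCFS, these customers occupy the last positions of the original queue. Their number equals the number of service completions that have \emph{not} occurred among the $Q$ ahead by time $\ell$.

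The key step is to show that, conditional on $\mathcal{S}=x$, the number $Q'$ of these surviving customers is Poisson with mean $\lambda(x-\ell)$. This follows from memorylessness. The tagged customer's remaining sojourn at time $\ell$ is $x-\ell$, and conditioning on it plays the same role that $\mathcal{S}=x$ did. Applying Lemma~\ref{lem:queue-given-sojourn} to the remaining sojourn $\mathcal{R}=x-\ell$, as that lemma explicitly permits, gives $Q'\mid \mathcal{S}=x \sim \text{Poisson}(\lambda(x-\ell))$.

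An alternative derivation works directly from the data. Given $\mathcal{S}=x$, the service epochs in $[0,x]$ form a Poisson-type configuration: the $Q+1$ completion times are the arrival points of a rate-$\mu$ process conditioned so that the $(Q+1)$th falls at $x$. A direct computation, summing $\frac{e^{-\lambda x}(\lambda x)^r}{r!}$ against the binomial split of the $r$ completions between $[0,\ell]$ and $(\ell,x)$, shows that the completions in $(\ell,x)$ are Poisson with mean $\lambda(x-\ell)$. This uses the fact that, given $\mathcal{S}=x$, the completion epochs are uniformly scattered with intensity $\lambda$.

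Next we apply thinning. Each customer in the original queue is independently infectious with probability $p$, independent of the service times. The infectious customers among the $Q'$ survivors therefore form a Poisson thinning with mean $\lambda p (x-\ell)$. The event $\mathcal{R}_{I_{\max}}<\ell$ is exactly the event that none of the survivors is infectious. This holds because, by FCFS, the last infectious customer $c_{I_{\max}}$ has the longest remaining sojourn among infectious customers. Hence
\[
\mathbb{P}(\mathcal{R}_{I_{\max}}<\ell\mid\mathcal{S}=x)=\sum_{r\ge0}\frac{e^{-\lambda(x-\ell)}(\lambda(x-\ell))^r}{r!}(1-p)^r=e^{-\lambda p(x-\ell)}.
\]

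The main obstacle is justifying rigorously that $Q'$ given $\mathcal{S}=x$ is Poisson$(\lambda(x-\ell))$, since conditioning on the total sojourn introduces dependence among the service times. If the memoryless argument feels loose, we would instead compute it directly. Given $Q=r$ and $\mathcal{S}=x$, the $r$ intermediate completion epochs are distributed as the order statistics of $r$ uniforms on $[0,x]$, because the $\text{Gamma}(r+1,\mu)$ sum is conditioned to equal $x$. Consequently the number of completions in $[0,\ell]$ is Binomial$(r,\ell/x)$. Mixing this over $r\sim$ Poisson$(\lambda x)$ yields a Poisson$(\lambda(x-\ell))$ count of survivors, which settles the key step.
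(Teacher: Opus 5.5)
Your proof is correct and follows the paper's own route. You look at the queue when the tagged customer's remaining sojourn is $x-\ell$, apply Lemma~\ref{lem:queue-given-sojourn} with $\mathcal{R}=x-\ell$ to get a Poisson$(\lambda(x-\ell))$ number of customers still ahead, and sum $(1-p)^k$ against that distribution. Your backup argument is a welcome rigorous justification of the Poisson step, which the paper attributes only to memorylessness: given $Q=r$ and $\mathcal{S}=x$, the intermediate completion epochs are uniform order statistics on $[0,x]$, so the survivors are Binomial$(r,(x-\ell)/x)$, and mixing over $r\sim$ Poisson$(\lambda x)$ gives Poisson$(\lambda(x-\ell))$.
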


\begin{proof}
We recognize that $\mathcal{R}_{I_{\max}}$ is the remaining sojourn time of the closest infectious customer to the tagged customer. We also note from Lemma~\ref{lem:queue-given-sojourn} that for fixed sojourn time $x$, queue length is Poisson. Thus, we utilize the memoryless property of the exponential distribution to recognize that this is the same as confirming that when the tagged customer has remaining sojourn time $x-\ell$ there are no infectious customers ahead of them in the queue.

    \begin{align*}
        \mathbb{P}(\mathcal{R}_{I_{\max}} < \ell|\mathcal{S} = x) 
        &= \sum_{k=0}^{\infty}\mathbb{P}(Q =k |\mathcal{R} = x-\ell)\mathbb{P}(|C_I| = 0|Q =k)\\
        &= \sum_{k=0}^{\infty}e^{-\lambda(x-\ell)}\frac{(\lambda(x-\ell))^k}{k!}(1-p)^k\\
        &= e^{-\lambda(x-\ell)}\sum_{k=0}^{\infty}\frac{(\lambda(1-p)(x-\ell))^k}{k!}\\
        &= e^{-\lambda p(x-\ell)}.
    \end{align*}
\end{proof}

With this final piece, we now can determine the total probability a susceptible arrival gets infected while in the system.

\begin{theorem} \label{thm:total-infection-prob}
Let $I = I_{ua} \cup I_{aa}$ be the event that a susceptible customer gets infected while in the queue. Suppose a susceptible customer joins the $M/M/1$ queue in steady state, then the probability the customer gets infected is equal to
\begin{eqnarray}\label{eqn:prob-infec}
    \mathbb{P}(I) &=&  \mathbb{P}(I_{ua} \cup I_{aa}) =  \frac{ 2 \lambda p}{\mu - \lambda + 2 \lambda p}  e^{- (\mu - \lambda)  \ell} .
\end{eqnarray}
\end{theorem}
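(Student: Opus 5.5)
The plan is to condition on the tagged susceptible customer's sojourn time $\mathcal{S}=x$. Given $x$, I will compute the probability of escaping infection from each source separately. Then I will multiply the two escape probabilities, using conditional independence of $I_{ua}$ and $I_{aa}$ given $\mathcal{S}$. Finally, I will integrate against the exponential sojourn density of Lemma~\ref{lem:prob-sojourn-x}.

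First, if $x<\ell$ no overlap can exceed $\ell$, so $\mathbb{P}(I\mid \mathcal{S}=x)=0$ and only $x\ge \ell$ contributes.

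For $x\ge\ell$, Lemma~\ref{lem:closest-infec-upon-cant-infect} directly gives $\mathbb{P}(I_{ua}^c\mid\mathcal{S}=x)=e^{-\lambda p(x-\ell)}$.

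For the after-arrival part, the FCFS ordering means any later arrival departs after the tagged customer. Its overlap with the tagged customer is therefore $x$ minus its arrival offset. So the tagged customer escapes $I_{aa}$ exactly when no infectious customer arrives in $[0,x-\ell]$. By Poisson thinning this has probability $e^{-\lambda p (x-\ell)}$. Equivalently, averaging Lemma~\ref{lem:integral-prob} over $M_I\sim\mathrm{Poisson}(\lambda p x)$ gives $\sum_j (\ell/x)^j e^{-\lambda p x}(\lambda p x)^j/j! = e^{-\lambda p(x-\ell)}$, consistent with the computation in Lemma~\ref{lem:serviceLong}.

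The step I expect to need the most care is justifying
\[
\mathbb{P}(I^c\mid \mathcal{S}=x)=\mathbb{P}(I_{ua}^c\mid \mathcal{S}=x)\,\mathbb{P}(I_{aa}^c\mid \mathcal{S}=x).
\]
The argument I would give has three parts. The event $I_{aa}$ is determined by the arrival process after the tagged arrival epoch, together with $x$. The sojourn time $\mathcal{S}=S+\sum_{m\le Q}R_m$ and the event $I_{ua}$ depend only on the state and types of customers present at arrival and on the service times of those customers and of the tagged customer. Future Poisson arrivals, with their i.i.d.\ type labels, are independent of all of these quantities. Hence the two events are conditionally independent given $\mathcal{S}=x$. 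This is also what the second clause of Conjecture~\ref{conj:symmetry} asserts in the $M/M/1$ case.

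With this, for $x\ge \ell$ we get $\mathbb{P}(I\mid\mathcal{S}=x)=1-e^{-2\lambda p(x-\ell)}$. It remains to compute
\[
\mathbb{P}(I)=\int_\ell^\infty (\mu-\lambda)e^{-(\mu-\lambda)x}\left(1-e^{-2\lambda p(x-\ell)}\right)dx
= e^{-(\mu-\lambda)\ell}\left(1-\frac{\mu-\lambda}{\mu-\lambda+2\lambda p}\right),
\]
which simplifies to $\frac{2\lambda p}{\mu-\lambda+2\lambda p}e^{-(\mu-\lambda)\ell}$, as claimed.

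As sanity checks, the tail factor $e^{-(\mu-\lambda)\ell}$ is $\mathbb{P}(\mathcal{S}>\ell)$, matching Theorems~\ref{thm:upon-arrival} and~\ref{thm:after-arrival}. Inclusion–exclusion with those theorems also yields the same answer when $\mathbb{P}(I_{ua}\cap I_{aa})$ is computed by the same conditioning.
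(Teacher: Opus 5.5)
Your proposal is correct and follows essentially the same route as the paper: you condition on $\mathcal{S}=x$ and discard $x<\ell$, then use Lemma~\ref{lem:closest-infec-upon-cant-infect} and Poisson thinning on $[0,x-\ell]$ to get two escape probabilities, each equal to $e^{-\lambda p(x-\ell)}$. You then multiply these by conditional independence and integrate against the $\mathrm{Exp}(\mu-\lambda)$ sojourn density of Lemma~\ref{lem:prob-sojourn-x}. The one place you go beyond the paper is that you justify the conditional independence (the after-arrival event depends only on $x$ and the future arrival stream, which is independent of the state at the arrival epoch that determines both $\mathcal{S}$ and $I_{ua}$), whereas the paper simply asserts it.
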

\begin{proof}
\begin{align*}
    \mathbb{P}(I_{ua} \cup I_{aa}) &=
    1 - \mathbb{P}(I_{ua}^c \cap I_{aa}^c)\\
    &= 1- \Big(\mathbb{P}(I_{ua}^c \cap I_{aa}^c | \mathcal{S} < \ell ) \mathbb{P}( \mathcal{S} < \ell ) + \mathbb{P}(I_{ua}^c\cap I_{aa}^c | \mathcal{S} \geq \ell ) \mathbb{P}( \mathcal{S} \geq \ell )\Big) \\
    &= 1-\Big(\mathbb{P}( \mathcal{S} < \ell ) + \mathbb{P}(I_{ua}^c \cap I_{aa}^c | \mathcal{S} \geq \ell ) \mathbb{P}( \mathcal{S} \geq \ell )\Big) \\
    &=  1-\Big(1 - e^{-(\mu - \lambda) \ell} + \mathbb{P}(I_{ua}^c \cap I_{aa}^c , \mathcal{S} \geq \ell )\Big)  \intertext{We note that $I_{ua}^c$ and $I_{aa}^c$ are independent given $\mathcal{S} =x$. For fixed sojourn time $x$, we also recognize that \textit{not} getting infected upon arrival is dependent on the closest infectious customer having remaining sojourn time less than $\ell$. Similarly, \textit{not} getting infected after arrival is dependent on no infectious customers arriving within $x-\ell$.}
    &=  e^{-(\mu - \lambda) \ell} - \left( \int^{\infty}_{\ell} \mathbb{P}(\mathcal{R}_{I_{\max}} < \ell |\mathcal{S} =x) \cdot \mathbb{P}(N_I(x-\ell ) = 0) \cdot  \mathbb{P}(\mathcal{S} =x) dx\right)  \intertext{By Lemmas~\ref{lem:closest-infec-upon-cant-infect} and~\ref{lem:prob-sojourn-x},}
    &=  e^{-(\mu - \lambda) \ell} - \left( \int^{\infty}_{\ell} e^{-2\lambda p (x-\ell)}  \cdot  (\mu - \lambda) e^{-(\mu - \lambda) x} dx \right)  \\
    &= e^{-(\mu - \lambda) \ell} - e^{2 \lambda p \ell} \left( \int^{\infty}_{\ell} e^{-(2\lambda p + \mu - \lambda ) x}  \cdot  (\mu - \lambda) dx \right)  \\
    &=  e^{-(\mu - \lambda) \ell} - e^{- (\mu - \lambda)  \ell} \frac{ \mu - \lambda}{\mu - \lambda + 2 \lambda p}   \\
    &= \frac{ 2 \lambda p}{\mu - \lambda + 2 \lambda p}e^{- (\mu - \lambda)  \ell}.
\end{align*}
\end{proof}

Note that this result is intuitive. We get that the combined probability for a susceptible customer of getting infected by \textit{both} a customer in the system upon their arrival \textit{and} by a customer who arrives after them is the product of two probabilities: the probability of the tagged customer being in the system for the infection transmission threshold and the probability that the exponential of the tagged customer's pass-through rate is more than than the exponential of twice the rate of infectious arrivals to the system (twice, since we have the two cases of those there upon the tagged customer's arrival and those who arrive while the tagged customer remains in the system). This also demonstrates that the coupling of the upon- and after-arrival probabilities of infection occurs through the shared requirement that the tagged customer's sojourn time is over the infection transmission threshold.

One should note that this formula does not match with the expression given in \citet{kang2021queueing} since our formula is exact and is not an asymptotic expression.  Moreover, this result is close to an approximation that was developed using the maximum overlap time tail distribution as Theorem 3.3 in \citet{palomo2023maximum}.

\begin{remark}
    From a practical point of view, a manager of a service system might be interested in understanding what arrival rate would induce a probability of infection $\epsilon$. This has no closed form, but one can give an approximation based on heavy or low traffic.

    For a low traffic setting, where $\lambda \ll \mu$, we can approximate the denominator of Equation \ref{eqn:prob-infec} as $\mu$, such that we have \begin{align*}
        \epsilon &\approx \frac{2p\lambda}{\mu}e^{-(\mu-\lambda)\ell}\intertext{whose terms we rearrange to get,}
        \ell\frac{\mu}{2p} e^{\mu\ell} \epsilon&\approx \ell \lambda e^{\lambda\ell}.\intertext{As the Lambert $W$ function is the inverse of $f(w) = w e^w$, we can thus rewrite this as}
        \lambda(\epsilon) &\approx \frac{1}{\ell}W\left(\frac{\mu \ell \epsilon}{2p} e^{\mu \ell}\right),\intertext{where $W$ is the Lambert $W$ function.}
    \end{align*}

    As the exponential term dominates Equation~\ref{eqn:prob-infec}, this approximation is useful while treating the denominator as $\mu$ is reasonable. In contrast, once we assume $\lambda \approx \mu$, we find it more useful to define $\delta = \mu-\lambda$ such that $\delta \ll \mu$, so that we can rewrite Equation~\ref{eqn:prob-infec} as
    \begin{align*}
        \epsilon &\approx \frac{2p(\mu-\delta)}{\mu +(2p-1)(\mu-\delta)}e^{-\delta\ell}\\
        \epsilon &\approx \frac{2p(\mu-\delta)}{2p(\mu -\delta)+\delta}e^{-\delta\ell}\intertext{The Taylor expansions of the scalar term, $1-\frac{\delta}{2p\mu}-\frac{(2p-1)\delta^2}{(2p\mu)^2}-\frac{(2p-1)^2\delta^3}{(2p\mu)^3}+\cdots$, and of the exponential term, $1-\delta\ell + \frac{(\delta\ell)^2}{2}-\frac{(\delta\ell)^3}{3}+\cdots$, let us rewrite the approximation as}
        \epsilon &\approx 1 - \delta\ell -\frac{\delta}{2p\mu}+O\left(\delta^2\right)\intertext{which we can rearrange to isolate $\delta$ as}
        \delta &\approx \frac{2p\mu(1-\epsilon)}{1+2p\mu\ell}\intertext{So substituting back for $\lambda$ we get our high utility approximation as}
        \lambda(\epsilon) &\approx \mu\left(1-\frac{2p(1-\epsilon)}{1+2p\mu\ell}\right)
    \end{align*}

    Note for these two approximations that their accuracy improves as either $\lambda$ (normalized by $\mu$) or $\delta$, respectively, becomes smaller.
    \end{remark}

\subsection{Numerical results for infection probability}

In this section, we provide some plots for the infection probability to gain some insights about our results.  In Figure \ref{fig:fig_1}, we observe that that for low system utilization, i.e.~small $\rho=\frac{\lambda}{\mu}$, the curve is nearly linear in $p$, and has a low maximum. As the utilization increases, the infection probability increases rapidly even for a low proportion of infectious arrivals of the system.

    \begin{figure}[h]
    \centering
    \includegraphics[width=0.7\textwidth]{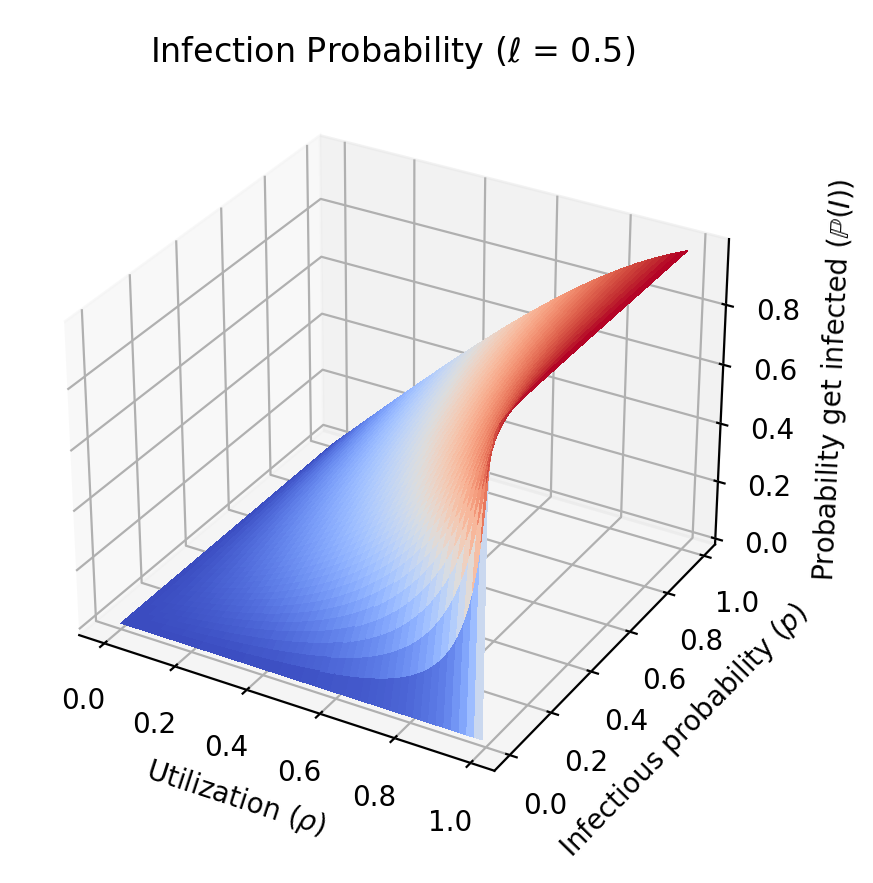}
    \caption{The total probability of becoming infected in queue depending on utilization $\rho = \frac{\lambda}{\mu}$ and proportion of infectious arrivals $p$. }
    \label{fig:fig_1}
\end{figure}

In addition, we observe that the infection probability is invariant to scaling both $\lambda$ and $\mu$ equivalently provided that we also scale the threshold $\ell$ by the inverse of the change.  This implies that if we make $\lambda \to \lambda n $, $\mu \to \mu n $, then we need to scale $\ell \to \frac{\ell}{n}$.  We observe this phenomenon in Figure \ref{fig:scale-var}.  On the left of Figure \ref{fig:scale-var}, we observe that not scaling the threshold yields very small probabilities of infection as higher rates correspond to shorter sojourn times.  On the right of Figure \ref{fig:scale-var}, we observe that if we also scale the threshold appropriately, the infection probability is invariant across different values of scaling.  

\begin{figure}[H] \label{fig:with-var}
    \centering
\includegraphics[width=0.5\textwidth]{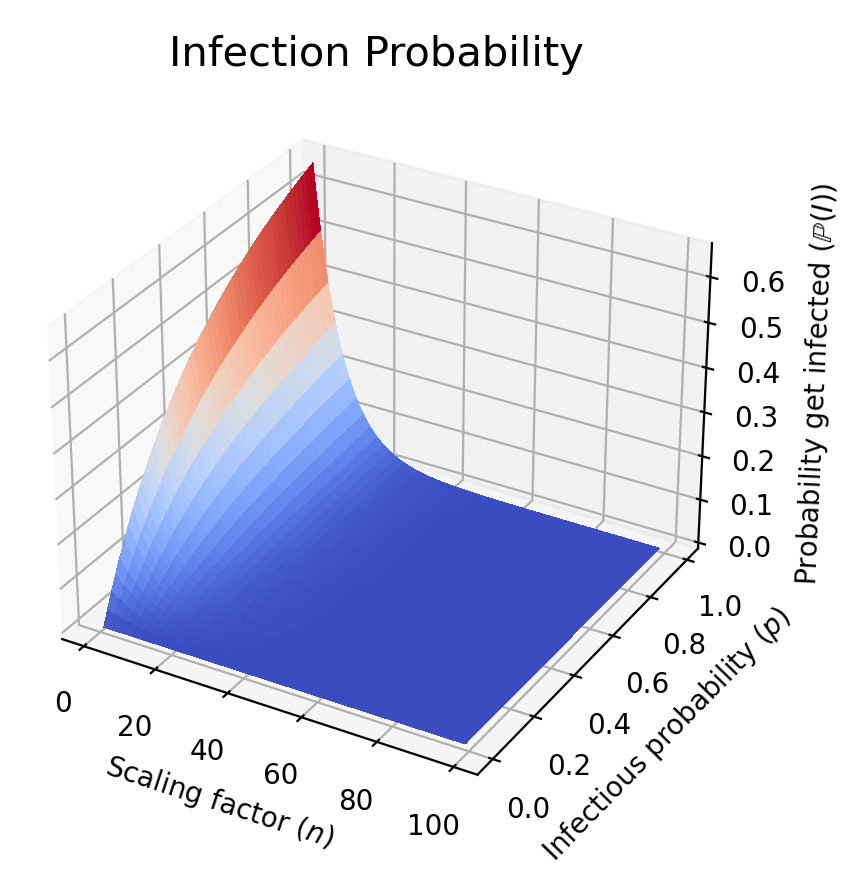}~\includegraphics[width=0.5\textwidth]{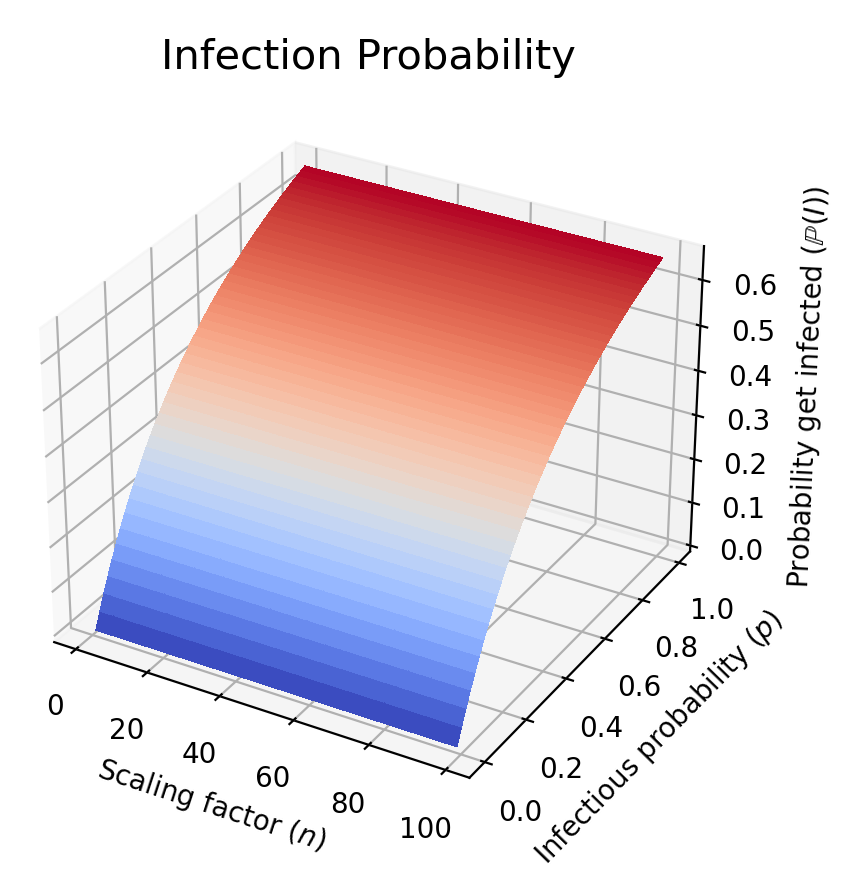}
\caption{Scaling $\lambda$ and $\mu$.  (Left) No Scaling of Threshold.  (Right) Scaling of threshold.  }
\label{fig:scale-var}
\end{figure}

\begin{remark}
   A key observation is that our analytical expressions are initially derived for a 
\emph{fixed} infection transmission threshold $\ell$.  However, this formulation is far more 
general than it may appear at first glance.  If the infection transmission threshold is 
allowed to be random for each susceptible customer, with distribution function $F_\Theta(\ell)$, then the corresponding 
performance measures can be obtained simply by integrating our fixed-threshold 
quantities with respect to $F_\Theta$.  In particular, if $G(\ell)$ denotes any of 
the fixed-threshold expressions derived in our analysis, then the random-threshold 
version is given by
\[
\mathbb{E}\!\left[ G(\Theta) \right] 
    = \int_{0}^{\infty} G(\ell)\, dF_\Theta(\ell).
\]
Because $G(\ell)$ always appears in the form of an exponential evaluated at 
$\mu - \lambda$, this integration step reduces to evaluating the 
Laplace-Stieltjes transform of the threshold distribution at the argument 
$\mu - \lambda$.  That is,
\[
\mathbb{E}\!\left[ e^{-(\mu - \lambda)\Theta} \right]
    = \mathcal{L}_\Theta(\mu - \lambda),
\]
so the random-threshold expressions follow directly by replacing the exponential 
term associated with the fixed threshold by the Laplace-Stieltjes transform of 
the threshold distribution. Consequently, our framework admits 
\emph{arbitrary threshold distributions} for each tagged susceptible customer with no additional structural 
assumptions. This is a manner in which our work can be generalized beyond what is done in \citet{kang2021queueing}. While \citet{kang2021queueing} allows for a pairwise infection threshold for each pair of susceptible-infectious customers it is worth noting two things. Firstly, that their analysis assumes only a single infectious customer in the system at a time, such that it is not substantially different from our consideration of a random threshold for a tagged susceptible customer. Secondly, we conjecture that the random pairwise and random per tagged customer infection threshold cases are not greatly dissimilar, see Tables~\ref{tab:compareaveragemeans} and \ref{tab:compareaveragevars}.
Our approach seamlessly extends to \emph{any} 
distribution, for example, deterministic, Erlang, hyperexponential, heavy-tailed, 
or empirical threshold models merely by substituting the appropriate 
Laplace-Stieltjes transform.  Hence, our results provide a unified treatment 
that accommodates a much broader class of infection mechanisms. 
\end{remark}

\section{Multiple Infections}\label{sec:mult-infec}

We now consider $N_{ua}$ as being the number of infectious customers a susceptible arrival is infected by upon arrival and $N_{aa}$ the number of infectious customers who arrive after the tagged customer and infect them. As a way of exploring the severity of an infection, we want to explore the probability of a tagged customer getting infected by some specific number, $k$, other customers that we can imagine represents the amount needed for a higher degree of infection.

\begin{figure}[H]
\centering
\makebox[\linewidth][c]{
\begin{subfigure}[b]{0.6\textwidth}
        \centering
    \includegraphics[width=.98\textwidth]{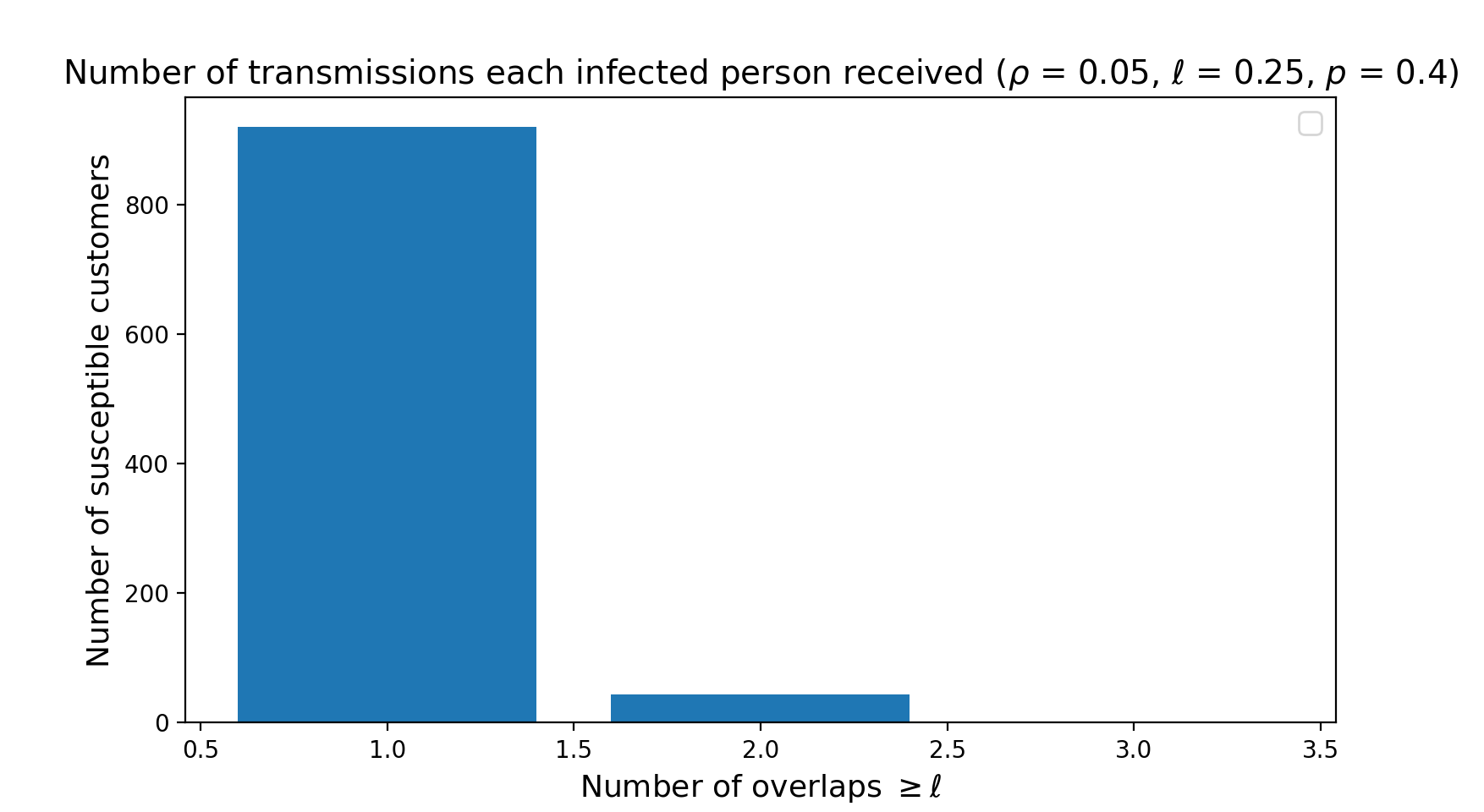}
    \captionsetup{width=.65\linewidth}
\caption{963 susceptible people were infected. Experimentally, only one person infected them 95.43\% of the time.}
\end{subfigure}
\begin{subfigure}[b]{0.6\textwidth}
    \centering
\includegraphics[width=.98\textwidth]{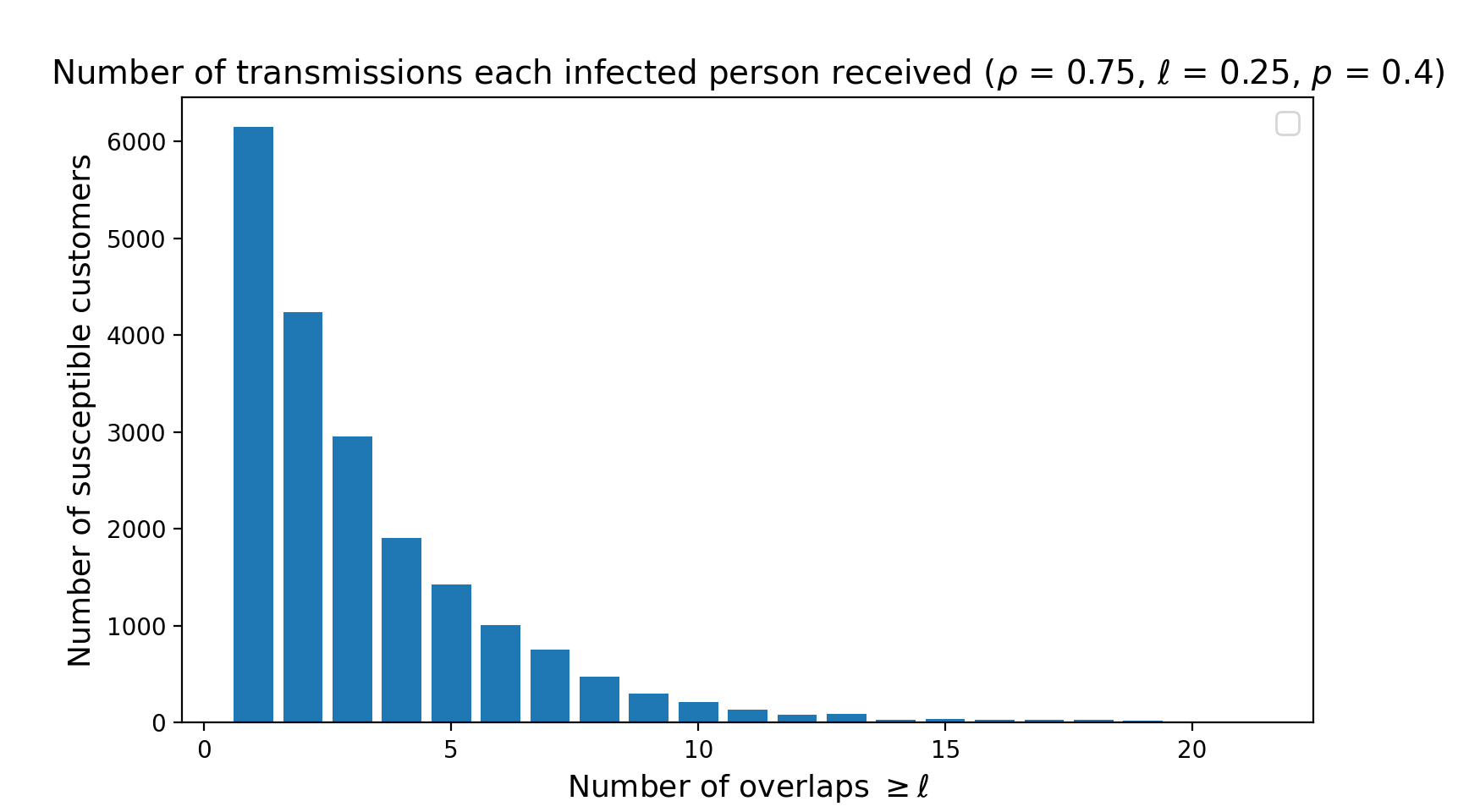}
\captionsetup{width=.65\linewidth}
\caption{19,904 susceptible people were infected. Experimentally, only one person infected them 30.88\% of the time.}
\end{subfigure}}
\caption{Simulating over 50,000 customers, the ``amount'' of infection transmitted to a susceptible customer, as measured by the amount of infectious people whom the susceptible customer overlaps with for at least $\ell$, experimentally depends greatly on the system utilization rate $\rho$.}
\label{fig:transmissions}
\end{figure}

\subsection{Exactly $k$ infections}

We first begin by recognizing that this question is only of novel interest when we consider $k \geq 1$.

\begin{lemma}
For a susceptible arrival to the $M/M/1$ queue in steady state, the probability no customers infect them is,
    $$\mathbb{P}(N_{ua}=0) =\mathbb{P}(N_{aa}=0) = 1-\frac{\lambda p}{\mu-\lambda(1-p)}e^{-(\mu-\lambda)\ell}.$$
\end{lemma}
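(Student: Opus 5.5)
The plan is to identify the events $\{N_{ua}=0\}$ and $\{N_{aa}=0\}$ with the complements of the infection events already analyzed in Section~\ref{sec:prob-infec}. Here $N_{ua}$ counts the infectious customers present upon the tagged susceptible's arrival whose overlap with the tagged customer is at least $\ell$. So $N_{ua}\geq 1$ holds exactly when at least one such customer exists. That is precisely the event $I_{ua}$ defined earlier: the tagged customer is infected by someone already in the queue. Similarly, $N_{aa}\geq 1$ if and only if some later infectious arrival overlaps the tagged customer for at least $\ell$, which is the event $I_{aa}$. Hence
\[
\{N_{ua}=0\}=I_{ua}^c, \qquad \{N_{aa}=0\}=I_{aa}^c .
\]

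The key steps, in order, are as follows.

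\begin{enumerate}[label=(\roman*)]
\item Make the identification above precise, using the conventions from Section~\ref{sec:prob-infec}. In the upon-arrival case, the infecting customers are those $c_i\in C_I$ with $\sum_{m=1}^i R_m\geq \ell$. Since the tagged customer's sojourn $S+\sum_{m=1}^k R_m$ dominates each such sum, overlap at least $\ell$ is equivalent to remaining sojourn at least $\ell$, exactly as in the proof of Lemma~\ref{lem:cond-upon-infect}. In the after-arrival case, an infectious arrival at time $A_i$ overlaps for $x-A_i$ under FCFS, matching the event $A_1\le x-\ell$ used in Lemma~\ref{lem:splitCompQ}.
\item Take complements: $\mathbb{P}(N_{ua}=0)=1-\mathbb{P}(I_{ua})$ and $\mathbb{P}(N_{aa}=0)=1-\mathbb{P}(I_{aa})$.
\item Substitute Theorems~\ref{thm:upon-arrival} and~\ref{thm:after-arrival}. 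Both give the value $\frac{p\lambda}{\mu-\lambda(1-p)}e^{-(\mu-\lambda)\ell}$, so the two probabilities coincide and equal the stated expression.
\end{enumerate}

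The only real obstacle is the boundary convention. Infection is defined as overlap ``more than $\ell$'', while the proofs use $\geq\ell$. Because all the relevant quantities (sums of exponentials, uniform arrival times) are continuous, the event of overlap exactly $\ell$ has probability zero, so either convention gives the same probabilities.

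A secondary check is that the count definitions of $N_{ua}$ and $N_{aa}$ are consistent with the single-threshold model, in which every infectious overlap of length at least $\ell$ counts. This ensures that ``no infector'' is literally the complement event, with no double counting across infectious customers.
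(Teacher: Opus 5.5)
Your proposal is correct and matches the paper's proof: both identify $\{N_{ua}=0\}$ and $\{N_{aa}=0\}$ with the non-infection events $I_{ua}^c$ and $I_{aa}^c$, and then read off the stated value as the complements of Theorems~\ref{thm:upon-arrival} and~\ref{thm:after-arrival}. Your checks of the overlap identification and of the probability-zero boundary case, where the overlap equals $\ell$ exactly, just make explicit what the paper leaves implicit.
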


\begin{proof}
    We recognize that the probability of getting infected by zero people is equivalently the probability of not getting infected at all. This result then follows immediately as the complement of Theorems~\ref{thm:upon-arrival} and~\ref{thm:after-arrival}.
\end{proof}

\subsubsection{Probability of $k$ infection upon arrival}

We next consider the case that $k$ people infect the tagged susceptible customer upon arrival. Our approach will be restricting the entire queue of people ahead of the tagged customer to only those who are in the system long enough to potentially infect the tagged customer. That is, all the people whose remaining sojourn time is at least $\ell$. Of these people, we know \textit{every} customer who is infectious will infect the tagged customer. We will thus consider the case where customer $j$ is the first such potentially infection-transmitting customer: that is event $\mathbf{1}_{\tilde{\mathcal{R}_{j}} > \ell}$.

It is thus clearly essential that we know likelihood of this event, so we first calculate the probability of this event for all $j \leq r$, where $r$ is the length of the queue upon arrival.

\begin{lemma}\label{lem:first-sojourn}
    For the $M/M/1$ queue in steady state, if $k$ people are in the queue, then for all $0<j\leq r$, the probability customer $j$ is the first in the queue with remaining sojourn time at least $\ell$ is $$\mathbb{P}\left(\mathbf{1}_{\tilde{\mathcal{R}_{j}} > \ell}|Q = r\right) = \frac{(\mu\ell)^{j-1}e^{-\mu\ell}}{(j-1)!}.$$
\end{lemma}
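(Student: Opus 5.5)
The plan is to reduce the event to a statement about service completions during $[0,\ell]$ and read off a Poisson probability. Condition on $Q=r$ and index the customers present at the tagged arrival as $1,\ldots,r$ in order of arrival, as in the notation section. Under \fcfs, customer $j$'s remaining sojourn time is the partial sum $\tilde{\mathcal R}_j=\sum_{m=1}^{j}R_m$. By the memoryless property, $R_1$ is exponential with rate $\mu$, and $R_m=S_m$ for $m\ge 2$. So $R_1,\ldots,R_r$ are i.i.d.\ exponential with rate $\mu$ and independent of the event $\{Q=r\}$; this is the same fact used in Lemma~\ref{lem:cond-upon-infect}.

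Because these partial sums are nondecreasing in $j$, "customer $j$ is the first in the queue with remaining sojourn at least $\ell$" is exactly the event
\[
\sum_{m=1}^{j-1}R_m<\ell\le\sum_{m=1}^{j}R_m ,
\]
with the empty sum equal to $0$ when $j=1$. The inequalities at $\ell$ can be taken strict or not, since the sums have continuous laws. Now view $R_1,R_2,\ldots$ as the inter-event times of a Poisson process $\{N_\mu(t)\}$ of rate $\mu$, which is the departure process while the server stays busy. The displayed event is then exactly $\{N_\mu(\ell)=j-1\}$.

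The step needing the most care is the truncation at $r$: only $r$ genuine exponentials are available. For $j\le r$, however, the event $\{N_\mu(\ell)=j-1\}$ depends only on $R_1,\ldots,R_j$, all of which exist. We may therefore extend the sequence by fictitious i.i.d.\ exponentials beyond index $r$ without changing the event. Equivalently, the standard Erlang identity gives
\[
\mathbb P\Big(\sum_{m=1}^{j-1}R_m<\ell\le\sum_{m=1}^{j}R_m\Big)=\mathbb P\Big(\sum_{m=1}^{j}R_m\ge \ell\Big)-\mathbb P\Big(\sum_{m=1}^{j-1}R_m\ge\ell\Big),
\]
and the Erlang tail formula already used in Lemma~\ref{lem:cond-upon-infect} turns this difference into the single Poisson term. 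Either route yields
\[
\mathbb P\big(\mathbf 1_{\tilde{\mathcal R}_j>\ell}\,\big|\,Q=r\big)=\frac{(\mu\ell)^{j-1}e^{-\mu\ell}}{(j-1)!},
\]
which does not depend on $r$ for $0<j\le r$, as claimed.
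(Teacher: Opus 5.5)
Your proposal is correct and takes the same route as the paper. Both proofs reduce the event to $\sum_{m=1}^{j-1}R_m<\ell\le\sum_{m=1}^{j}R_m$, where the $R_m$ are i.i.d.\ exponential$(\mu)$ remaining service times independent of $\{Q=r\}$; the only difference is the last step. The paper conditions on $\sum_{m=1}^{j-1}R_m=x$ and integrates $e^{-\mu(\ell-x)}$ against the Erlang$(j-1,\mu)$ density, handling $j=1$ separately, whereas you read the probability off as $\mathbb{P}(N_\mu(\ell)=j-1)$ or as a difference of Erlang tails, which is equivalent and a bit cleaner.
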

\begin{proof}
    First suppose $j = 1$. This is then simply the probability that the first person in the queue has remaining service time at least $\ell$, so $$\mathbb{P}(R_1 \geq \ell|Q=r) =  \mathbb{P}(S_1 \geq \ell) = e^{-\mu\ell}.$$

    Now suppose $j \geq 2$ (for any $j \leq r$). Then we have
    \begin{align*}
        \mathbb{P}\left(\mathbf{1}_{\tilde{\mathcal{R}_{j}} > \ell}|Q = r\right) 
        &= \mathbb{P}\left(\mathbf{1}_{\tilde{\mathcal{R}_{j}} > \ell}\right) \\
        &= \mathbb{P}\left(S_j + \sum_{i=1}^{j-1}R_i \geq \ell, S_{j-1} + \sum_{i=1}^{j-2}R_i < \ell\right)\\
        &= \mathbb{P}\left(\sum_{i=1}^{j}R_i \geq \ell,\sum_{i=1}^{j-1}R_i < \ell\right)\\
        &= \mathbb{P}\left(\sum_{i=1}^{j}R_i \geq \ell\Bigg|\sum_{i=1}^{j-1}R_i < \ell\right)\mathbb{P}\left(\sum_{i=1}^{j-1}R_i < \ell\right)\\
        &= \int_0^{\ell}\mathbb{P}\left(R_j+\sum_{i=1}^{j-1}R_i \geq \ell\Bigg|\sum_{i=1}^{j-1}R_i =x\right)\mathbb{P}\left(\sum_{i=1}^{j-1}R_i  =x\right)dx\\
        &= \int_0^{\ell}\mathbb{P}\left(R_j \geq \ell - x\Bigg|\sum_{i=1}^{j-1}R_i =x\right)\mathbb{P}\left(\sum_{i=1}^{j-1}R_i  =x\right)dx\\
        &= \int_0^{\ell}\mathbb{P}\left(S_j \geq \ell - x\right)\mathbb{P}\left(\sum_{i=1}^{j-1}R_i  =x\right)dx\\
        &= \int_0^{\ell}e^{-\mu(\ell-x)}\frac{\mu^{j-1}x^{j-2}e^{-\mu x}}{(j-2)!}dx\\
        &= \frac{(\mu\ell)^{j-1}e^{-\mu\ell}}{(j-1)!}.
    \end{align*}
\end{proof}

We can now determine the likelihood of $k$ people being able to infect the tagged customer under this assumption, which we do in the following lemma.

\begin{lemma}\label{lem:exactly-k-inner}
    Consider a susceptible arrival to the $M/M/1$ queue in steady state, with queue length $r$ where, for some $j \leq r$,  the $j^{th}$ customer is the first in the line with remaining sojourn time at least $\ell$. For any $k \geq 1$ where $k \leq r-j+1$, the probability $k$ of the customers in the queue infect the tagged new arrival is,
    
    $$\mathbb{P}\left(N_{ua} = k|Q=r,\mathbf{1}_{\tilde{\mathcal{R}_{j}} > \ell}\right) = {r-j+1 \choose k}p^k(1-p)^{r-j+1-k}.$$
\end{lemma}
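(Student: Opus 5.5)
The plan is to reduce the count $N_{ua}$ to a binomial count over a fixed, known set of positions. There are two steps: identify exactly which customers in the queue can infect the tagged arrival, and then use the independence of the customer types from the service-time process.

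\emph{Step 1: identify the potential infectors.} Condition on $Q=r$ and on the event $\mathbf{1}_{\tilde{\mathcal{R}_{j}} > \ell}$. Under \fcfs the remaining sojourn time of customer $i$ is $\mathcal{R}_i=\sum_{m=1}^{i}R_m$, which is nondecreasing in $i$. Hence the event that customer $j$ is the first with $\mathcal{R}_j\ge \ell$ means $\mathcal{R}_i<\ell$ for $i<j$ and $\mathcal{R}_i\ge\ell$ for every $j\le i\le r$. So the customers able to overlap with the tagged arrival for at least $\ell$ are precisely positions $j,\dots,r$, which is $r-j+1$ customers.

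We must also check that the tagged customer's own sojourn time is not the binding constraint. Its sojourn time $\mathcal{S}=S+\sum_{m=1}^r R_m\ge \mathcal{R}_i$ for every $i\le r$, exactly as in the proof of Lemma~\ref{lem:cond-upon-infect}. Therefore the overlap with customer $i$ equals $\mathcal{R}_i$. It follows that customer $i$ transmits to the tagged customer if and only if $i\ge j$ and $i$ is infectious.

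\emph{Step 2: apply independence of types.} Each arrival is labelled infectious independently with probability $p$, and the labels are independent of the arrival and service processes. In particular they are independent of $Q$ and of the remaining service times $R_1,\dots,R_r$, which determine the event $\mathbf{1}_{\tilde{\mathcal{R}_{j}} > \ell}$. Conditioning on these events therefore leaves the labels of positions $j,\dots,r$ i.i.d.\ Bernoulli$(p)$. Since $N_{ua}$ is the number of infectious labels among these $r-j+1$ positions, we get $N_{ua}\mid Q=r,\mathbf{1}_{\tilde{\mathcal{R}_{j}} > \ell}\sim\mathrm{Bin}(r-j+1,p)$. This yields the stated formula for $1\le k\le r-j+1$.

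\emph{Main obstacle.} The only delicate point is interpretive rather than computational. The quantity $N_{ua}$ counts transmissions by infectious customers, with ``infecting'' meaning overlap of at least $\ell$, as in Section~\ref{sec:mult-infec}. It does not stop after the first infection, so customers beyond the first transmitter still count. One should also note that the tagged susceptible customer's status does not affect the labels of the others. With that reading, and using the monotonicity of $\mathcal{R}_i$ from Step 1, the result is immediate.
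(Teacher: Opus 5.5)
Your proof is correct and follows the same route as the paper, which simply notes that exactly the last $r-j+1$ customers in line overlap with the tagged arrival for at least $\ell$ and that each is independently infectious with probability $p$, so $N_{ua}$ is $\mathrm{Bin}(r-j+1,p)$. You additionally spell out what the paper leaves implicit: the monotonicity of $\mathcal{R}_i$, the fact that the tagged customer's own sojourn time never limits the overlap, and the independence of the infectious labels from $Q$ and the service times.
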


\begin{proof}
    Under the assumptions that there are $r$ customers in line upon the tagged customer's arrival and only the last $r-j+1$ of them have remaining sojourn time long enough to infect the tagged custoemr, this is simply a binomial distribution with parameters $r-j+1$ and $p$. Thus, the formula follows immediately.
\end{proof}

The probability that the tagged customer is infected by exactly $k$ customers upon their arrival to the queue follows easily from here. 

\begin{theorem}\label{thm:exactly-k-upon-arrival}
    For a susceptible arrival to the $M/M/1$ queue in steady state, and for any $k \geq 1$, the probability they are infected by $k$ customers in the queue upon their arrival is, 
    $$\mathbb{P}(N_{ua} = k) = \left(1-\frac{\lambda p}{\mu-\lambda(1-p)}\right)\left(\frac{\lambda p}{\mu-\lambda(1-p)}\right)^ke^{-(\mu-\lambda)\ell}.$$
\end{theorem}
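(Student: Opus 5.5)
We condition on the queue length and on which customer is the first able to infect, then sum, mirroring the proof of Theorem~\ref{thm:upon-arrival}. Specifically, we write
\[
\mathbb{P}(N_{ua}=k)=\sum_{r=1}^{\infty}\mathbb{P}(Q=r)\sum_{j=1}^{r-k+1}\mathbb{P}\left(\mathbf{1}_{\tilde{\mathcal{R}_{j}} > \ell}\,\middle|\,Q=r\right)\mathbb{P}\left(N_{ua}=k\,\middle|\,Q=r,\mathbf{1}_{\tilde{\mathcal{R}_{j}} > \ell}\right).
\]
The restriction $j\le r-k+1$ is needed so that at least $k$ customers remain with enough sojourn time. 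Since $k\ge 1$, the event that no customer has remaining sojourn at least $\ell$ contributes nothing. We then substitute $\mathbb{P}(Q=r)=(1-\rho)\rho^r$ with $\rho=\lambda/\mu$, together with Lemma~\ref{lem:first-sojourn} and Lemma~\ref{lem:exactly-k-inner}.

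Next we reindex by $n=j-1$, the number of customers at the head of the queue who leave before time $\ell$, and by $m=r-j+1$, the number of customers who remain. Then $r=n+1+\dots$; more precisely $r=n+m$ with $n\ge 0$ and $m\ge k$. Because $\rho^r=\rho^n\rho^m$, the double sum factors:
\[
(1-\rho)\,e^{-\mu\ell}\sum_{n\ge0}\frac{(\mu\ell)^n\rho^n}{n!}\;\sum_{m\ge k}\binom{m}{k}p^k(1-p)^{m-k}\rho^m .
\]
The first factor is $e^{\lambda\ell}$, which combines with $e^{-\mu\ell}$ to give $e^{-(\mu-\lambda)\ell}$. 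The second factor is a negative binomial generating function:
\[
\sum_{m\ge k}\binom{m}{k}x^{m-k}=(1-x)^{-(k+1)} \quad\text{with } x=\rho(1-p),
\]
so it equals $(p\rho)^k/(1-\rho(1-p))^{k+1}$.

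Finally we simplify. Multiplying through by $\mu$ gives
\[
\frac{(1-\rho)(p\rho)^k}{(1-\rho(1-p))^{k+1}}=\frac{\mu-\lambda}{\mu-\lambda(1-p)}\left(\frac{\lambda p}{\mu-\lambda(1-p)}\right)^k,
\]
and we observe that
\[
\frac{\mu-\lambda}{\mu-\lambda(1-p)}=1-\frac{\lambda p}{\mu-\lambda(1-p)}.
\]
This is exactly the claimed formula.

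The main obstacle is justifying the decomposition, namely that the events $\mathbf{1}_{\tilde{\mathcal{R}_{j}} > \ell}$ partition the relevant outcomes. This rests on the FCFS monotonicity of the remaining sojourn times $\sum_{i\le j}R_i$: everyone from position $j$ onward stays at least $\ell$, and everyone before $j$ leaves before $\ell$. It also requires that infectious labels are independent of the service times, so the binomial count of Lemma~\ref{lem:exactly-k-inner} applies. One must also check that the tagged customer's own sojourn exceeding $\ell$ is automatic once some customer ahead of them remains for time $\ell$, as in Lemma~\ref{lem:cond-upon-infect}. Beyond that, the only work is exchanging the order of summation, which is justified because all terms are nonnegative.
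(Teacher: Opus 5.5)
Your proposal is correct and follows essentially the same route as the paper: condition on $Q=r$ and on the first customer with remaining sojourn at least $\ell$, apply Lemmas~\ref{lem:first-sojourn} and~\ref{lem:exactly-k-inner}, exchange the order of summation, and evaluate the exponential and negative-binomial series. Your reindexing by $(n,m)=(j-1,\,r-j+1)$ simply makes explicit the factorization that the paper obtains by summing over $r$ for each fixed $j$.
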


\begin{proof}
\begin{align*}
        \mathbb{P}(N_{ua} = k) 
        &= \sum_{r=k}^{\infty}\mathbb{P}(Q = r)\sum_{j=1}^{r-k+1}\mathbb{P}\left(\mathbf{1}_{\tilde{\mathcal{R}_{j}} > \ell}|Q=r\right)\mathbb{P}\left(N_{ua} = k|Q=r,\mathbf{1}_{\tilde{\mathcal{R}_{j}} > \ell}\right).\intertext{By Lemmas~\ref{lem:first-sojourn} and~\ref{lem:exactly-k-inner}, this becomes}
        &= \sum_{r=k}^{\infty}\left(\frac{\lambda}{\mu}\right)^r\left(1-\frac{\lambda}{\mu}\right)\sum_{j=1}^{r-k+1}\frac{(\mu\ell)^{j-1}e^{-\mu\ell}}{(j-1)!}{r-j+1 \choose k}p^k(1-p)^{r-j+1-k}\\
        &= \left(\frac{p}{1-p}\right)^k\frac{\mu-\lambda}{\mu}e^{-\mu\ell}\sum_{j=1}^{\infty} \frac{(\mu\ell)^{j-1}}{(j-1)!}\sum_{r=j+k-1}^{\infty}{r-j+1 \choose k}\left(\frac{\lambda}{\mu}\right)^r(1-p)^{r-j+1}\\
        &= \left(\frac{p}{1-p}\right)^k\frac{\mu-\lambda}{\mu}e^{-\mu\ell}\sum_{j=1}^{\infty} \frac{(\mu\ell)^{j-1}}{(j-1)!}\left(\frac{\mu^2(1-p)^k\left(\frac{\lambda}{\mu}\right)^{j+k}\left(\frac{\mu}{\mu-(1-p)\lambda}\right)^k}{\lambda(\mu-\lambda(1-p))}\right)\\
        &= \left(\frac{\lambda p}{\mu-\lambda(1-p)}\right)^k\frac{\mu-\lambda}{\mu-\lambda(1-p)}e^{-\mu\ell}\sum_{j=1}^{\infty} \frac{(\lambda\ell)^{j-1}}{(j-1)!}\\
        &= \left(\frac{\lambda p}{\mu-\lambda(1-p)}\right)^k\frac{\mu-\lambda}{\mu-\lambda(1-p)}e^{-(\mu-\lambda)\ell}.
\end{align*}
\end{proof}

\subsubsection{Probability of $k$ infections after arrival}

For the case in which a susceptible customer is infected by exactly 
$k$ arrivals to the queue after their own arrival, we observe that this probability can be expressed as the likelihood that while the customer has $\ell$ units of time remaining in their sojourn within the system, exactly $k$ infectious arrivals occur. 

\begin{theorem}
    For a susceptible arrival to the $M/M/1$ queue in steady state, and for any $k \geq 1$, the probability they are infected by $k$ customers who arrive to the queue after them is,  
    $$\mathbb{P}(N_{aa} = k) = \left(1-\frac{\lambda p}{\mu-\lambda(1-p)}\right)\left(\frac{\lambda p}{\mu-\lambda(1-p)}\right)^ke^{-(\mu-\lambda)\ell}.$$
\end{theorem}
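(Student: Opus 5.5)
We condition on the tagged susceptible customer's sojourn time $\mathcal{S}=x$, which by Lemma~\ref{lem:prob-sojourn-x} has density $(\mu-\lambda)e^{-(\mu-\lambda)x}$. An infectious customer arriving after the tagged customer at time $A$ overlaps with them for $(x-A)^+$, since under \fcfs the later arrival departs later. That customer therefore infects the tagged one exactly when $A\le x-\ell$. So $N_{aa}$ equals the number of infectious arrivals in $[0,x-\ell]$ when $x\ge \ell$, and $N_{aa}=0$ when $x<\ell$.

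The key probabilistic point is that, conditional on $\mathcal{S}=x$, the infectious arrivals after the tagged customer still form a Poisson process of rate $\lambda p$ on $[0,x]$. This holds because the tagged customer's sojourn time is determined only by the work already present and their own service, not by later arrivals. This independence was already used in Lemma~\ref{lem:splitCompQ}, through $\mathbb{P}(M_I=j\mid\mathcal{S}=x)$ being Poisson with mean $\lambda p x$. Hence, for $k\ge 1$,
\[
\mathbb{P}(N_{aa}=k\mid \mathcal{S}=x)=e^{-\lambda p (x-\ell)}\frac{(\lambda p (x-\ell))^k}{k!}\,\mathbf{1}_{x\ge \ell}.
\]
If one prefers to stay within the paper's earlier framework, the same conditional law follows from Lemma~\ref{lem:integral-prob}'s uniform order-statistics argument: given $M_I=j$, the count of the $j$ uniform points in $[0,x-\ell]$ is binomial with parameters $j$ and $(x-\ell)/x$. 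Mixing over the Poisson$(\lambda p x)$ law of $j$ thins to Poisson$(\lambda p (x-\ell))$.

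It then remains to integrate. Substitute $y=x-\ell$, so that $e^{-(\mu-\lambda)x}=e^{-(\mu-\lambda)\ell}e^{-(\mu-\lambda)y}$. This gives
\[
\mathbb{P}(N_{aa}=k)=e^{-(\mu-\lambda)\ell}\int_0^\infty (\mu-\lambda)e^{-(\mu-\lambda)y}e^{-\lambda p y}\frac{(\lambda p y)^k}{k!}\,dy
=e^{-(\mu-\lambda)\ell}\frac{\mu-\lambda}{\mu-\lambda+\lambda p}\left(\frac{\lambda p}{\mu-\lambda+\lambda p}\right)^k,
\]
using the Gamma integral $\int_0^\infty y^k e^{-cy}\,dy=k!/c^{k+1}$ with $c=\mu-\lambda(1-p)$. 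Since $\frac{\mu-\lambda}{\mu-\lambda(1-p)}=1-\frac{\lambda p}{\mu-\lambda(1-p)}$, this is the claimed expression.

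The only genuinely delicate step is justifying that conditioning on $\mathcal{S}=x$ leaves the post-arrival infectious arrival process Poisson with rate $\lambda p$. I would argue it directly from the \fcfs structure: $\mathcal{S}=S+\sum_{m\le Q}R_m$ depends only on quantities fixed at or before the tagged customer's arrival, and these are independent of future arrivals in the Poisson model. The computation can be cross-checked in two ways. Summing over $k\ge 1$ should recover Theorem~\ref{thm:after-arrival}, and the resulting law should match Theorem~\ref{thm:exactly-k-upon-arrival}, consistent with Conjecture~\ref{conj:symmetry} in the $M/M/1$ case.
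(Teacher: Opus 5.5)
Your proof is correct and is essentially the paper's argument. The paper additionally conditions on $Q=r$, writes the sojourn density as Erlang$(r+1,\mu)$ and the post-arrival infectious count as Poisson$(\lambda p(x-\ell))$ on $x\ge\ell$, and then sums over the geometric law of $Q$; this merely reproduces the exponential density $(\mu-\lambda)e^{-(\mu-\lambda)x}$ that you take directly from Lemma~\ref{lem:prob-sojourn-x}, before the same Gamma integral (your explicit justification that conditioning on $\mathcal{S}=x$ leaves later arrivals Poisson under first-come, first-served fills in a step the paper uses without comment).
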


\begin{proof}
    \begin{align*}
        \mathbb{P}(N_{aa} = k) 
        &= \sum_{r=0}^{\infty}\mathbb{P}(Q=r)\int_{\ell}^{\infty}\mathbb{P}(\mathcal{S}=x|Q=r)\mathbb{P}(M_I(x-l) = k|\mathcal{S}=x,Q=r)dx\\
        &= \sum_{r=0}^{\infty}\left(\frac{\lambda}{\mu}\right)^r\left(1-\frac{\lambda}{\mu}\right)\int_{\ell}^{\infty}\frac{\mu^{r+1}x^re^{-\mu x}}{r!}\frac{e^{-\lambda p (x-\ell)}\left(\lambda p(x-\ell)\right)^k}{k!}dx\\
        &= \frac{\mu-\lambda}{k!}\int_{\ell}^{\infty}e^{-\mu x}e^{-\lambda p (x-\ell)}\left(\lambda p(x-\ell)\right)^k\sum_{r=0}^{\infty}\frac{\left(\lambda x\right)^r}{r!}dx\\
        &= \frac{\mu-\lambda}{k!}\int_{\ell}^{\infty}e^{-(\mu-\lambda) x}e^{-\lambda p (x-\ell)}\left(\lambda p(x-\ell)\right)^kdx\\
        &= \frac{\mu-\lambda}{k!}\left(\frac{k!(\lambda p)^k}{(\mu-\lambda(1-p))^{k+1}}e^{-(\mu-\lambda)\ell}\right)\\
        &= \frac{\mu-\lambda}{\mu-\lambda(1-p)}\left(\frac{\lambda p}{\mu-\lambda(1-p)}\right)^k e^{-(\mu-\lambda)\ell}.
    \end{align*}
\end{proof}

\subsubsection{Probability of $k$ total infections}

We now find the joint probability of $k$ infection transmissions combined between the upon and after arrivals. In doing so, we leverage the conditional independence of the two cases given a fixed sojourn time.

First, we determine the likelihood that exactly $m$ people infect a tagged customer upon arrival, given sojourn time $x.$
\begin{lemma}\label{lem:num-upon-given-sojourn}
For a customer arriving to the $M/M/1$ queue in steady state with sojourn time $x$, the likelihood that they are infected by $m$ customers in the system upon their arrival is,
    $$\mathbb{P}(N_{ua} = m|\mathcal{S} = x) =e^{-\lambda p(x-\ell)}\frac{(\lambda p(x -\ell))^m}{m!}.$$
\end{lemma}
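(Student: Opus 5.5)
The plan is to reduce the statement to a Poisson thinning argument, reusing the snapshot idea from the proof of Lemma~\ref{lem:closest-infec-upon-cant-infect}. We condition on $\mathcal{S}=x$. If $x<\ell$, the tagged customer cannot be infected by anyone, so $N_{ua}=0$; the formula is meant for $x\geq \ell$, which we assume throughout. A customer in the queue upon the tagged arrival infects the tagged customer exactly when two things hold: the customer is infectious, and their remaining sojourn time is at least $\ell$. Under FCFS, every customer ahead of the tagged one departs before the tagged customer does. So a customer ahead has remaining sojourn at least $\ell$ precisely when they are still present at time $\ell$ after the tagged arrival. At that moment the tagged customer has remaining sojourn $x-\ell$. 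Hence $N_{ua}$ equals the number of infectious customers still ahead of the tagged customer at time $\ell$.

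The key step is to identify the law of the number of customers still ahead at time $\ell$, given $\mathcal{S}=x$. I will argue, as the paper does in Lemma~\ref{lem:closest-infec-upon-cant-infect}, that this number equals in distribution the queue length seen by a customer with remaining sojourn time $x-\ell$. Lemma~\ref{lem:queue-given-sojourn}, in its remaining-sojourn version, then gives a Poisson distribution with mean $\lambda(x-\ell)$.

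To justify this more carefully, I would condition on $Q=r$. The sojourn time is then a sum of $r+1$ i.i.d.\ $\mathrm{Exp}(\mu)$ variables, where memorylessness handles the residual service $R_1$. Given this sum equals $x$, the $r$ departure epochs ahead are distributed as $r$ uniform order statistics on $[0,x]$. Each customer independently survives past $\ell$ with probability $(x-\ell)/x$. This gives a binomial count $\mathrm{Bin}(r,(x-\ell)/x)$. Mixing over the Poisson$(\lambda x)$ law of $Q$ given $\mathcal{S}=x$ from Lemma~\ref{lem:queue-given-sojourn} yields Poisson$(\lambda(x-\ell))$. This is the step I expect to be the main obstacle, since the paper only asserts it heuristically; the uniform order-statistics computation should make it rigorous.

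Finally, infectious types are assigned i.i.d.\ with probability $p$, independently of arrival and service times. Thinning the Poisson$(\lambda(x-\ell))$ count by $p$ gives that $N_{ua}\mid\mathcal{S}=x$ is Poisson$(\lambda p(x-\ell))$. Equivalently, combining the two thinnings directly, the count is $\sum_r \frac{e^{-\lambda x}(\lambda x)^r}{r!}\binom{r}{m}\left(\frac{p(x-\ell)}{x}\right)^m\left(1-\frac{p(x-\ell)}{x}\right)^{r-m}$. Summing this series gives $e^{-\lambda p(x-\ell)}\frac{(\lambda p(x-\ell))^m}{m!}$, which is the claimed formula. As a consistency check, the case $m=0$ recovers Lemma~\ref{lem:closest-infec-upon-cant-infect}.
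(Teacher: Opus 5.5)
Your proof is correct. Its skeleton matches the paper's: look at the queue at time $\ell$, show that the number of customers still ahead is $\mathrm{Poisson}(\lambda(x-\ell))$, then thin by $p$. You carry out both steps differently, though.

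For the survivor count, the paper writes $\mathbb{P}(Q=r\mid\mathcal{R}=x-\ell)$ and invokes the remaining-sojourn clause of Lemma~\ref{lem:queue-given-sojourn}. That clause is justified only by the remark that $\mathcal{R}$ and $\mathcal{S}$ have the same marginal law. This remark does not by itself identify the conditional law of the number still ahead at time $\ell$ given $\mathcal{S}=x$. Your argument does prove it: given $Q=r$ and $\mathcal{S}=x$, the departure epochs ahead are uniform order statistics on $[0,x]$. So the survivor count is $\mathrm{Bin}(r,(x-\ell)/x)$, and mixing over the $\mathrm{Poisson}(\lambda x)$ law of $Q$ gives $\mathrm{Poisson}(\lambda(x-\ell))$. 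Equivalently, given $\mathcal{S}=x$, the departure epochs of the customers ahead form a rate-$\lambda$ Poisson process on $[0,x)$.

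For the thinning, the paper conditions on $I_{\max}=j$, uses Lemma~\ref{lem:cond-max-infec}, and collapses the double sum with the hockey-stick identity. That is a roundabout way of recovering $\mathrm{Bin}(r,p)$. Your direct thinning, justified by the independence of customer types from arrival and service times, is shorter and more transparent.

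One wording point: the survival events of the ordered customers are nested, not independent. The count is still $\mathrm{Bin}(r,(x-\ell)/x)$ because it equals the number of \emph{unsorted} uniforms exceeding $\ell$. The types can be attached to those unsorted uniforms because they are i.i.d.\ and independent of them, which is what justifies your combined $\mathrm{Bin}\bigl(r,p(x-\ell)/x\bigr)$ series.
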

\begin{proof}
    Here, we recognize that any customer who departs within the first $\ell$ units of time after this tagged customer arrives to the system can not possibly infect them. Thus, utilizing the memoryless property of the exponential distribution, it is useful to only consider a ``snapshot'' of the queue after $\ell$ units of time have already passed, or the state of the queue when the tagged customer's remaining sojourn time is $x-\ell$.

    \begin{align*}
        \mathbb{P}(N_{ua} = m|\mathcal{S} = x)  
        &=\sum_{r=m}^{\infty}\mathbb{P}(Q=r|\mathcal{R} = x-\ell)\sum_{j=m}^{r}\mathbb{P}(I_{\max} = j|Q=r)\mathbb{P}(N_{ua} = m|I_{\max} = j)\intertext{Because we consider the queue as now only consisting of those customers whose \textcolor{black}{remaining} sojourn time was at least $\ell$ upon the tagged customer's arrival, any infectious customers amongst the new ``first'' $j$ must successfully transmit their infection. Further, we have assumed that customer $j$ is infected. These facts and Lemma~\ref{lem:queue-given-sojourn} give us}
        &=\sum_{r=m}^{\infty}\frac{e^{-\lambda (x-\ell)}(\lambda (x-\ell))^r}{r!}\sum_{j=m}^{r}p(1-p)^{r-j}{m-1 \choose j-1} p^{m-1}(1-p)^{j-m}\\
        &=\left(\frac{p}{1-p}\right)^me^{-\lambda (x-\ell)}\sum_{r=m}^{\infty}\frac{(\lambda (x-\ell)(1-p))^r}{r!}\sum_{j=m}^{r}{m-1 \choose j-1}\\
        &=\left(\frac{p}{1-p}\right)^me^{-\lambda (x-\ell)}\sum_{r=m}^{\infty}\frac{(\lambda (x-\ell)(1-p))^r}{r!}\frac{(r-m+1){r \choose m-1}}{m}\\
        &=\left(\frac{p}{1-p}\right)^me^{-\lambda (x-\ell)}\frac{e^{\lambda(x-\ell)(1-p)}(\lambda(x-\ell)(1-p))^m}{m!}\\
        &=e^{-\lambda p (x-\ell)}\frac{(\lambda p (x-\ell))^m}{m!}.
    \end{align*}
\end{proof}

We find that the conditional probability of infections from arrivals after the tagged customer achieves an identical distribution, which we expect under Conjecture~\ref{conj:symmetry}.

\begin{lemma}\label{lem:num-after-given-sojourn}
For a customer arriving to the $M/M/1$ queue in steady state with sojourn time $x$, the likelihood that they are infected by $m$ customers who arrive to the system after them is,
    $$\mathbb{P}(N_{aa} = m|\mathcal{S} = x) = e^{-\lambda p(x-\ell)}\frac{(\lambda p(x -\ell))^m}{m!}.$$
\end{lemma}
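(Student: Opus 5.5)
We need $\mathbb{P}(N_{aa}=m\mid\mathcal{S}=x)$. The plan is to reduce the event to a Poisson count of infectious arrivals in a window of length $x-\ell$, using the first-come, first-served structure and Poisson thinning.

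First we identify the event. Condition on $\mathcal{S}=x$ and take the tagged customer's arrival as time $0$, so they depart at time $x$. If $x<\ell$ then $N_{aa}=0$ trivially, so assume $x\ge \ell$. Under first-come, first-served, every customer arriving after the tagged one at time $t\in[0,x]$ is still in the system when the tagged customer departs. Hence that customer's overlap with the tagged customer is exactly $(x-t)^+$. An infectious later arrival therefore infects the tagged customer precisely when $x-t\ge \ell$, i.e. $t\in[0,x-\ell]$. It follows that $N_{aa}=N_I(x-\ell)$, the number of infectious arrivals in $[0,x-\ell]$. This matches the observation made just before the theorem on $\mathbb{P}(N_{aa}=k)$, and is the same reduction used in Lemma~\ref{lem:splitCompQ} and in the proof of Theorem~\ref{thm:total-infection-prob}.

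Second, we need independence of the future arrivals from the sojourn time. The tagged customer's sojourn time $\mathcal{S}=S+\sum_{i}R_i$ is determined entirely by the queue content at arrival, $Q$, and by service times. Future arrivals cannot affect it, since later customers never overtake the tagged one under first-come, first-served. The arrival process after time $0$ is a Poisson process independent of the past (PASTA plus the independent-increments property), and it is independent of the service times. So conditioning on $\mathcal{S}=x$ leaves the post-arrival process unchanged. Thinning with probability $p$ makes the infectious arrivals a Poisson process of rate $\lambda p$. Therefore
\[
\mathbb{P}(N_{aa}=m\mid \mathcal{S}=x)=\mathbb{P}(N_I(x-\ell)=m)=e^{-\lambda p(x-\ell)}\frac{(\lambda p(x-\ell))^m}{m!}.
\]

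The only genuinely delicate step is the independence argument in the second paragraph. The claim there is that $\mathcal{S}$ is a function of $(Q,S,R_1,\dots,R_Q)$ alone, and that all of these are independent of the future arrival stream. The point to state carefully is that later arrivals cannot change $\mathcal{S}$ under first-come, first-served. With that in hand, the rest is the standard Poisson count. As a consistency check, integrating the formula against the density in Lemma~\ref{lem:prob-sojourn-x} over $x\ge\ell$ should recover the earlier closed form for $\mathbb{P}(N_{aa}=k)$.
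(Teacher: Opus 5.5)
Your proof is correct and follows the same route as the paper, which simply observes that an infecting later arrival must enter within the first $x-\ell$ units of the tagged customer's sojourn. Hence $N_{aa}$ is the Poisson$(\lambda p(x-\ell))$ count of infectious arrivals in that window. You also make explicit three points the paper leaves implicit: the FCFS overlap formula $(x-t)^+$, the independence of the thinned post-arrival Poisson stream from $\mathcal{S}$, and the restriction to $x\ge\ell$ (for $x<\ell$ one has $N_{aa}=0$, and the stated formula is not a probability).
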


\begin{proof}
    This follows immediately from recognizing that everyone who infects the tagged customer must arrive at least $\ell$ units of time before the customer exits the system, that is within timeframe $x-\ell$.
\end{proof}

We can now combine these lemmas to find the total probability a susceptible customer is infected by $k$ individuals.

\begin{theorem}\label{thm:exactly-k-infections}
Consider any $k\geq 1.$ For a susceptible arrival to the $M/M/1$ queue in steady state, the probability that exactly $k$ infectious customers transmit infection to them is,
\begin{eqnarray}\label{eqn:exactkinfec}
    \mathbb{P}(N_{I} = k) =\frac{\mu-\lambda}{\mu-\lambda+2\lambda p}\left(\frac{2\lambda p}{\mu-\lambda+2\lambda p}\right)^ke^{-(\mu-\lambda)\ell}.
\end{eqnarray}
\end{theorem}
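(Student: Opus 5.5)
We condition on the tagged susceptible customer's sojourn time $\mathcal{S}=x$ and use the conditional independence of the upon-arrival and after-arrival infection counts given $x$. This is the same device used in the proof of Theorem~\ref{thm:total-infection-prob}, and it is what Conjecture~\ref{conj:symmetry} predicts for $M/M/1$.

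If $x<\ell$, no infection can occur, so $N_I=0$ and this range contributes nothing for $k\ge 1$. For $x\ge \ell$, Lemmas~\ref{lem:num-upon-given-sojourn} and~\ref{lem:num-after-given-sojourn} each give a Poisson law with mean $\lambda p(x-\ell)$ for $N_{ua}$ and $N_{aa}$ respectively. We justify their conditional independence as follows. Given $\mathcal{S}=x$, the upon-arrival count depends only on the snapshot of customers ahead of the tagged customer that survive past time $\ell$, and by Lemma~\ref{lem:queue-given-sojourn} and memorylessness this is a Poisson thinning of the queue present at that moment. The after-arrival count depends only on the infectious arrival stream on $[0,x-\ell]$, which is independent of the past.

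Given independence, $N_I=N_{ua}+N_{aa}$ is conditionally Poisson with mean $2\lambda p(x-\ell)$. This gives
\begin{align*}
\mathbb{P}(N_I=k)=\int_\ell^\infty e^{-2\lambda p(x-\ell)}\frac{(2\lambda p(x-\ell))^k}{k!}(\mu-\lambda)e^{-(\mu-\lambda)x}\,dx,
\end{align*}
where the sojourn density comes from Lemma~\ref{lem:prob-sojourn-x}. Substituting $y=x-\ell$ pulls out the factor $e^{-(\mu-\lambda)\ell}$. What remains is the Gamma integral
\begin{align*}
\int_0^\infty y^k e^{-(\mu-\lambda+2\lambda p)y}\,dy=\frac{k!}{(\mu-\lambda+2\lambda p)^{k+1}},
\end{align*}
and collecting factors yields $\frac{\mu-\lambda}{\mu-\lambda+2\lambda p}\bigl(\frac{2\lambda p}{\mu-\lambda+2\lambda p}\bigr)^k e^{-(\mu-\lambda)\ell}$, which is Equation~\ref{eqn:exactkinfec}. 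As sanity checks, summing over $k\ge1$ should recover Theorem~\ref{thm:total-infection-prob}, and setting one rate to zero should recover the marginal results of the previous subsections.

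The main obstacle is the conditional independence step. Conditioning on $\mathcal{S}=x$ fixes the total work ahead, so the queue ahead and the future arrivals must be shown not to interact through that constraint. We would handle this by noting that, given $\mathcal{S}=x$, the upon-arrival count is a function of the departure epochs of the customers ahead within $[0,x]$. By Lemma~\ref{lem:queue-given-sojourn}, those epochs behave like Poisson points of rate $\lambda$ on the sojourn interval (time-reversal of $M/M/1$). The future arrivals, in turn, form an independent Poisson process on $[0,x]$. Independent thinning with probability $p$ of each stream then gives two independent Poisson counts on the subinterval of length $x-\ell$. If full rigor there proves delicate, we would fall back on the independence already asserted in the proof of Theorem~\ref{thm:total-infection-prob}.
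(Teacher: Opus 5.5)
Your proposal is correct and is essentially the paper's proof: condition on $\mathcal{S}=x\ge\ell$, multiply the conditional Poisson laws from Lemmas~\ref{lem:num-upon-given-sojourn} and~\ref{lem:num-after-given-sojourn} using conditional independence, and integrate against the exponential sojourn density. The only differences are cosmetic: you add the two Poissons directly and use a complete Gamma integral, where the paper convolves via the binomial sum and writes an incomplete-gamma antiderivative. The step you flag as the main obstacle is in fact immediate, because the typed arrival stream after time $0$ is independent of the pair $(N_{ua},\mathcal{S})$, and that alone gives conditional independence given $\mathcal{S}$; so your argument correctly supplies a justification the paper only asserts.
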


\begin{proof}
    Utilizing the fact that the total number of people who infect the tagged customer is simply the sum of the number who infect them upon arrival and the number who infect them after arrival, since those sets partition all those who infect the tagged customer, we find that
    \begin{align*}
        \mathbb{P}(N_{I} = k) 
        &= \sum_{m=0}^k \mathbb{P}(N_{ua} = m, N_{aa} = k-m)\\
        &= \sum_{m=0}^k\int_{\ell}^{\infty}\mathbb{P}(\mathcal{S} = x)\mathbb{P}(N_{ua} =m, N_{aa} = k-m|\mathcal{S} = x) dx \intertext{So by Lemmas~\ref{lem:num-upon-given-sojourn} and~\ref{lem:num-after-given-sojourn} and Lemma~\ref{lem:prob-sojourn-x},}
        &=\int_{\ell}^{\infty}(\mu -\lambda)e^{-(\mu-\lambda)x}e^{-2\lambda p(x-\ell)}\sum_{m=0}^k\frac{(\lambda p(x -\ell))^{k}}{m!(k-m)!}dx\\
        &=\int_{\ell}^{\infty}(\mu -\lambda)e^{-(\mu-\lambda)x}e^{-2\lambda p(x-\ell)}\frac{(2\lambda p(x -\ell))^{k}}{k!}dx\\
        &=\frac{(\mu-\lambda)(2\lambda p)^k}{k!}\int_{\ell}^{\infty}e^{-(\mu-\lambda)x}e^{-2\lambda p(x-\ell)}(x -\ell)^{k}dx\\
        &=\frac{(\mu-\lambda)(2\lambda p)^k}{k!}\left(\frac{-e^{-(\mu-\lambda)\ell}}{(\mu-\lambda+2\lambda p)^{k+1}}\Gamma(k+1, (\mu-\lambda+2\lambda p)(x-\ell))\right)\Bigg|_{x=\ell}^{x=\infty}\intertext{making use of the fact that $k+1\in\mathbb{Z}^{+}$, we can express the lower incomplete gamma function in summatio form as}
        &=\frac{(\mu-\lambda)e^{-(\mu-\lambda)\ell}}{\mu-\lambda+2\lambda p}\left(\frac{2\lambda p}{\mu-\lambda+2\lambda p}\right)^k\left(\sum_{n=0}^k\frac{((\mu-\lambda+2\lambda p)(x-\ell))^n}{n! e^{(\mu-\lambda+2\lambda p)(x-\ell)}}\right)\Bigg|^{x=\ell}_{x=\infty}\intertext{Recognizing that in this context $0^0 = 1$, we have then,}
        &=\frac{(\mu-\lambda)e^{-(\mu-\lambda)\ell}}{\mu-\lambda+2\lambda p}\left(\frac{2\lambda p}{\mu-\lambda+2\lambda p}\right)^k\left(1 - 0\right)\\
        &= \frac{\mu-\lambda}{\mu-\lambda+2\lambda p}\left(\frac{2\lambda p}{\mu-\lambda+2\lambda p}\right)^ke^{-(\mu-\lambda)\ell}.
    \end{align*}
\end{proof}

\subsection{At least $k$ infections}
It is also of interest to know not just the likelihood of a specific number of infections, but the likelihood of getting at least that many. As a way of considering severity of infection, this would allow us to know the probability of getting at least some degree of severity.

\begin{figure}[H]
\centering
\makebox[\linewidth][c]{
\begin{subfigure}[b]{0.6\textwidth}
        \centering
    \includegraphics[width=.98\textwidth]{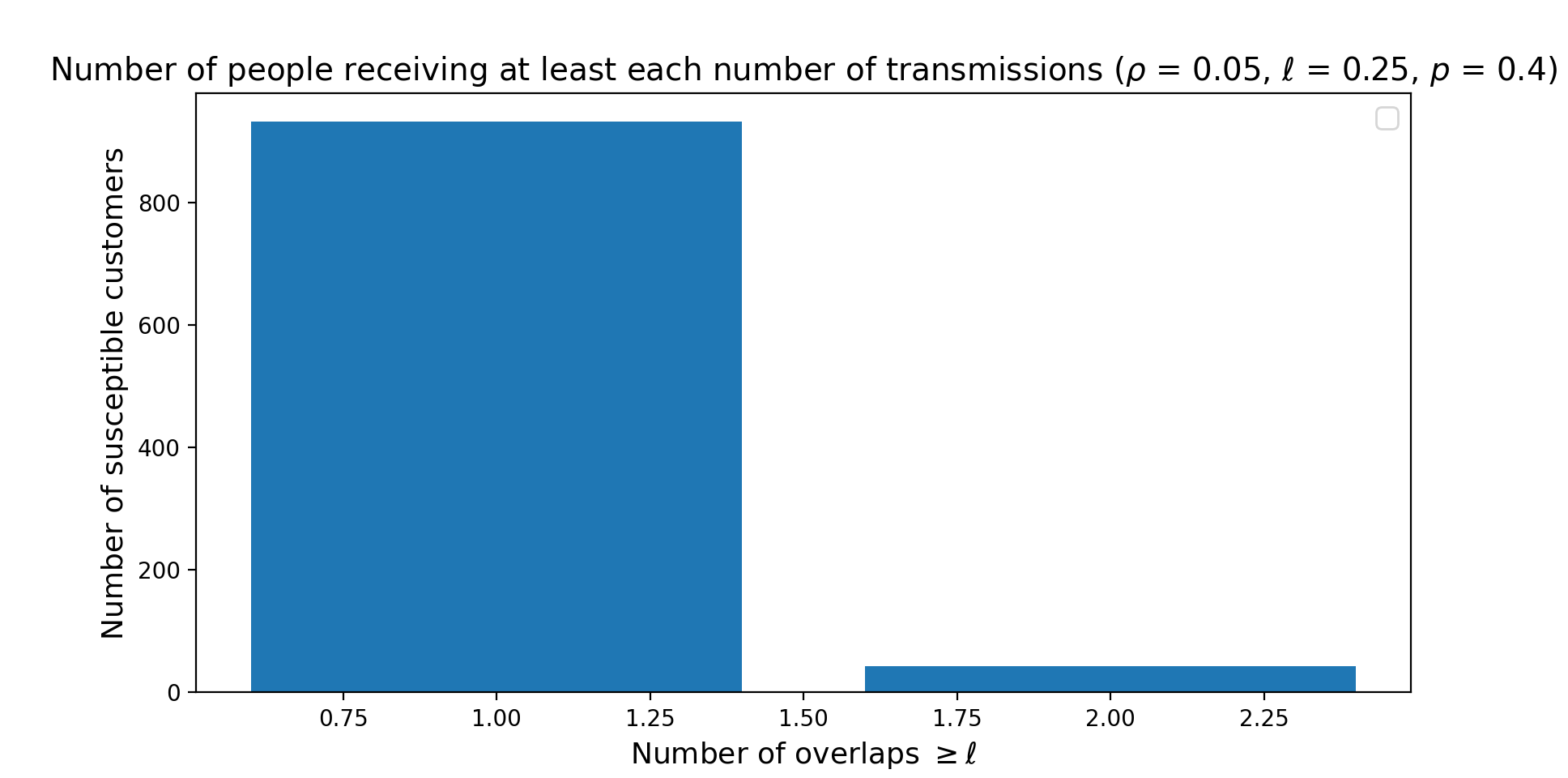}
    \captionsetup{width=.65\linewidth}
\caption{903 susceptible people who became infected. Experimentally, at least two people infected them 4.76\% of the time.}
\end{subfigure}
\begin{subfigure}[b]{0.6\textwidth}
    \centering
\includegraphics[width=.98\textwidth]{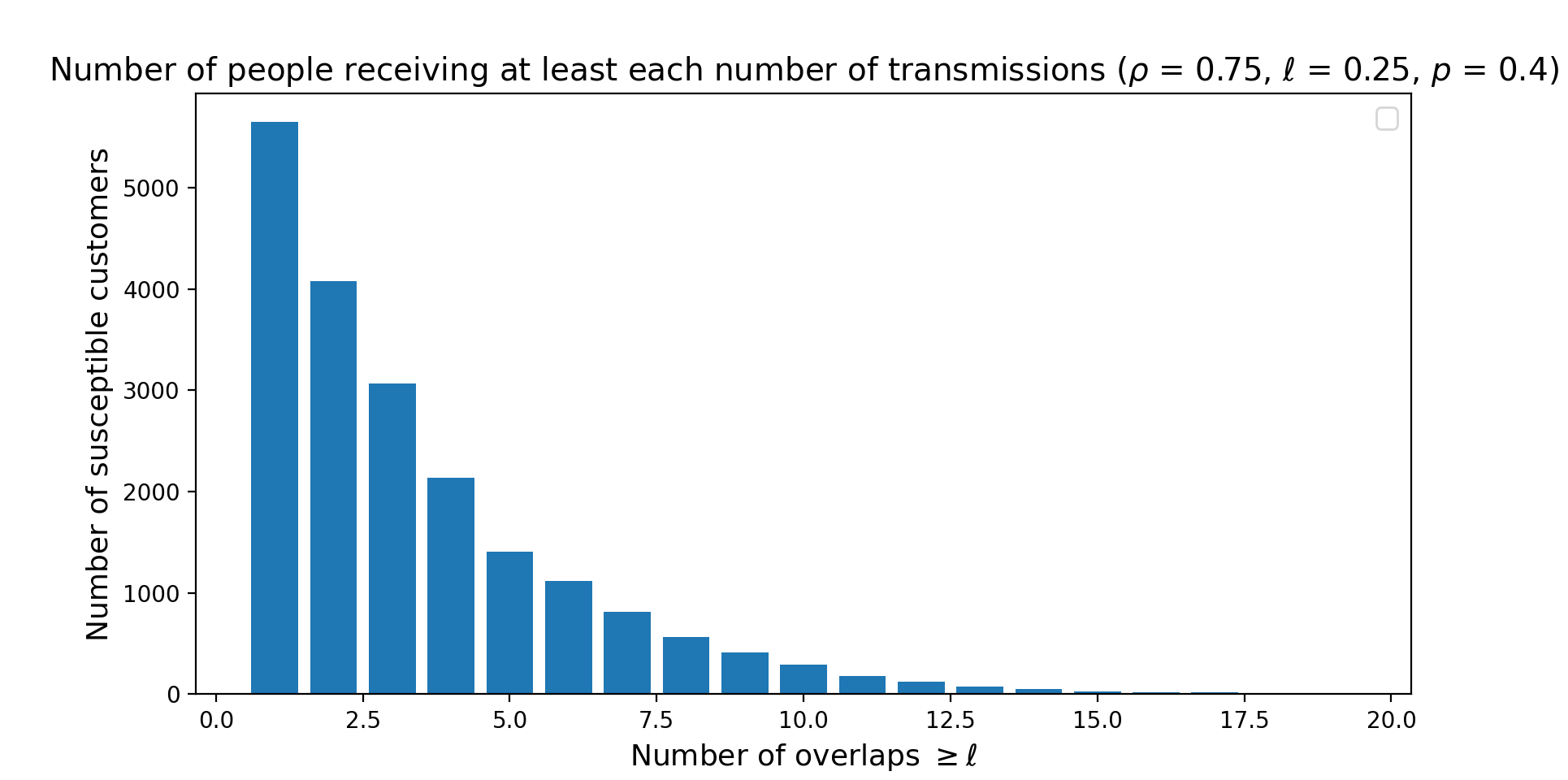}
\captionsetup{width=.65\linewidth}
\caption{19,842 susceptible people who became infected. Experimentally, at least two people infected them 71.04\% of the time.}
\end{subfigure}}
\caption{Simulating over 50,000 customers, we confirm that, given a susceptible customer became infected at all, the likelihood of being infected at least twice is the complement of being infected only once.}
\label{fig:atleasttransmissions}
\end{figure}

We again begin by recognizing a trivial case: requiring at least 0 infections is the trivial case of allowing any outcome that could happen to a susceptible customer entering the system.

\subsubsection{Probability of at least $k$ infections upon arrival}

The probability that at least $k$ infections occur from people in front of the tagged susceptible customer in the queue follows straightforwardly along the same logic as in Theorem~\ref{thm:exactly-k-upon-arrival}, but where we now allow for more than $k$ customers to possibly infect them.

\begin{theorem}
For a susceptible arrival to the $M/M/1$ queue in steady state, and for any $k \geq 1$, the probability that at least $k$ customers in the queue upon their arrival infect them is,
$$\mathbb{P}(N_{ua} \geq k) = \left(\frac{\lambda p}{\mu-\lambda(1-p)}\right)^ke^{-(\mu-\lambda)\ell}.$$
\end{theorem}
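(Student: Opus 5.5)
The quickest route is to sum the exact-$k$ distribution from Theorem~\ref{thm:exactly-k-upon-arrival} over all $m\ge k$. Write $q = \frac{\lambda p}{\mu-\lambda(1-p)}$. Since $\mu>\lambda$ we have $q<1$, because $\mu-\lambda(1-p) > \lambda p$ is equivalent to $\mu>\lambda$. For every $m\geq 1$, Theorem~\ref{thm:exactly-k-upon-arrival} gives $\mathbb{P}(N_{ua}=m) = (1-q)q^m e^{-(\mu-\lambda)\ell}$. Because $k\ge 1$, every term with $m\ge k$ falls under that theorem, so
\begin{align*}
\mathbb{P}(N_{ua}\geq k) = \sum_{m=k}^{\infty}(1-q)q^m e^{-(\mu-\lambda)\ell} = (1-q)\frac{q^k}{1-q}e^{-(\mu-\lambda)\ell} = q^k e^{-(\mu-\lambda)\ell},
\end{align*}
which is the claimed formula.

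As a consistency check, take $k=1$. The formula then reduces to $\mathbb{P}(I_{ua})$ from Theorem~\ref{thm:upon-arrival}, and it agrees with the lemma giving $\mathbb{P}(N_{ua}=0)$, since $\mathbb{P}(N_{ua}\ge1) = 1-\mathbb{P}(N_{ua}=0)$.

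The text says it wants to follow "the same logic" as Theorem~\ref{thm:exactly-k-upon-arrival}, so I would also sketch a direct derivation. I would condition on $Q=r$ and on the event $\mathbf{1}_{\tilde{\mathcal{R}_{j}} > \ell}$ (Lemma~\ref{lem:first-sojourn}). Given these, Lemma~\ref{lem:exactly-k-inner} makes $N_{ua}$ binomial with parameters $r-j+1$ and $p$, so I would sum its tail over values at least $k$.

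A cleaner version of the same idea uses Lemma~\ref{lem:num-upon-given-sojourn}. Conditional on $\mathcal{S}=x\ge\ell$, $N_{ua}$ is Poisson with mean $\lambda p(x-\ell)$. Integrating this against the density in Lemma~\ref{lem:prob-sojourn-x} turns out to give a geometric law. Substitute $y=x-\ell$; this pulls out the factor $e^{-(\mu-\lambda)\ell}$, and $y$ becomes exponential with rate $\mu-\lambda$. A Poisson variable whose mean is $\lambda p$ times an exponential with rate $\mu-\lambda$ is geometric, and its tail beyond $k$ is $\left(\frac{\lambda p}{\lambda p+\mu-\lambda}\right)^k$. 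Since $\lambda p+\mu-\lambda = \mu-\lambda(1-p)$, this is exactly $q^k$.

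Neither route has a real obstacle; the only points needing care are these:
\begin{itemize}
\item The exact-$k$ formula holds only for $k\ge1$, so the sum must start at $m=k\ge1$ and must not include the $m=0$ atom.
\item Interchanging the sum and the integral is justified by nonnegativity of the terms (Tonelli).
\end{itemize}
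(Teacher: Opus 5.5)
Your proof is correct, but your main route is not the paper's. The paper never invokes Theorem~\ref{thm:exactly-k-upon-arrival}; instead it redoes that theorem's computation with the binomial tail carried along. It conditions on $Q=r$ and on the index $j$ of the first customer with remaining sojourn at least $\ell$ (Lemma~\ref{lem:first-sojourn}), and sums the binomial law of Lemma~\ref{lem:exactly-k-inner} over $i\ge k$. After interchanging sums, the $r$-sum becomes the negative-binomial series $\sum_{s\ge i}\binom{s}{i}z^s=z^i/(1-z)^{i+1}$ with $z=\lambda(1-p)/\mu$, and the $j$-sum becomes $e^{\lambda\ell}$. What remains is $\frac{\mu-\lambda}{\mu-\lambda(1-p)}e^{-(\mu-\lambda)\ell}\sum_{i\ge k}q^i$ in your notation. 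In effect the paper re-derives the exact-$i$ law inside the sum. You take that law as given and do only the final geometric-tail step, which is exactly how the paper itself later handles $N_I$ in Theorem~\ref{thm:at-least-k}. The paper's version is self-contained, needing only the two lemmas; yours is shorter and makes plain that the statement is just the tail of an already-established geometric law.

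Your Poisson-mixture route is genuinely different and more conceptual. Conditioning on the tagged sojourn rather than the queue length, $N_{ua}$ is Poisson with mean $\lambda p(\mathcal{S}-\ell)^+$. By memorylessness, $(\mathcal{S}-\ell)^+$ is $0$ with probability $1-e^{-(\mu-\lambda)\ell}$ and otherwise $\mathrm{Exp}(\mu-\lambda)$, so $N_{ua}$ is a zero-inflated geometric. This explains why the answer factors as $\mathbb{P}(\mathcal{S}>\ell)$ times a geometric tail, it avoids the combinatorics entirely, and it is the form underlying the paper's $M/G/1$ approximation and random-threshold remarks.

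Its one dependency, Lemma~\ref{lem:num-upon-given-sojourn}, is true. Given $\mathcal{S}=x$, $Q$ is Poisson$(\lambda x)$ by Lemma~\ref{lem:queue-given-sojourn}, and the departure epochs of the customers ahead are i.i.d.\ uniform on $[0,x]$. Hence those departing after time $\ell$ number Poisson$(\lambda(x-\ell))$, and thinning by $p$ gives the claim. Note that the paper's displayed derivation of this lemma has an index slip: its coefficient should read $\binom{j-1}{m-1}$.
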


\begin{proof}
\begin{align*}
        &\mathbb{P}(N_{ua} \geq k) \\
        &= \sum_{r=k}^{\infty}\mathbb{P}(Q = r)\sum_{j=1}^{r-k+1}\mathbb{P}\left(\mathbf{1}_{\tilde{\mathcal{R}_{j}} > \ell}|Q=r\right)\sum_{i=k}^{r-j+1}\mathbb{P}\left(N_{ua} = i|Q=r,\mathbf{1}_{\tilde{\mathcal{R}_{j}} > \ell}\right)\intertext{Again leveraging Lemmas~\ref{lem:first-sojourn} and~\ref{lem:exactly-k-inner}, we get}
        &= e^{-\mu\ell}\frac{\mu-\lambda}{\mu}\sum_{j=1}^{\infty}\frac{(\mu\ell)^{j-1}}{(j-1)!}\sum_{r=k+j-1}^{\infty}\left(\frac{\lambda}{\mu}\right)^r\sum_{i=k}^{r-j+1}{r-j+1 \choose i}p^i(1-p)^{r-j+1-i}\\
        &= e^{-\mu\ell}\frac{\mu-\lambda}{\mu}\sum_{j=1}^{\infty}\frac{(\mu\ell)^{j-1}}{(j-1)!}\sum_{i=k}^{\infty}\left(\frac{p}{1-p}\right)^i\sum_{r=i+j-1}^{\infty}{r-j+1 \choose i}\left(\frac{\lambda}{\mu}\right)^r(1-p)^{r-j+1}\\
        &= e^{-\mu\ell}\frac{\mu-\lambda}{\mu}\sum_{j=1}^{\infty}\frac{(\mu\ell)^{j-1}}{(j-1)!}\sum_{i=k}^{\infty}\left(\frac{p}{1-p}\right)^i\left(\frac{\mu^2}{\lambda(\mu-\lambda(1-p))}\left(\frac{\lambda}{\mu}\right)^{i+j}\left(\frac{\mu(1-p)}{\mu-\lambda(1-p)}\right)^i\right)\\
        &= e^{-\mu\ell}\frac{\mu-\lambda}{\mu-\lambda(1-p)}\sum_{j=1}^{\infty}\frac{(\lambda\ell)^{j-1}}{(j-1)!}\sum_{i=k}^{\infty}\left(\frac{\lambda p}{\mu-\lambda(1-p)}\right)^i\\
        &= e^{-\mu\ell}\frac{\mu-\lambda}{\mu-\lambda(1-p)}\left(e^{\lambda\ell}\right)\left(\frac{\mu-\lambda(1-p)}{\mu-\lambda}\left(\frac{\lambda p}{\mu-\lambda(1-p)}\right)^k\right)\\
        &= \left(\frac{\lambda p}{\mu-\lambda(1-p)}\right)^ke^{-(\mu-\lambda)\ell}.
    \end{align*}
\end{proof}

\subsubsection{Probability of at least $k$ infections after arrival}

For the case of the tagged susceptible customer getting infected by at least $k$ arrivals after them, we recognize that this no longer requires exactly $k$ arrivals \textcolor{black}{while the tagged customer's remaining sojourn time is at least $\ell$}, but rather requires that the $k^{th}$ arrival must have occurred by then (and by placing no restrictions on potential $k+i^{th}$ arrivals, we implicitly allow more than $k$ infection transmissions).

\begin{lemma}\label{lem:uniform-k-stat}
Consider a susceptible arrival to the $M/M/1$ queue in steady state, with sojourn time $x$. For some $k \geq 1$, then for any $j \geq k$, if $j$ infectious arrivals occurred during their sojourn, the likelihood that at least $k$ of them transmitted infection to the tagged customer is,
$$\mathbb{P}(A_{(k)} \leq x-\ell|M_I = j, \mathcal{S}=x) = \left(\frac{\ell}{x}\right)^j\sum_{i=k}^j{j \choose i}\left(\frac{x-\ell}{\ell}\right)^i.$$
\end{lemma}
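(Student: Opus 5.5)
Conditioning on $M_I=j$ and $\mathcal{S}=x$ reduces this to an order-statistics computation, in the same way as Lemma~\ref{lem:integral-prob}. Given $\mathcal{S}=x$, the infectious arrivals after the tagged customer form a Poisson process of rate $\lambda p$ on $[0,x]$. This process is independent of the sojourn time, since sojourn depends only on the queue ahead and the service times. Hence, conditional additionally on $M_I=j$, the arrival epochs $A_1,\ldots,A_j$ are distributed as the order statistics of $j$ i.i.d.\ $U(0,x)$ random variables.

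An infectious arrival transmits infection exactly when it arrives by time $x-\ell$, because its overlap with the tagged customer is then $x-A_i\geq \ell$. So ``at least $k$ of them transmitted infection'' is precisely the event $A_{(k)}\leq x-\ell$.

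Each of the $j$ uniform points independently falls in $[0,x-\ell]$ with probability $q=(x-\ell)/x$, where we take $x\geq \ell$; otherwise the probability is trivially $0$. The number of points in that interval is therefore $\mathrm{Binomial}(j,q)$, and
$$\mathbb{P}(A_{(k)}\leq x-\ell\mid M_I=j,\mathcal{S}=x)=\sum_{i=k}^{j}\binom{j}{i}\left(\frac{x-\ell}{x}\right)^i\left(\frac{\ell}{x}\right)^{j-i}.$$
This is the same binomial CDF expression used in the proof of Lemma~\ref{lem:integral-prob}, now truncated at $i\geq k$ rather than $i\geq 1$.

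The final step is algebraic. Factoring $(\ell/x)^j$ out of each term gives $(\ell/x)^{-i}((x-\ell)/x)^i=((x-\ell)/\ell)^i$, which yields the stated form. As a consistency check, taking $k=1$ gives $1-(\ell/x)^j$, in agreement with Lemma~\ref{lem:integral-prob}.

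The only point needing care is the conditional-uniformity claim. It requires the post-arrival infectious arrival process to be independent of $\mathcal{S}$, which holds in the $M/M/1$ \fcfs queue because later arrivals do not affect the tagged customer's departure. The argument also assumes $\ell>0$, so that the factoring step is valid.
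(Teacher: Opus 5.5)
Your proposal is correct and follows the paper's proof essentially verbatim. Conditioning on $M_I=j$ and $\mathcal{S}=x$ makes the infectious arrival epochs i.i.d.\ $U(0,x)$; you then write $\mathbb{P}(A_{(k)}\le x-\ell)$ as a binomial tail with success probability $(x-\ell)/x$ and factor out $(\ell/x)^j$. Two points you make explicit are left unstated in the paper without changing the argument: that the post-arrival infectious stream is independent of $\mathcal{S}$, and that the formula is meant for $x\ge\ell$, which is the only range used in the subsequent theorem.
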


\begin{proof}
    Given $M_I=j$ and $\mathcal{S}=x$, $A_{i} \sim U(0,x).$ So, 
    \begin{align*}
        \mathbb{P}(A_{(k)} \leq x-\ell|M_I = j, \mathcal{S}=x) 
        &= \sum_{i=k}^j{j \choose i}\left(\frac{x-\ell}{x}\right)^i\left(1-\frac{x-\ell}{x}\right)^{j-i}\\
        &= \left(\frac{\ell}{x}\right)^j\sum_{i=k}^j{j \choose i}\left(\frac{x-\ell}{x}\right)^i\left(\frac{\ell}{x}\right)^{-i}\\
        &=\left(\frac{\ell}{x}\right)^j\sum_{i=k}^j{j \choose i}\left(\frac{x-\ell}{\ell}\right)^i.    \end{align*}
\end{proof}

Leveraging this lemma, the probability of at least $k$ infections after arrival is again straightforward.

\begin{theorem}
        For a susceptible arrival to the $M/M/1$ queue in steady state, and for any $k \geq 1$, the probability that at least $k$ arrivals to the queue after them infect the tagged customer is,
$$\mathbb{P}(N_{aa} \geq k) = \left(\frac{\lambda p}{\mu-\lambda(1-p)}\right)^ke^{-(\mu-\lambda)\ell}.$$
\end{theorem}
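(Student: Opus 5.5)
The plan is to mirror the proof of the exactly-$k$ after-arrival theorem, using Lemma~\ref{lem:uniform-k-stat} only as a check. I would condition on the sojourn time through the queue length. The tagged customer is infected by at least $k$ later arrivals exactly when at least $k$ infectious customers arrive in $[0,\mathcal{S}-\ell]$. Such an arrival then overlaps for at least $\ell$, by \fcfs: it departs after the tagged customer. So on $\{\mathcal{S}=x\}$ with $x\ge \ell$, the event is $\{M_I(x-\ell)\ge k\}$. Infectious arrivals after the tag form a Poisson process of rate $\lambda p$, independent of $Q$ and of the service times of customers already present. Hence this conditional probability is the Poisson tail $\sum_{i\ge k} e^{-\lambda p(x-\ell)}(\lambda p(x-\ell))^i/i!$. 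Equivalently, one can sum Lemma~\ref{lem:uniform-k-stat} against the Poisson$(\lambda p x)$ law of $M_I$. After the $(\ell/x)^j$ cancellations this gives the same thinned-Poisson tail, and I would note this in one line.

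The computation then follows the template of Lemmas~\ref{lem:serviceShort} and~\ref{lem:serviceLong}. I would write
\[
\mathbb{P}(N_{aa}\ge k)=\sum_{r\ge0}\Big(\tfrac{\lambda}{\mu}\Big)^r\Big(1-\tfrac{\lambda}{\mu}\Big)\int_\ell^\infty \frac{\mu^{r+1}x^re^{-\mu x}}{r!}\,\mathbb{P}\big(M_I(x-\ell)\ge k\big)\,dx .
\]
Summing over $r$ collapses the Erlang mixture to the density $(\mu-\lambda)e^{-(\mu-\lambda)x}$, consistent with Lemma~\ref{lem:prob-sojourn-x}. Substituting $y=x-\ell$ pulls out $e^{-(\mu-\lambda)\ell}$. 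This leaves $\mathbb{P}(N_{\lambda p}(Y)\ge k)$ with $Y\sim\mathrm{Exp}(\mu-\lambda)$.

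The remaining integral I would evaluate by a race argument rather than gamma integrals. In the race between the Poisson$(\lambda p)$ clock and an independent exponential$(\mu-\lambda)$ clock, each event is an infectious arrival with probability $\frac{\lambda p}{\lambda p+\mu-\lambda}=\frac{\lambda p}{\mu-\lambda(1-p)}$. The probability that the first $k$ events are all arrivals is then the $k$-th power of this ratio. Multiplying by $e^{-(\mu-\lambda)\ell}$ gives the claimed formula. As a cross-check, the tail could also be obtained by summing the exactly-$k$ theorem over $i\ge k$ as a geometric series. The main point needing care is justifying the independence between the post-arrival Poisson process and $\mathcal{S}$. I would handle it by conditioning on $Q=r$, where $\mathcal{S}$ depends only on the service times of those already present; the summation over $r$ is routine.
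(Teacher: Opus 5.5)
Your proposal is correct, but it is organized differently from the paper's proof.

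\emph{The paper's route.} It conditions on $Q=r$, $\mathcal{S}=x$ and on the \emph{total} number $M_I=j$ of infectious arrivals in $[0,x]$. It then invokes Lemma~\ref{lem:uniform-k-stat} for the uniform order statistic $A_{(k)}$. Next it swaps the $i$ and $j$ sums via $\sum_{j\ge i}\binom{j}{i}(\lambda p\ell)^j/j!=e^{\lambda p\ell}(\lambda p\ell)^i/i!$ and collapses the sum over $r$. Finally it evaluates a gamma integral $\int_\ell^\infty (x-\ell)^i e^{-(\mu-\lambda(1-p))x}\,dx$ for each $i\ge k$ and sums a geometric series.

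\emph{Your route.} You condition directly on the window count $M_I(x-\ell)$. Its Poisson tail is exactly what the paper's binomial manipulation reconstructs implicitly. You then replace the termwise integration by a reduction to $\mathbb{P}(N_{\lambda p}(Y)\ge k)$, with $Y\sim\mathrm{Exp}(\mu-\lambda)$ independent. This equals $\mathbb{E}[e^{-(\mu-\lambda)T_k}]$ for $T_k\sim\mathrm{Gamma}(k,\lambda p)$, i.e.\ $\bigl(\lambda p/(\mu-\lambda(1-p))\bigr)^k$.

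\emph{What each buys.} Your route is shorter and dispenses with Lemma~\ref{lem:uniform-k-stat} altogether. It also explains the form of the answer. By memorylessness, given $\mathcal{S}>\ell$ the excess $\mathcal{S}-\ell$ is again $\mathrm{Exp}(\mu-\lambda)$. So $N_{aa}$ is $\mathbf{1}\{\mathcal{S}>\ell\}$ times a geometric count, which gives the exactly-$k$ and at-least-$k$ laws simultaneously. The paper's computation is more mechanical but needs no probabilistic race argument.

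Two small points of phrasing when you write it up. State the race step as ``the $k$-th infectious arrival precedes $Y$'', since the exponential clock rings only once, and justify the product form by memorylessness or by the Gamma Laplace transform. Also note that, given $Q=r$, $\mathcal{S}$ includes the tagged customer's own service time, which is likewise independent of later arrivals.
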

\begin{proof}
    \begin{align*}
        &\mathbb{P}(N_{aa} \geq k) \\
        &= \sum_{r=0}^{\infty}\mathbb{P}(Q=r)\int_{\ell}^{\infty}\mathbb{P}(\mathcal{S}=x|Q=r)\sum_{j=k}^{\infty}\mathbb{P}(A_{(k)} \leq x-\ell|M_I = j,\mathcal{S}=x)\mathbb{P}(M_I=j|\mathcal{S}=x)dx.\intertext{By Lemma~\ref{lem:uniform-k-stat},}
        &=\sum_{r=0}^{\infty}\left(\frac{\lambda}{\mu}\right)^r\left(1-\frac{\lambda}{\mu}\right)\int_{\ell}^{\infty}\frac{\mu^{r+1}x^re^{-\mu x}}{r!}\sum_{j=k}^{\infty}\frac{e^{-\lambda p x}(\lambda p x)^j}{j!}\left(\frac{\ell}{x}\right)^j\sum_{i=k}^j{j \choose i}\left(\frac{x-\ell}{\ell}\right)^i dx\\
        &=(\mu-\lambda)\int_{\ell}^{\infty}e^{-\mu x}e^{-\lambda p x}\sum_{r=0}^{\infty}\frac{\left(\lambda x\right)^r}{r!}\sum_{j=k}^{\infty}\frac{(\lambda p \ell)^j}{j!}\sum_{i=k}^j{j \choose i}\left(\frac{x-\ell}{\ell}\right)^i dx\\
        &=(\mu-\lambda)\int_{\ell}^{\infty}e^{-\mu x}e^{-\lambda p x}\sum_{r=0}^{\infty}\frac{\left(\lambda x\right)^r}{r!}\sum_{i=k}^{\infty} \left(\frac{x-\ell}{\ell}\right)^i\sum_{j=i}^{\infty}{j \choose i}\frac{(\lambda p \ell)^j}{j!} dx\\
        &=(\mu-\lambda)\int_{\ell}^{\infty}e^{-\mu x}e^{-\lambda p x}\sum_{r=0}^{\infty}\frac{\left(\lambda x\right)^r}{r!}\sum_{i=k}^{\infty} \left(\frac{x-\ell}{\ell}\right)^i\left( \frac{e^{\lambda p \ell}(\lambda p \ell)^i}{i!}\right)dx\\
        &=(\mu-\lambda)e^{\lambda p \ell}\sum_{i=k}^{\infty} \frac{\left(\lambda p\right)^i}{i!} \int_{\ell}^{\infty}(x-\ell)^i e^{-\mu x}e^{-\lambda p x}\sum_{r=0}^{\infty}\frac{\left(\lambda x\right)^r}{r!}dx\\
        &=(\mu-\lambda)e^{\lambda p \ell}\sum_{i=k}^{\infty} \frac{\left(\lambda p\right)^i}{i!} \int_{\ell}^{\infty}(x-\ell)^i e^{-(\mu-\lambda(1-p)) x}dx\\
        &=(\mu-\lambda)e^{\lambda p \ell}\sum_{i=k}^{\infty} \frac{\left(\lambda p\right)^i}{i!} \left(e^{-(\mu-\lambda(1-p)) \ell}\frac{i!}{(\mu-\lambda(1-p))^{i+1}}\right)\\
        &=\frac{\mu-\lambda}{\mu-\lambda(1-p)}e^{-(\mu-\lambda) \ell}\sum_{i=k}^{\infty} \left(\frac{\lambda p}{\mu-\lambda(1-p)}\right)^i\\
        &=\frac{\mu-\lambda}{\mu-\lambda(1-p)}e^{-(\mu-\lambda) \ell}\left(\frac{\mu-\lambda(1-p)}{\mu-\lambda}\left(\frac{\lambda p}{\mu-\lambda(1-p)}\right)^k\right)\\
        &=\left(\frac{\lambda p}{\mu-\lambda(1-p)}\right)^ke^{-(\mu-\lambda) \ell}.
    \end{align*}
\end{proof}

\subsubsection{Probability of at least $k$ infections}

In finding the probability of a susceptible customer receiving at least $k$ infection transmissions, we recognize that for $k =1$, this should be identical to the probability of the tagged customer getting infected at all. Indeed we find that,

\begin{theorem}\label{thm:at-least-k}
    Consider any $k\geq 1.$ For a susceptible arrival to the $M/M/1$ queue in steady state, the probability that at least $k$ infectious customers transmit infection to them is,
    $$\mathbb{P}(N_I \geq k) = \left(\frac{2\lambda p}{\mu-\lambda+2\lambda p}\right)^ke^{-(\mu-\lambda)\ell}.$$
\end{theorem}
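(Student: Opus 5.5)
The plan is to argue exactly as in Theorem~\ref{thm:exactly-k-infections}, but with the tail event $\{N_I \geq k\}$ in place of $\{N_I = k\}$. We first condition on the sojourn time $\mathcal{S}=x$. If $x<\ell$, no transmission can occur, so only $x \geq \ell$ contributes. Given $\mathcal{S}=x$, the counts $N_{ua}$ and $N_{aa}$ are conditionally independent, and by Lemmas~\ref{lem:num-upon-given-sojourn} and~\ref{lem:num-after-given-sojourn} each is Poisson with mean $\lambda p(x-\ell)$. Hence, given $\mathcal{S}=x$, the total $N_I = N_{ua}+N_{aa}$ is Poisson with mean $2\lambda p(x-\ell)$; this is the same convolution already carried out in the proof of Theorem~\ref{thm:exactly-k-infections}.

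Write $\theta = \mu-\lambda+2\lambda p$. The most economical route is to sum the exact formula \eqref{eqn:exactkinfec} over $n \geq k$, which is a geometric series:
\begin{align*}
\mathbb{P}(N_I \geq k) = \sum_{n=k}^{\infty}\frac{\mu-\lambda}{\theta}\left(\frac{2\lambda p}{\theta}\right)^n e^{-(\mu-\lambda)\ell} = \frac{\mu-\lambda}{\theta}\cdot\frac{(2\lambda p/\theta)^k}{1-2\lambda p/\theta}\, e^{-(\mu-\lambda)\ell}.
\end{align*}
Since $1-2\lambda p/\theta = (\mu-\lambda)/\theta$, the prefactor cancels and we obtain $(2\lambda p/\theta)^k e^{-(\mu-\lambda)\ell}$, which is the claimed formula. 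The ratio $2\lambda p/\theta$ is strictly less than $1$ because $\mu>\lambda$, so the series converges.

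As a consistency check, I would also verify the result directly in integral form. Set $y=x-\ell$ and use Lemma~\ref{lem:prob-sojourn-x}:
\begin{align*}
\mathbb{P}(N_I \geq k) = (\mu-\lambda)e^{-(\mu-\lambda)\ell}\int_0^\infty e^{-(\mu-\lambda)y}\,\mathbb{P}\bigl(\mathrm{Poi}(2\lambda p y)\geq k\bigr)\,dy.
\end{align*}
The integrand's probability equals $\mathbb{P}(\Gamma_k \leq y)$, where $\Gamma_k$ is Erlang with $k$ phases and rate $2\lambda p$. The integral is therefore $\mathbb{P}(\Gamma_k \leq E)$ for an independent $E\sim\mathrm{Exp}(\mu-\lambda)$, which equals $(2\lambda p/\theta)^k$ by the memoryless competition of exponentials.

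The only delicate point is conceptual rather than computational. Formula \eqref{eqn:exactkinfec} is stated only for $k \geq 1$, and the geometric sum above uses only $n \geq k \geq 1$, so the zero term is never needed. Even so, I would check that the pieces are consistent: for $k=1$ the formula should recover Theorem~\ref{thm:total-infection-prob}, and it does. This confirms that the conditional-independence step is being used exactly as in the earlier theorems.
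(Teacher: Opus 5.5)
Your proposal is correct and is essentially the paper's proof: the paper also obtains $\mathbb{P}(N_I \geq k)$ by summing the exact-$k$ formula \eqref{eqn:exactkinfec} over $i \geq k$ as a geometric series, with the prefactor cancelling because $1 - \frac{2\lambda p}{\mu-\lambda+2\lambda p} = \frac{\mu-\lambda}{\mu-\lambda+2\lambda p}$. Your Erlang-versus-exponential check is a valid independent derivation that bypasses the exact-$k$ formula, but the theorem does not need it. One small slip there: it is $(\mu-\lambda)$ times the integral, not the integral alone, that equals $\mathbb{P}(\Gamma_k \leq E)$, and with that fix the check gives the same result.
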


\begin{proof}
    We leverage Theorem~\ref{thm:exactly-k-infections}.
    \begin{align*}
        \mathbb{P}(N_I \geq k) 
        &= \sum_{i=k}^{\infty}\mathbb{P}(N_I = k)\\
        &= \sum_{i=k}^{\infty}\frac{\mu-\lambda}{\mu-\lambda+2\lambda p}\left(\frac{2\lambda p}{\mu-\lambda+2\lambda p}\right)^ie^{-(\mu-\lambda)\ell}\\
        &=\left(\frac{2\lambda p}{\mu-\lambda+2\lambda p}\right)^ke^{-(\mu-\lambda)\ell}.
    \end{align*}

    Note that for $k=1$ this matches the result from Theorem~\ref{thm:total-infection-prob}.
\end{proof}

\subsection{Moments of the number infected}\label{sec:moments}
We reframe the problem to now consider the arrival of a single \textit{infectious customer}, such that now we let $N_I$ be the number of susceptible customer this tagged customer infects.

In this section, we describe how to compute the moments of the number of infections attributed to one infected customer.  These new results will leverage our previous results. Despite our change in perspective, since our results simply leverage the effect of overlapping for sufficiently long with an customer of opposite infection type, we can substitute infectious arrivals at rate $\lambda p$ with susceptible arrivals at rate $\lambda(1-p)$, to see that, without loss of generality, our previous results compute the distribution of the number of customers infected by one tagged infectious arrival. 

\begin{theorem}\label{thm:moments-num-infect}
Let $N_I$ be the number of susceptible customers that are infected by one infectious customer. Then the $m^{th}$ moment of $N_I$ is given by 
\begin{eqnarray*}
    \mathbb{E} \left[ N_I^m \right] &=& \frac{\mu - \lambda}{\mu - \lambda + 2\lambda (1-p)} \operatorname{Li}_{-m}\left( \frac{2\lambda (1-p)}{\mu - \lambda + 2\lambda (1-p)} \right )
\end{eqnarray*}
where $\operatorname{Li}_{m}(x)$ is the polylogarithm function and is defined by 
\begin{equation*}
    \operatorname{Li}_{m}(x) = \sum^{\infty}_{j=1} \frac{x^j}{j^m}.
\end{equation*}
    \begin{proof}
        \begin{eqnarray*}
    \mathbb{E} \left[ N_I^m \right] &=& \sum_{j=0}^{\infty} j^m \cdot \mathbb{P} \left( N_I = m \right) \\
    &=& \sum_{j=0}^{\infty} j^m \cdot \left( \frac{\mu - \lambda}{\mu - \lambda + 2\lambda (1-p)} \right) \left( \frac{2 \lambda (1-p)}{\mu - \lambda + 2\lambda (1-p)} \right)^j e^{-(\mu-\lambda) \ell}  \\
    &=& \frac{\mu - \lambda}{\mu - \lambda + 2\lambda (1-p)} \operatorname{Li}_{-m}\left( \frac{2\lambda (1-p)}{\mu - \lambda + 2\lambda (1-p)} \right ) e^{-(\mu-\lambda) \ell} .
\end{eqnarray*}
    \end{proof}
\end{theorem}

It is worth noting that, for $\beta = \frac{\mu - \lambda}{\mu - \lambda + 2\lambda (1-p)}$, then $$\mathbb{E}[N_I^m] = \beta \operatorname{Li}_{-m}(1-\beta) e^{-(\mu-\lambda)\ell}.$$ This intuitively makes the moments of the number infected by one infectious customer the product of two terms: the exponential that the tagged customer's sojourn time is at least the overlap and the moment of a geometric with parameter $\beta$. Here, $\beta$ represents the probability that the exponential for the tagged customer's sojourn time is more than that of having a susceptible arrival either upon or after their arrival.

In particular, the polylogarithm function has the following special values,
\begin{eqnarray*}
\operatorname{Li}_{-1}(x) &=& \frac{x}{(1-x)^2} \\
\operatorname{Li}_{-2}(x) &=& \frac{x (1+x)}{(1-x)^3}.
\end{eqnarray*}
Thus, the mean and variance of the number infected by one infectious customer is equal to
\begin{eqnarray}
 \mathbb{E} \left[ N_I \right] &=& \frac{2\lambda(1-p)}{\mu-\lambda} e^{-(\mu-\lambda) \ell},\text{ and}    \label{eqn:meannuminfec}\\
  \mathrm{Var} \left[ N_I \right] &=& \frac{2 \lambda (1-p) \left( \mu - \lambda + 4 \lambda (1-p) \right)}{(\mu - \lambda)^2} e^{-(\mu-\lambda) \ell}\nonumber\\
  &&- \frac{4\lambda^2 (1-p)^2}{(\mu-\lambda)^2} e^{-2(\mu-\lambda) \ell}. \label{eqn:varnuminfec}
\end{eqnarray}

\subsection{New infections vs. transmissions}



Now that we have formulas for the mean number of customers that an infectious customer will infect and the probability of infection for a susceptible customer, we can compare the mean new number of infections to the expected number of transmissions from infectious customers.  

If we wish to find the expected number of total new infections, we recognize that for fixed time frame $t$, we expect $\lambda(1-p) t$ susceptible arrivals and $\lambda p t$ infectious arrivals.  Thus, we expect $\lambda(1-p) \mathbb{P}(I) t$ new infections and we expect $\lambda p \mathbb{E}[N_I] t$ transmissions of infection from infectious customers.  The ratio of new infections and number of transmissions is equal to
\begin{eqnarray*}
    \frac{\text{New Infections}}{\text{Total Number of Transmissions}} &=& \frac{\mu - \lambda}{\mu - \lambda + 2\lambda p} = 1 - \frac{2\lambda p}{\mu - \lambda + 2\lambda p} .
\end{eqnarray*}

Note that the ratio is equal to a value that is between zero and one.  We observe that as the proportion of infectious arrivals of the system, that is $p$, goes to zero, then the ratio of infections and transmissions goes to one.  Moreover, when the probability of being infectious goes to one, then the ratio converges to $\frac{\mu - \lambda}{\mu + \lambda}$.  This demonstrates how, for each infectious customer, counting the number of susceptible customers they would infect on average causes us to overcount duplicate transmissions of infection from multiple infectious customers to a single susceptible customer.

We note (see Figure~\ref{fig:overlaps-transmissions}) that this overcounting is related to $\rho$, or more directly to the expected number of overlaps.

\begin{figure}[!htbp] 
\centering
\makebox[\linewidth][c]{
\includegraphics[width=0.6\textwidth]{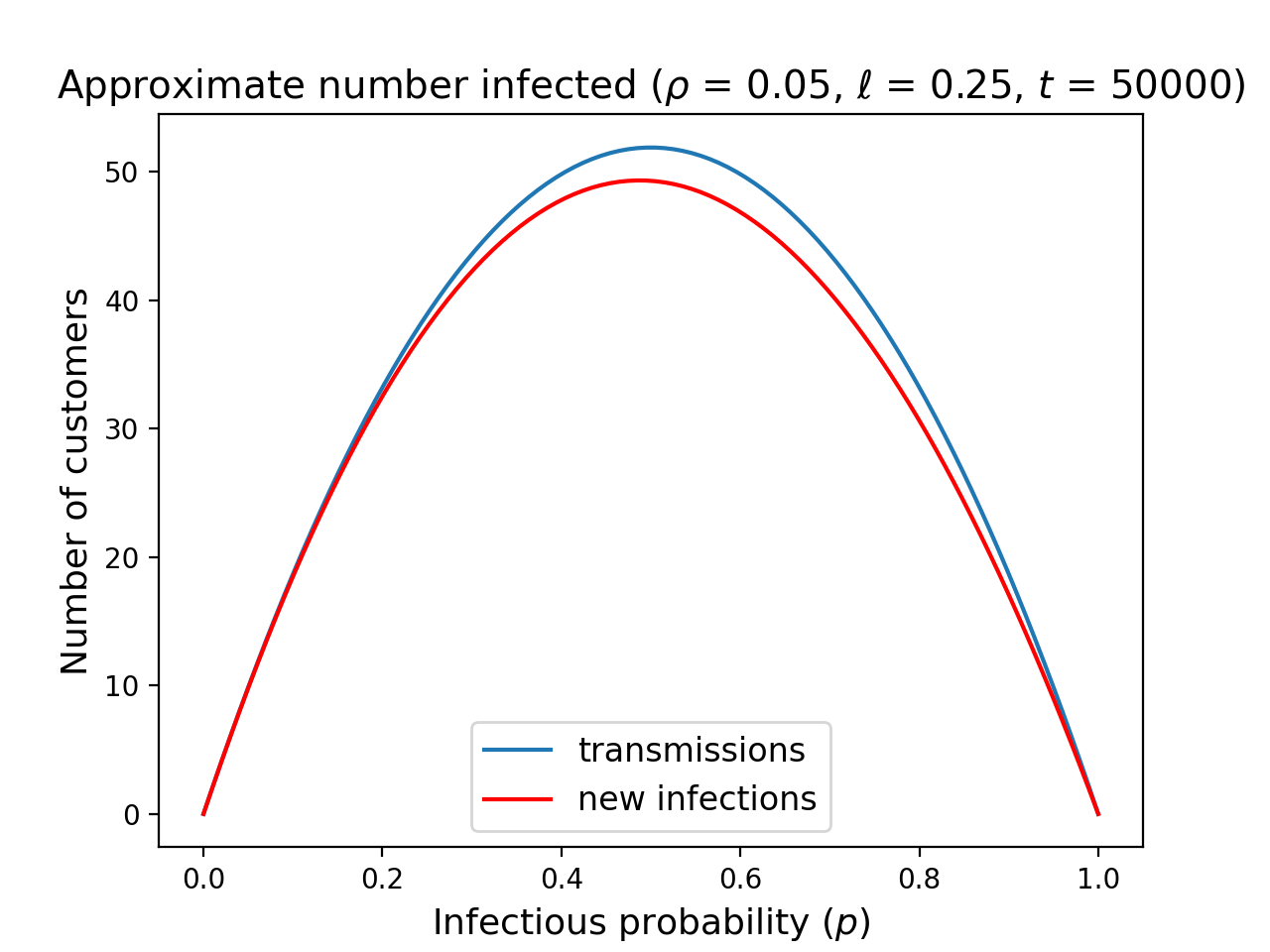}~\includegraphics[width=0.6\textwidth]{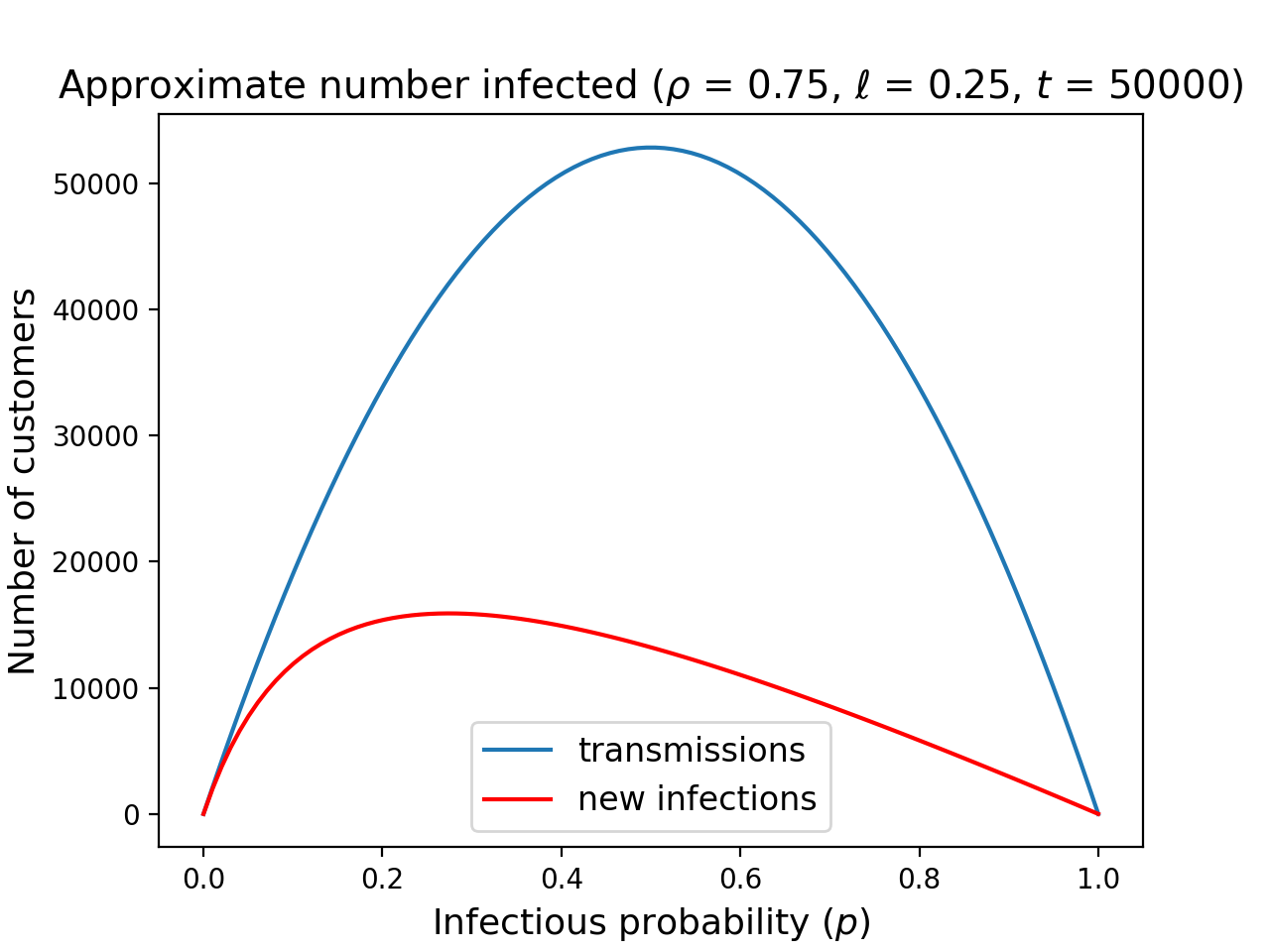}}
\caption{Higher server utilization, $\rho = \frac{\lambda}{\mu}$, causes these formulas to diverge.}
\label{fig:overlaps-transmissions}
\end{figure}


We see that as the utilization increases, the number of total infection transmissions diverges from the number of new infections.  This is because infectious customers are infecting the \textit{same} susceptible customers when the utilization is high.  This also suggests that in higher utilization settings that people might get ``even sicker'' if we were measuring this based on the number of people that infected them.   

{\color{black} \begin{remark}
We also emphasize that all of the results developed in this section naturally extend to the random-threshold setting introduced in Section~\ref{sec:prob-infec}. The key structural feature of our fixed-threshold expressions is that they depend on the threshold only through exponential terms of the form $e^{-(\mu - \lambda) \ell}$. As shown earlier, replacing this exponential dependence by the Laplace–Stieltjes transform of the threshold distribution immediately yields the corresponding formulas for a random threshold. {\color{black} Further, the context of considering a random threshold for each susceptible customer described in Section~\ref{sec:prob-infec} can, by our same logic in adapting the earlier results from Section~\ref{sec:mult-infec} to Section~\ref{sec:moments}, be reimagined for the moments as a different random threshold for each infectious customer. While this does not allow for quite the same degree of individual susceptible-infectious pair threshold randomness as allowed for in \cite{kang2021queueing} (with the caveat that their mathematical results assume a single infectious customer in the system at once), we again refer the reader to Tables~\ref{tab:compareaveragemeans} and \ref{tab:compareaveragevars} for insight into our conjecture that this setting is likely highly similar to the system explored here.} Thus, each of the moment expressions derived here, including the probability of infection, the mean number of exposures, and higher-order moments can be generalized in a completely parallel fashion by applying the same Laplace–Stieltjes transform argument used in Section~3. This provides a unified and highly flexible analytical framework, allowing our moment analysis to accommodate arbitrary threshold distributions without requiring any additional derivations or assumptions. If a practitioner specifies a threshold distribution, the random-threshold results follow directly by evaluating its Laplace–Stieltjes transform at $(\mu - \lambda)$, thereby demonstrating the broad applicability and generality of our approach. 
\end{remark}}

\section{Numerical Insights and Extensions} \label{numerics_infections}

{\color{black}We explore comparisons of our theoretical results with simulations and consider extensions to the $G/M/1$ and $M/G/1$ cases.}

\subsection{$M/M/1$ numerical results}

{\color{black} Our simulation framework is as follows:}

For each set of parameters we average over 5 simulations, with 500,000 customers for each simulation. We always normalize our utilization by fixing $\mu = 1$ for all experiments.  Note that as $\frac{\lambda}{\mu}\rightarrow 1$, it takes longer to reach a steady state, {\color{black}hence we expect to see less accurate simulations as $\lambda$ increases, especially for the higher moments}. We finally begin by holding $\ell = 0.5$ constant across our initial experiments. 

In Table~\ref{tab:probs} we consider the infection probability of a susceptible customer. We recognize that the probability of becoming infected (at all) is effectively the probability of getting at least 1 infection, but we also explore the probabilities of getting exactly $k$ infections in Table~\ref{tab:exactk}, where we consider the severity-$k$ infection probabilities. Next, as insight into the impact on infectious customers on the system, in Tables~\ref{tab:meanvar}, \ref{tab:elllowvar}, and \ref{tab:ellhighvar} we consider the mean and variance of the number infected {\color{black} for our initial and then two additional infection thresholds.  Finally, in Section~\ref{sec:random-thresh} we conduct experiments exploring random threshold settings.

\subsubsection{Fixed infection threshold}

We begin with the experimental and theoretical infection probabilities for a susceptible customer.}

\begin{table}[!htbp]
\centering
\small
\begin{tabular}{|c|c|c|c|}
\hline
$p$ & $\lambda$& \makecell{$\mathbb{P}(I)$\\ (Simulation)}  & \makecell{$\mathbb{P}(I)$\\ (Equation~\ref{eqn:prob-infec})}\\\hline
.01 & .50 & .0156 & .0153 \\

.01 & .75 & .0493 & .0500 \\

.01 & .90 & .145 & .145 \\\hline

.02 & .50 & .0299 & .0300 \\

.02 & .75 & .0947 & .0946 \\

.02 & .90 & .251 & .252 \\\hline

.10 & .50 & .130 & .130 \\

.10 & .75 & .331 & .331 \\

.10 & .90 & .614 & .612 \\\hline

.50 & .50 & .389 & .389 \\

.50 & .75 & .661 & .662 \\

.50 & .90 & .857 & .856 \\\hline

\end{tabular}
\caption{For a susceptible arrival in steady state, the theoretical and simulated probabilities that this customer is infected.}
\label{tab:probs}
\end{table} 

{\color{black} We note two nice and intuitive insights from Table~\ref{tab:probs}. Firstly, the infection probability becomes higher in more crowded systems, as customers must wait longer and thus have a higher incidence rate of sufficiently long overlaps that their probability at least one such overlap is with an infectious customer increases. Further, the infection probability rises with the proportion of infectious arrivals of the system.

\FloatBarrier
We will now consider the number of infections such a customer receives, as a proxy for severity of infection in Table~\ref{tab:exactk}. For this simulation, note we will only consider 100,000 customer per each experiment.}

\begin{table}[h]
    \centering
    \small
    \begin{tabular}{|c|c|c|c|c|c|c|c|}
    \hline
   $p$ & $\lambda$ &  \makecell{$\mathbb{P}(I)$\\$k=1$\\(Sim.)} & \makecell{$\mathbb{P}(I)$\\$k=1$\\(Eqn.~\ref{eqn:exactkinfec})}&  \makecell{$\mathbb{P}(I)$\\$k=2$\\(Sim.)} &\makecell{$\mathbb{P}(I)$\\$k=2$\\(Eqn.~\ref{eqn:exactkinfec})}& \makecell{$\mathbb{P}(I)$\\$k=3$\\(Sim.)}& \makecell{$\mathbb{P}(I)$\\$k=3$\\(Eqn.~\ref{eqn:exactkinfec})} \\\hline
.01 & .50 & .0148 & .0150         & .00035    & .00029            & .00002    & .00001            \\
.01 & .75   & .0466 & .0471         & .00244    & .00267            & .00009    & .00015            \\
.01 & .90   & .124  & .123          & .0192 & .0188         & .00346    & .00286            \\ \hline
.02 & .50   & .0293 & .0288         & .00121    & .00111            & .00008    & .00004            \\
.02 & .75   & .0838 & .0844         & .00887    & .00905            & .00088    & .00097            \\
.02 & .90   & .182  & .185          & .0499 & .0490         & .0113 & .0130         \\ \hline
.10 & .50   & .107  & .108          & .0180 & .0180         & .00308    & .00300            \\
.10 & .75   & .207  & .207          & .0786 & .0776         & .0298 & .0291         \\
.10 & .90   & .218  & .218 & .142      & .140    & .0883     & .0903   \\ \hline
.50 & .50       & .195      & .195    & .0970     & .0974   & .0490     & .0487   \\
.50 & .75       & .165      & .165    & .124      & .124    & .0923     & .0931   \\
.50 & .90       & .0859     & .0856   & .0771     & .0771   & .0688     & .0693   \\ \hline    \end{tabular}
    \caption{For a susceptible arrival in steady state, the theoretical and simulated probabilities that they are infected by $k$ different customers.}
    \label{tab:exactk}
\end{table}

{\color{black} We again can draw some intuitive insights from the results in Table~\ref{tab:exactk}. We note the geometric relationship between the results, as we saw in Figure~\ref{fig:transmissions}, which we understand as naturally coming from this process being analogous to confirming whether the time between multiple infectious arrivals occurs within the tagged customer's infection window. We additionally note that as the queue utility increases, the probability of different numbers of infection transmissions becomes more uniform, as customer overlaps increase in frequency and length, such that there is less of a sharp contrast between the likelihoods of overlapping with one customer and of overlapping with more.

\FloatBarrier
Related to this perspective of infection transmitting overlaps for a susceptible arrival is the perspective of an infectious arrival, so we now explore the moments of the number of infections transmitted by a single infectious customer in Table~\ref{tab:meanvar}.

\begin{table}[h]
\centering
\small
\begin{tabular}{|c|c|c|c|c|c|}
\hline
 $p$ & $\lambda$  &\makecell{$\mathbb{E}[N_I]$\\ (Simulation)}& \makecell{$\mathbb{E}[N_I]$\\ (Equation~\ref{eqn:meannuminfec})}& \makecell{Var$[N_I]$\\(Simulation)} & \makecell{Var$[N_I]$\\ (Equation~\ref{eqn:varnuminfec})}  \\\hline
.01 & .50 & 1.573 & 1.542 & 5.422 & 5.271 \\
.01 & .75 & 5.274 & 5.242 & 39.889 & 40.038      \\
.01 & .90 & 17.354 & 16.951 & 371.666 & 333.748  \\ \hline
.02 & .50 & 1.532 & 1.526 & 5.259 & 5.180 \\
.02 & .75 & 5.225 & 5.189 & 40.449          & 39.286 \\
.02 & .90 & 16.780 & 16.780 & 317.466 & 327.209 \\ \hline
.10 & .50 & 1.392 & 1.402 & 4.442 & 4.483 \\
.10 & .75 & 4.729 & 4.765 & 32.889 & 33.523 \\
.10 & .90 & 15.502 & 15.410 & 281.787 & 277.226 \\ \hline
.50 & .50 & 0.780 & 0.779 & 1.725 & 1.730 \\
.50 & .75 & 2.669 & 2.647 & 11.601 & 11.523 \\
.50 & .90 & 8.533 & 8.561 & 88.752 & 89.368       \\ \hline\end{tabular}
\caption{For an infectious arrival in steady state, the theoretical and simulated mean and variance of the number of customers they infect.}
\label{tab:meanvar}
\end{table} 

{\color{black}
In Table~\ref{tab:meanvar} we note a few significant features. Firstly, we see, as expected, a more pronounced accuracy decrease as the utility increases than for the probabilities of infection in Table~\ref{tab:probs}. Similarly, we note that the moments are much closer for lower utility, which follows straightforwardly from the lower volatility of lower utility systems. Finally, we note that these values decrease as the proportion of infectious arrivals of the system increases. This is intuitive, as a higher proportion of infectious arrivals causes there to be fewer susceptible customers in the system for any given infectious customer to infect. 

\FloatBarrier
We additionally want to highlight how these results hold consistently even for infection thresholds that are low or high in comparison to average sojourn time. To that end, we define $\ell_{\text{low}} = 0.1$ and $\ell_{\text{high}} = 1$ and repeat our prior experiment in Tables~\ref{tab:elllowvar} and \ref{tab:ellhighvar} respectively.}

\begin{table}[h]
\centering
\small
\color{black}
\begin{tabular}{|c|c|c|c|c|c|}
\hline
 $p$ & $\lambda$  &\makecell{$\mathbb{E}[N_I]$\\ (Simulation)}& \makecell{$\mathbb{E}[N_I]$\\ (Equation~\ref{eqn:meannuminfec})}& \makecell{Var$[N_I]$\\(Simulation)} & \makecell{Var$[N_I]$\\ (Equation~\ref{eqn:varnuminfec})}  \\\hline
.01 & .50 & 1.901 & 1.883 & 5.845  & 5.795 \\
.01 & .75 & 5.808 & 5.793 & 40.673 & 41.055 \\
.01 & .90 & 18.345 & 17.643 & 376.002 & 335.164 \\ \hline
.02 & .50 & 1.867 & 1.864 & 5.729 & 5.697 \\
.02 & .75 & 5.739 & 5.735 & 40.175 & 40.288 \\
.02 & .90 & 17.593 & 17.464 & 337.260 & 328.603 \\ \hline
.10 & .50 & 1.719 & 1.712 & 4.979 & 4.945 \\
.10 & .75 & 5.203 & 5.267 & 33.000 & 34.409 \\
.10 & .90 & 15.769 & 16.039 & 268.624 &  278.453 \\ \hline
.50 & .50 & 0.946 & 0.951 & 1.927 & 1.949 \\
.50 & .75 & 2.939 & 2.926 & 12.169 & 11.920 \\
.50 & .90 & 8.933 & 8.910 & 91.037 & 89.902 \\ \hline
\end{tabular}\captionsetup{font={color=black}}\caption{For an infectious arrival in steady state with infection transmission threshold $\ell_{\text{low}}$,  the theoretical and simulated mean and variance of the number of customers they infect.}
\label{tab:elllowvar}
\end{table} 

\begin{table}[h]
\centering
\small
\color{black}
\begin{tabular}{|c|c|c|c|c|c|}
\hline
 $p$ & $\lambda$  &\makecell{$\mathbb{E}[N_I]$\\ (Simulation)}& \makecell{$\mathbb{E}[N_I]$\\ (Equation~\ref{eqn:meannuminfec})}& \makecell{Var$[N_I]$\\(Simulation)} & \makecell{Var$[N_I]$\\ (Equation~\ref{eqn:varnuminfec})}  \\\hline
.01 & .50 & 1.227 & 1.201 & 4.670 & 4.514 \\
.01 & .75 & 4.598 & 4.626 & 37.130 & 38.183 \\
.01 & .90 & 16.045 & 16.124 & 337.237 & 330.801 \\ \hline
.02 & .50 & 1.184 & 1.189 & 4.356 & 4.436 \\
.02 & .75 & 4.565 & 4.579 & 37.031 & 37.462 \\
.02 & .90 & 16.058 & 15.961 & 335.921 & 324.313 \\ \hline
.10 & .50 & 1.086 & 1.092 & 3.787 & 3.830 \\
.10 & .75 & 4.206 & 4.206 & 31.652 & 31.939 \\
.10 & .90 & 14.626 & 14.658 & 276.681 & 274.722 \\ \hline
.50 & .50 & 0.609 & 0.607 & 1.467 & 1.452 \\
.50 & .75 & 2.378 & 2.336 & 11.285 & 10.896 \\
.50 & .90 & 8.224 & 8.144 & 89.401 & 88.410 \\ \hline
\end{tabular}\captionsetup{font={color=black}}\caption{For an infectious arrival in steady state with infection transmission threshold $\ell_{\text{high}}$,  the theoretical and simulated mean and variance of the number of customers they infect.}
\label{tab:ellhighvar}
\end{table} 

{\color{black}
Comparing Tables~\ref{tab:elllowvar}, \ref{tab:meanvar}, and \ref{tab:ellhighvar}, note that both moments decrease as $\ell$ increases, which we again intuitively understand since higher thresholds cause greater difficulty in transmitting infection. Additionally note the higher accuracy of the simulation results for higher thresholds, due to the lower volatility of the number of sufficiently long overlaps when we restrict our consideration to a smaller subset of the right tail of the overlap distribution.
\FloatBarrier
It is now of interest to move beyond this consideration of different fixed thresholds to experiments with varying and random thresholds.

\subsubsection{Random infection threshold}\label{sec:random-thresh}
We have discussed throughout the paper the manner in which our results can be extended to settings with a random infection threshold per tagged customer (which throughout our analysis has generally meant a susceptible customer worrying about how many customers will infect them, but can analogously mean an infectious customer exploring how many different customers they will infect). We explore this phenomenon experimentally in the following simulations by comparing five different experiments using the two threshold values $\ell_{\text{low}}$ and $\ell_{\text{high}}$:
\begin{enumerate}
    \item \label{exp:1} Each susceptible customer's infection threshold is randomly chosen to be $\ell_{\text{low}}$ or $\ell_{\text{high}}$ with equal probability;
    \item \label{exp:2} Each infectious customer's infecting threshold is randomly chosen to be $\ell_{\text{low}}$ or $\ell_{\text{high}}$ with equal probability;
    \item \label{exp:3} The average across two experiments, one of which has constant threshold $\ell_{\text{low}}$, the other of which has constant threshold $\ell_{\text{high}}$ (note for finding the moments here we directly combine our results from Tables~\ref{tab:elllowvar} and \ref{tab:ellhighvar};
    \item \label{exp:4} An experiment where we use a fixed constant threshold value which is the mean of $\ell_{\text{low}}$ and $\ell_{\text{high}}$, that is $\ell_{\text{mean}} = 0.55$;
    \item \label{exp:5} And an experiment where a random infection threshold is chosen pairwise for each individual susceptible-infectious customer pairing. With equal probability this threshold is chose to be $\ell_{\text{low}}$ or $\ell_{\text{high}}$.
\end{enumerate}
For each experiment we take the average of five simulations over 500,000 customers across each set of parameters. We again keep $\mu=1$ for all experiments.

In Table~\ref{tab:compareaverageprobs} we explore the probability of infection across the five different experiments. In Table~\ref{tab:compareaveragemeans} we explore the expected number of customer infected across the different experiments, and in Table~\ref{tab:compareaveragevars} we find the variance of this value. Finally, in Table~\ref{tab:comparedeltapercents}, using Experiment~\ref{exp:4} as a control we consider the accuracy of the other four experiments.

Within this exploration, the following conjecture is worth noting.

\begin{conjecture}\label{conj:notinfectedprob}
    For a queueing system satisfying \fcfs discipline with random infection threshold whose distribution $L$ is allowed to vary for each susceptible-infectious pair of customers, the probability of \textit{not} becoming infected is \begin{eqnarray}\label{eqn:conjecture-pairwise}
    \mathbb{E}[e^{-2\lambda p(\mathcal{S}-L)^+}].\end{eqnarray}
\end{conjecture}

 The intuition behind this conjecture is as follows. When we compare the thinned infectious arrival process, then $\mathbb{E}\left[e^{-\lambda p(\mathcal{S}-L)^+}\right]$ gives us the after arrival probability that no infectious customers arrive while the tagged customer has remaining sojourn time of at least $\ell$. Under Conjecture~\ref{conj:symmetry}, the upon arrival probability of no infection is the same, and so we get the proposed equation. Further, by Jensen's Inequality, for a given mean $\ell^{\star}$ of a threshold distribution, a  constant threshold of $\ell^{\star}$ is the best we can do.}

{\begin{table}[h]
\centering
\makebox[\linewidth][c]{
\footnotesize
\color{black}
\begin{tabular}{|c|c|c|c|c|c|c|}
\hline
 $p$ & $\lambda$  &\makecell{Exp.~\ref{exp:1} $\mathbb{P}(I)$ \\(Random suscep.)}& \makecell{Exp.~\ref{exp:2} $\mathbb{P}(I)$ \\(Random infec.)}& \makecell{Exp.~\ref{exp:3} $\mathbb{P}(I)$\\(Avg. constant)}& \makecell{Exp.~\ref{exp:4} $\mathbb{P}(I)$\\(Fixed mean)}& \makecell{Exp.~\ref{exp:5} $\mathbb{P}(I)$\\(Random pairwise)} \\\hline
.01 & .50 & .0154 & .0153 & .0152 & .0147 &.0152 \\
.01 & .75 & .0497 & .0490 & .0498 & .0496 &.0501 \\
.01 & .90 & .143 & .146 & .143 & .144 &.148 \\ \hline
.02 & .50 & .0295 & .0297 & .0300 & .0298 & .0304 \\
.02 & .75 & .0943 & .0935 & .0934 & .0946 & .0937 \\
.02 & .90 & .250 & .248 & .250 & .251 & .251 \\\hline
.10 & .50 & .130 & .130 & .130 & .127 & .131 \\
.10 & .75 & .327 & .330 & .329 & .325 & .332 \\
.10 & .90 & .610 & .608 & .609 & .611 & .610 \\ \hline
.50 & .50 & .390 & .397 & .389 & .380 & .397 \\
.50 & .75 & .658 & .668 & .658 & .654 & .667 \\
.50 & .90 & .852 & .858 & .853 & .852 & .860 \\ \hline
\end{tabular}}\captionsetup{font={color=black}}\caption{For a susceptible arrival in steady state the simulated probability they get infected across different experiment set-ups.}
\label{tab:compareaverageprobs}
\end{table}}

{\color{black}
From Table~\ref{tab:compareaverageprobs}, note that across all five experiments we achieve strikingly similar probabilities of susceptible customers becoming infected. Comparing against Experiment~\ref{exp:4}, where we have a fixed threshold of $\ell_{\text{mean}} = 0.55$ for all customers, we note that the maximum average percentage delta between this experiment and another occurs for Experiment~\ref{exp:5} where it is .0192 and the minimum average percentage delta between this experiment and another occurs for Experiment~\ref{exp:1} where it is .0115.}

{\begin{table}[h]
\centering
\makebox[\linewidth][c]{
\footnotesize
\color{black}
\begin{tabular}{|c|c|c|c|c|c|c|}
\hline
 $p$ & $\lambda$  &\makecell{Exp.~\ref{exp:1} $\mathbb{E}[N_I]$\\(Random suscep.)}& \makecell{Exp.~\ref{exp:2} $\mathbb{E}[N_I]$\\(Random infec.)}& \makecell{Exp.~\ref{exp:3} $\mathbb{E}[N_I]$\\(Avg. constant)}& \makecell{Exp.~\ref{exp:4} $\mathbb{E}[N_I]$\\(Fixed mean)}& \makecell{Exp.~\ref{exp:5} $\mathbb{E}[N_I]$\\(Random pairwise)} \\\hline
.01 & .50 & 1.558 & 1.524 & 1.564 & 1.504 & 1.571 \\
.01 & .75 & 5.193 & 5.133 & 5.203 & 5.207 & 5.208 \\
.01 & .90 & 17.143 & 17.061 & 17.195 & 16.877 & 16.919 \\ \hline
.02 & .50 & 1.524 & 1.531 & 1.526 & 1.481 & 1.544 \\
.02 & .75 & 5.143 & 5.180 & 5.152 & 5.088 & 5.082 \\
.02 & .90 & 16.696 & 16.655 & 16.826 & 16.405 & 16.652 \\ \hline
.10 & .50 & 1.393 & 1.402 & 1.403 & 1.369 & 1.399 \\
.10 & .75 & 4.697 & 4.763 & 4.705 & 4.695 & 4.740 \\
.10 & .90 & 15.250 & 15.229 & 15.198 & 15.487 & 15.569 \\ \hline
.50 & .50 & .778 & .776 & .778 & .758 & .778 \\
.50 & .75 & 2.606 & 2.630 & 2.659 & 2.639 & 2.617\\
.50 & .90 & 8.413 & 8.632 & 8.579 & 8.605 & 8.323 \\ \hline
\end{tabular}}\captionsetup{font={color=black}}\caption{For an infectious arrival in steady state the simulated mean number of customers they infect across different experiment set-ups.}
\label{tab:compareaveragemeans}
\end{table}}
{\color{black}
We again see in Table~\ref{tab:compareaveragemeans} similar expected numbers of infection transmissions across the five experiments. Using Experiment~\ref{exp:4} as the control, we note that the maximum average percentage delta between this experiment and another occurs for Experiment~\ref{exp:3} where it is .0176 and the minimum average percentage delta between this experiment and another occurs for Experiment~\ref{exp:2} where it is .0159.}

{\begin{table}[h]
\centering
\makebox[\linewidth][c]{
\footnotesize
\color{black}
\begin{tabular}{|c|c|c|c|c|c|c|}
\hline
 $p$ & $\lambda$  & \makecell{Exp.~\ref{exp:1} Var$[N_I]$\\(Random suscep.)} & \makecell{Exp.~\ref{exp:2} Var$[N_I]$\\(Random infec.)} & \makecell{Exp.~\ref{exp:3} Var$[N_I]$\\(Avg. constant)}  & \makecell{Exp.~\ref{exp:4} Var$[N_I]$\\(Fixed mean)}  & \makecell{Exp.~\ref{exp:5} Var$[N_I]$\\(Random pairwise)} \\\hline
.01 & .50 & 5.270 & 5.268 & 5.258 & 5.165 & 5.167 \\
.01 & .75 & 39.589 & 39.413 & 38.902 & 40.922 & 38.925 \\
.01 & .90 & 352.070 & 339.784 & 356.620 & 326.246 & 324.391 \\ \hline
.02 & .50 & 4.951 & 5.165 & 5.043 & 5.012 & 5.182 \\
.02 & .75 & 38.886 & 39.698 & 38.603 & 38.559 & 37.006 \\
.02 & .90 & 329.550 & 316.548 & 336.591 & 312.194 & 317.909 \\ \hline
.10 & .50 & 4.340 & 4.471 & 4.383 & 4.418 & 4.344 \\
.10 & .75 & 32.967 & 34.182 & 32.326 & 33.505 & 32.833 \\
.10 & .90 & 280.738 & 283.554 & 272.653 & 281.133 & 291.078 \\ \hline
.50 & .50 & 1.682 & 1.727 & 1.697 & 1.710 & 1.679 \\
.50 & .75 & 11.153 & 11.460 & 11.727 & 11.619 & 11.227 \\
.50 & .90 & 86.061 & 91.264 & 90.219 & 92.411 & 81.949 \\ \hline
\end{tabular}}\captionsetup{font={color=black}}\caption{For an infectious arrival in steady state the simulated variance of the number of customers they infect across different experiment set-ups.}
\label{tab:compareaveragevars}
\end{table}}

{\color{black}
In Table~\ref{tab:compareaveragevars}, we again achieve similar expected number of infection transmissions across the five experiment. Using Experiment~\ref{exp:4} as the control again, we note that the maximum average percentage delta between this experiment and another occurs for Experiment~\ref{exp:5} where it is .0321 and the minimum average percentage delta between this experiment and another occurs for Experiment~\ref{exp:2} where it is .0208.

A summary of the average percent deltas between  Experiment~\ref{exp:4} and the other experiments is in Table~\ref{tab:threshdeltapercents}.

\begin{table}[H]
\centering
\small
\color{black}
\begin{tabular}{|c|c|c|c|}
\hline
 Experiment & $\delta$ of $\mathbb{P}(I)$ & $\delta$ of $\mathbb{E}[N_I]$ & $\delta$ of Var$[N_I]$ \\\hline
 Random susceptible threshold & .0115 & .0172 & .0307 \\
 Random infectious threshold & .0176 & .0159 & .0208 \\
 Average over two constant thresholds & .0115 & .0176 & .0300 \\
 Random pairwise thresholds & .0192 & .0175 & .0321 \\ \hline
\end{tabular}\captionsetup{font={color=black}}\caption{Across our experiments, comparing the average percentage difference ($\delta$) between theoretical and experimental results to explore relative accuracy across different settings.}
\label{tab:threshdeltapercents}
\end{table}

We note from Table~\ref{tab:threshdeltapercents} that the accuracy for each case is within the same order of magnitude between the different experiment, with Experiment~\ref{exp:5}, the pairwise threshold case, having consistently higher percent difference and the single random threshold experiments having near-consistent lower percent differences. This demonstrates how the influence of the threshold distribution on the spread of infection within the queueing system is largely determined by the expected value of the distribution.}

{\color{black}
\subsection{Extension to $G/M/1$}
We can extend our work to the case of the G/M/1 queue. Our main insight is that the G/M/1 queue also has a geometric($\rho^*$) steady state queue length distribution. However, the parameter $\rho^*$ is the solution to the following equation 
\begin{eqnarray*}
\rho^* &=& \mathcal{L}_A(\mu - \mu \rho^*) .
\end{eqnarray*} 
where $\mathcal{L}_A(x)$ is the Laplace transform of the inter-arrival distribution. }

{\color{black}
\begin{remark}
In the case where the arrival process is an Erlang(2,$\lambda$), yielding an $E_2/M_{\mu}/1$ queue, we have the fixed point for \begin{eqnarray}\label{eqn:rho-erlang}
    \rho^* = \frac{\lambda}{\mu} + \frac{1}{2} - \sqrt{\frac{\lambda}{\mu} + \frac{1}{4}}.
\end{eqnarray}

Moreover, when the arrival process is deterministic, yielding a $D/M_{\mu}/1$ queue, we have the fixed point for \begin{eqnarray}\label{eqn:rho-deterministic}
    \rho^* = -\frac{W_{-1}( - e^{-\Delta \mu} \Delta \mu )}{\Delta \mu}
\end{eqnarray} where $W_{-1}(x)$ is defined as the principal branch of the Lambert $W$ function when $-1/e \leq x \leq 0$. This is the case because
\begin{eqnarray*}
\rho^* &=& \mathcal{L}_A(\mu - \mu \rho^*) \\
&=& e^{-\Delta (\mu - \mu \rho^*)} \\
&=& e^{-\Delta \mu} e^{\Delta \mu \rho^*},
\end{eqnarray*}
and by a generalization of the Lambert $W$ function, we have the above result for $\rho^*$.

Finally, when the arrival process is Hyper-Exponential$(2,\lambda_1,\lambda_2)$, yielding a $H_2/M_{\mu}/1$ queue, we get
\begin{eqnarray}\label{eqn:rho-hyperexp}
    \rho^* = \frac{\lambda_1+\lambda_2+\mu - \sqrt{\lambda_1^2+\lambda_2^2+\mu^2-2\lambda_1\lambda_2-2(2p_1-1)(\lambda_1-\lambda_2)\mu}}{2\mu}.
\end{eqnarray}This is the case as,
\begin{eqnarray*}
    \rho^* &=& \mathcal{L}_A(\mu - \mu \rho^*)\\
        &=& \int_0^{\infty}p_1\lambda_1e^{-\lambda_1 x}e^{-(\mu-\mu\rho^*)x}dx+\int_0^{\infty}(1-p_1)\lambda_2e^{-\lambda_2 x}e^{-(\mu-\mu\rho^*)x}dx\\
        &=& \frac{\lambda_1p_1}{\mu-\mu\rho^*+\lambda_1}+\frac{\lambda_2(1-p_1)}{\mu-\mu\rho^*+\lambda_2}. \end{eqnarray*}
Solving this cubic, we get only one real solution, giving us the above result for $\rho^*$.
\end{remark}

\begin{theorem}\label{thm:gm1infecprob}
For a G/M/1 queue in steady-state the probability of infection for a susceptible customer is,
\begin{eqnarray*}
    \mathbb{P}(I) &=&  \mathbb{P}(I_{ua} \cup I_{aa}) =  \frac{ 2 \rho^* p}{1 - \rho^* + 2 \rho^* p}  e^{- \mu (1 - \rho^*)  \ell},
\end{eqnarray*}
\end{theorem}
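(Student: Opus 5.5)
We will mirror the proof of Theorem~\ref{thm:total-infection-prob}, replacing the $M/M/1$ ingredients with their $G/M/1$ analogues. The first step is the standard $G/M/1$ fact that the number in system seen by an arrival is geometric, $\mathbb{P}(Q=k)=(1-\rho^*)(\rho^*)^k$, with $\rho^*=\mathcal{L}_A(\mu-\mu\rho^*)$. Conditional on $Q=k$, the sojourn time is Erlang$(k+1,\mu)$. Summing the geometric mixture shows $\mathcal{S}\sim\mathrm{Exp}(\mu(1-\rho^*))$, so $\mathbb{P}(\mathcal{S}\ge \ell)=e^{-\mu(1-\rho^*)\ell}$. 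This replaces Lemma~\ref{lem:prob-sojourn-x}. The same Bayes computation as in Lemma~\ref{lem:queue-given-sojourn} gives $\mathbb{P}(Q=r\mid \mathcal{S}=x)=e^{-\mu\rho^* x}(\mu\rho^* x)^r/r!$. In other words, given the sojourn time, the customers ahead behave as a Poisson population with rate $\mu\rho^*$ in place of $\lambda$. Note that $\mu\rho^*=\lambda$ in the $M/M/1$ case.

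Next we decompose as before. We have $\mathbb{P}(I)=e^{-\mu(1-\rho^*)\ell}-\mathbb{P}(I_{ua}^c\cap I_{aa}^c,\ \mathcal{S}\ge\ell)$, and we want to write the second term as $\int_\ell^\infty \mathbb{P}(I_{ua}^c\mid\mathcal{S}=x)\,\mathbb{P}(I_{aa}^c\mid \mathcal{S}=x)\,f_{\mathcal{S}}(x)\,dx$.

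For the upon-arrival factor, we will redo Lemma~\ref{lem:closest-infec-upon-cant-infect}. Given $\mathcal{S}=x$, the customers ahead of the tagged one form a sequence of exponential service completions. Those still present after $\ell$ units correspond to the service-completion points of a Poisson-like configuration on $[\ell,x]$ with intensity $\mu\rho^*$. Each of them is independently infectious with probability $p$, so the probability that none is infectious is $e^{-\mu\rho^* p(x-\ell)}$.

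For the after-arrival factor, the arrival process is no longer Poisson, so we cannot directly use $\mathbb{P}(N_I(x-\ell)=0)=e^{-\lambda p(x-\ell)}$. Here we will invoke the symmetry underlying Conjecture~\ref{conj:symmetry}. For a \fcfs\ single-server queue, the customers who arrive after the tagged one and overlap for at least $\ell$ are exactly those arriving during the first $x-\ell$ units of the sojourn. We argue that, conditional on $\mathcal{S}=x$, their count has the same law as the upon-arrival count. Concretely, we will show that it is Poisson with mean $\mu\rho^*(x-\ell)$, using the time-reversal/duality structure of $G/M/1$: the departure-epoch embedded chain and the arrival-epoch chain share the geometric($\rho^*$) law. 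Thinning by $p$ then gives $e^{-\mu\rho^* p(x-\ell)}$.

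Assuming both factors, the integral is
\[
\int_\ell^\infty e^{-2\mu\rho^* p(x-\ell)}\mu(1-\rho^*)e^{-\mu(1-\rho^*)x}\,dx=\frac{1-\rho^*}{1-\rho^*+2\rho^*p}e^{-\mu(1-\rho^*)\ell}.
\]
Subtracting this from $e^{-\mu(1-\rho^*)\ell}$ yields the claimed formula.

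The main obstacle is the after-arrival factor, together with the conditional independence of $I_{ua}$ and $I_{aa}$ given $\mathcal{S}$. With renewal arrivals, the post-arrival count depends on the age of the interarrival clock and is not exactly Poisson. I would first try to establish the distributional identity via the symmetry argument. If that fails, I would state the result as holding under Conjecture~\ref{conj:symmetry}, which precisely supplies equal conditional laws and independence given the sojourn time.
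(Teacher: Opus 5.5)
\textbf{The gap is the after-arrival factor, and it cannot be closed: the identity you plan to prove is false for non-Poisson arrivals.}

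Your sojourn-time facts and your upon-arrival factor are correct. Given $\mathcal{S}=x$, the departure epochs of the customers ahead form a Poisson process of rate $\mu\rho^*$ on $(0,x)$, so $\mathbb{P}(I_{ua}^c\mid\mathcal{S}=x)=e^{-\mu\rho^* p(x-\ell)}$.

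The problem is what happens after the tagged arrival. The tagged customer arrives at a renewal epoch. Under FCFS, its sojourn time and the configuration ahead of it depend only on past interarrival and service times. The later arrivals are driven by fresh interarrival times $A_1,A_2,\dots$. Hence the post-arrival stream is an ordinary renewal process independent of $\mathcal{S}$. This settles the conditional independence you were worried about; that part is easy. It also gives
\[
\mathbb{P}(I_{aa}^c\mid \mathcal{S}=x)=\mathbb{P}(T>x-\ell),\qquad T=A_1+\cdots+A_G,\quad G\sim\mathrm{Geometric}(p),
\]
which is not $e^{-\mu\rho^* p(x-\ell)}$ unless interarrivals are exponential. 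No time-reversal or duality argument can make this count Poisson with mean $\mu\rho^*(x-\ell)$.

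Put the correct factor into your integral and set $s=\mu(1-(1-p)\rho^*)$. You get
\[
\mathbb{P}(I)=e^{-\mu(1-\rho^*)\ell}\left(1-\frac{1-\rho^*}{1-(1-p)\rho^*}\cdot\frac{1-\mathcal{L}_A(s)}{1-(1-p)\mathcal{L}_A(s)}\right).
\]
A short computation shows this equals the stated expression if and only if $\mathcal{L}_A(s)=\rho^*/(1+p\rho^*)$. Requiring this for all $p\in(0,1)$ forces $\mathcal{L}_A(t)=\mu\rho^*/(t+\mu\rho^*)$ on an interval, i.e.\ Poisson arrivals. So the theorem is exact only in the $M/M/1$ case; for general $G$ it agrees with the truth only to first order in $p$.

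A concrete check: take $D/M/1$ with interarrival time $2$, $\mu=1$, $\ell=1/2$, $p=1/2$. Then $\rho^*\approx 0.2032$. The formula above gives $\approx 0.130$, while the theorem gives $\approx 0.136$. The paper's Table~\ref{tab:deterministic} reports $.129$ simulated against $.136$.

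Your fallback to Conjecture~\ref{conj:symmetry} does not rescue the argument. For $G/M/1$ its first clause does hold: both $\mathbb{P}(I_{ua})$ and $\mathbb{P}(I_{aa})$ equal $\frac{p\rho^*}{1-(1-p)\rho^*}e^{-\mu(1-\rho^*)\ell}$, using $\mathcal{L}_A(\mu(1-\rho^*))=\rho^*$. But its second clause, equality of the conditional laws given $\mathcal{S}$, fails, and that is exactly what the union computation needs.

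The paper's own proof sketch has the same hole. It correctly describes the after-arrival factor as a geometric number of interarrival times, but then asserts the $M/M/1$ form without computing it.
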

\begin{proof}
    We see that this extension is clear in the upon arrival case. Since the analysis depends upon there existing an infectious customer with sufficient remaining sojourn time (that is a sum of \textit{service} times) to infect the tagged customer, changing the arrival regime does not undermine the analysis.
    
    For the after arrival case, we can no longer first thin the arrival process. Rather, we recognize that we can count the arrivals within the timeframe of $\ell$ less than the tagged customer's sojourn window. At this time we then binomially thin the count to see the probability even a single one of those customers is infectious. We can think of this as a process that is geometric over the inter-arrival times: finding the first inter-arrival which is an infectious person, and then determining if this occurs within $\ell$ less than the sojourn time.
    
    That the upon and after arrival cases are still independent given the sojourn time is evident by the analysis in the $M/M/1$ case, completing the claim.
\end{proof}}
{\color{black}
For the mean and variance of the number of infections we have,
\begin{corollary} \label{cor:gm1meaninfec}For a $G/M/1$ queue in steady-state, the expected number of infections a single infectious customer causes is
    \begin{eqnarray*}
 \mathbb{E} \left[ N_I \right] &=& \frac{2\rho^*(1-p)}{1-\rho^*} e^{-\mu(1-\rho^*) \ell},\text{ and}    
\end{eqnarray*}
\end{corollary}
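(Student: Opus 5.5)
The plan is to mirror the $M/M/1$ route. From Theorem~\ref{thm:gm1infecprob} and its proof, we extract the distribution of the number of transmissions for a tagged customer. We then swap the roles of infectious and susceptible customers exactly as in Section~\ref{sec:moments}, and sum.

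The first step is to identify the $G/M/1$ analogue of Theorem~\ref{thm:exactly-k-infections}. Comparing Theorem~\ref{thm:gm1infecprob} with Equation~\ref{eqn:prob-infec}, the $G/M/1$ formula arises from the $M/M/1$ one by the substitutions $\lambda/\mu \mapsto \rho^*$ and $\mu-\lambda \mapsto \mu(1-\rho^*)$. These substitutions are justified because the queue length seen by an arrival is geometric$(\rho^*)$. The sojourn time is then exponential with rate $\mu(1-\rho^*)$, which replaces Lemma~\ref{lem:prob-sojourn-x}.

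Following the structure of the proof of Theorem~\ref{thm:gm1infecprob}, we condition on $\mathcal{S}=x$. On the upon-arrival side we use the memoryless service snapshot at time $\ell$, as in Lemma~\ref{lem:num-upon-given-sojourn}. On the after-arrival side we count the arrivals in the window of length $x-\ell$ and thin them binomially. The goal of this step is the geometric law
\[
\mathbb{P}(N_I=k)=\frac{1-\rho^*}{1-\rho^*+2\rho^* q}\left(\frac{2\rho^* q}{1-\rho^*+2\rho^* q}\right)^k e^{-\mu(1-\rho^*)\ell},\qquad k\ge 1.
\]
Here $q$ is the probability that a customer is of the opposite type. For a tagged infectious customer, $q=1-p$.

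Summing $k\,\mathbb{P}(N_I=k)$ then gives $\beta\,\mathrm{Li}_{-1}(1-\beta)\,e^{-\mu(1-\rho^*)\ell}$, with $\beta=\frac{1-\rho^*}{1-\rho^*+2\rho^*(1-p)}$. This is the same computation as in Theorem~\ref{thm:moments-num-infect}. Using $\mathrm{Li}_{-1}(x)=x/(1-x)^2$, this reduces to $\frac{(1-\beta)}{\beta}e^{-\mu(1-\rho^*)\ell}=\frac{2\rho^*(1-p)}{1-\rho^*}e^{-\mu(1-\rho^*)\ell}$, which is the claimed formula.

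The main obstacle is justifying the exact-$k$ law in the after-arrival part for non-Poisson arrivals. Conditioning on $\mathcal{S}$ no longer makes the future arrivals Poisson, and the arrival process is not independent of the sojourn time. To handle this, I would work with the embedded chain at arrival epochs. The customers arriving during the tagged customer's sojourn are exactly those who see the tagged customer still present. By the FCFS structure, the number of arrivals after the tagged customer before their departure is distributed like the queue length seen at a subsequent arrival epoch, which is geometric$(\rho^*)$ with the same service-sum structure. This gives the after/upon symmetry of Conjecture~\ref{conj:symmetry} in this setting.

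If the full distributional symmetry resists proof, I would fall back on linearity of expectation, which suffices for the mean. We write $\mathbb{E}[N_I]=\mathbb{E}[N_{ua}]+\mathbb{E}[N_{aa}]$. Independence between the upon-arrival and after-arrival parts is not needed for the mean. The first term is computed exactly via Lemmas~\ref{lem:cond-max-infec} and~\ref{lem:first-sojourn}, summed against the geometric$(\rho^*)$ queue length, and equals $\frac{\rho^*(1-p)}{1-\rho^*}e^{-\mu(1-\rho^*)\ell}$. For the second term we argue that it equals the first, by pairing each ordered pair (earlier customer, later customer) with overlap at least $\ell$ from both endpoints in the stationary sequence.
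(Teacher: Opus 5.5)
Your primary route cannot be completed, because its target, the geometric exact-$k$ law for $N_I$, is false for a general $G/M/1$ queue.

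Conditioning on $\mathcal{S}=x$ does keep the upon-arrival side Poisson. The Bayes computation of Lemma~\ref{lem:queue-given-sojourn}, run with a geometric$(\rho^*)$ queue length, gives $Q\mid\mathcal{S}=x\sim\mathrm{Poisson}(\mu\rho^* x)$. Hence $N_{ua}\mid\mathcal{S}=x$ is Poisson with mean $\mu\rho^*(1-p)(x-\ell)^+$.

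The after-arrival side is different. $N_{aa}\mid\mathcal{S}=x$ is a $(1-p)$-thinned count of an ordinary renewal process on $(0,(x-\ell)^+]$, and this is not Poisson unless the arrivals are. Your embedded-chain remark gives only marginal symmetry: the number of arrivals during $\mathcal{S}$ has $\mathbb{P}(\ge k)=\mathcal{L}_A(\mu(1-\rho^*))^k=(\rho^*)^k$, just like $Q$. It does not give the conditional-on-$\mathcal{S}$ symmetry you need in order to add the two counts.

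Carrying the computation through, $\mathbb{P}(N_I=0)$ depends on $\mathcal{L}_A(\mu(1-p\rho^*))$, which is not a function of $\rho^*$ alone. For example, take $D/M/1$ with $\mu=1$, interarrival time $2$ and $p=\ell=1/2$, where the susceptible and infectious perspectives coincide. There one gets $\mathbb{P}(N_I\ge 1)\approx 0.130$, not the geometric-law value $0.136$; the simulation in Table~\ref{tab:deterministic} reports $.129$.

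The paper's own justification of this corollary is exactly this $\rho^*$-substitution into the $M/M/1$ law, so it has the same defect as your primary route. The mean formula itself is nevertheless true.

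Your fallback, by contrast, is a complete and correct proof, and it should be the proof. The mean needs only first-moment symmetry, which does hold.

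For the upon-arrival term, use linearity over queue positions. This needs only indicators (or the binomial count of Lemma~\ref{lem:exactly-k-inner}), not Lemma~\ref{lem:cond-max-infec}. Since the residual service is $\mathrm{Exp}(\mu)$ and independent of $Q$,
$\mathbb{E}[N_{ua}]=(1-p)\sum_{i\ge1}(\rho^*)^i\,\mathbb{P}(R_1+\cdots+R_i\ge\ell)=\frac{\rho^*(1-p)}{1-\rho^*}e^{-\mu(1-\rho^*)\ell}$.

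Your pairing argument for $\mathbb{E}[N_{aa}]=\mathbb{E}[N_{ua}]$ is valid. Index the tagged customer as $0$ in the stationary two-sided customer sequence. Then $\mathbb{E}[N_{aa}]=(1-p)\sum_{k\ge1}\mathbb{P}(D_0-A_k\ge\ell)$ and $\mathbb{E}[N_{ua}]=(1-p)\sum_{k\ge1}\mathbb{P}(D_{-k}-A_0\ge\ell)$, and these agree term by term by shift invariance.

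If you prefer a direct computation, note that under FCFS the interarrival times $\tau_1,\tau_2,\dots$ after the tagged arrival are independent of $\mathcal{S}\sim\mathrm{Exp}(\mu(1-\rho^*))$. Therefore
$\mathbb{E}[N_{aa}]=(1-p)\sum_{n\ge1}\mathbb{P}(\tau_1+\cdots+\tau_n\le\mathcal{S}-\ell)=(1-p)e^{-\mu(1-\rho^*)\ell}\sum_{n\ge1}\mathcal{L}_A(\mu(1-\rho^*))^n$.
By the fixed-point equation $\rho^*=\mathcal{L}_A(\mu-\mu\rho^*)$, this equals the same quantity as $\mathbb{E}[N_{ua}]$. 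Adding the two terms gives the corollary.

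This argument does not carry over to the variance, where the conditional joint law of $(N_{ua},N_{aa})$ matters.
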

\begin{corollary}\label{cor:gm1varinfec}
    For a $G/M/1$ queue in steady-state, the variance of the number of infections a single infectious customer causes is
    \begin{eqnarray*}
          \mathrm{Var} \left[ N_I \right] &=& \frac{2 \rho^* (1-p) \left( 1- \rho^* + 4 \rho^* (1-p) \right)}{(1-\rho^*)^2} e^{-\mu(1-\rho^*) \ell} \nonumber \\&&- \frac{4\rho^{*^2} (1-p)^2}{(1-\rho^*)^2} e^{-2\mu (1-\rho^*) \ell}. 
    \end{eqnarray*}
\end{corollary}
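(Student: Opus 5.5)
The plan is to mirror the $M/M/1$ derivation of Equation~\ref{eqn:varnuminfec}, with $\lambda/\mu$ replaced by $\rho^*$ and $\mu-\lambda$ replaced by $\mu(1-\rho^*)$. We first establish the $G/M/1$ analogue of Theorem~\ref{thm:exactly-k-infections}, read from the infectious customer's perspective with $p$ replaced by $1-p$:
\begin{equation*}
\mathbb{P}(N_I=k)=\beta\,(1-\beta)^k e^{-\mu(1-\rho^*)\ell},\qquad \beta=\frac{1-\rho^*}{1-\rho^*+2\rho^*(1-p)},\quad k\ge 1 .
\end{equation*}
The $k=0$ mass is whatever remains. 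This is the distribution implicitly used in Theorem~\ref{thm:gm1infecprob}, whose $k=1$ tail sum gives $\mathbb{P}(I)$. Once we have it, Corollary~\ref{cor:gm1varinfec} follows from Theorem~\ref{thm:moments-num-infect} verbatim, with the new $\beta$ and the new exponential factor.

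To justify the distribution, we use the stationary $G/M/1$ facts. The number found by an arrival is geometric, $\mathbb{P}(Q=r)=(1-\rho^*)\rho^{*r}$. The sojourn time is therefore exponential with rate $\mu(1-\rho^*)$, which replaces Lemma~\ref{lem:prob-sojourn-x}.

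The upon-arrival count goes through exactly as in Lemma~\ref{lem:first-sojourn}, Lemma~\ref{lem:exactly-k-inner} and Theorem~\ref{thm:exactly-k-upon-arrival}, because those arguments only use exponential service times and the geometric law of $Q$. Substituting $\rho^*$ for $\lambda/\mu$ in the sums gives
\begin{equation*}
\mathbb{P}(N_{ua}\ge k)=\left(\frac{\rho^*(1-p)}{1-\rho^*(1-p)}\right)^k e^{-\mu(1-\rho^*)\ell}.
\end{equation*}
For the after-arrival count, we follow the proof of Theorem~\ref{thm:gm1infecprob}: count the arrivals during the tagged customer's sojourn minus $\ell$ and thin them binomially. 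This gives the same law as $N_{ua}$, in line with Conjecture~\ref{conj:symmetry}.

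For the combination step, we take the conditional independence of $N_{ua}$ and $N_{aa}$ given $\mathcal{S}$ from the proof of Theorem~\ref{thm:gm1infecprob}. We then aim to show that, conditionally on $\mathcal{S}=x\ge\ell$, each count is geometric-mixed in such a way that their sum reproduces the two-sided geometric with parameter $1-\beta$. Concretely, I would verify this by matching generating functions against the $M/M/1$ computation in Theorem~\ref{thm:exactly-k-infections}, where the conditional law is Poisson$(\lambda(1-p)(x-\ell))$. The exponent $2\lambda p$ there becomes $2\mu\rho^*(1-p)$ here. As a consistency check, $\mathbb{P}(N_I\ge 1)$ must equal the expression in Theorem~\ref{thm:gm1infecprob}.

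With the distribution in hand, the computation is short. Write $q=1-\beta=2\rho^*(1-p)/(1-\rho^*+2\rho^*(1-p))$ and $E=e^{-\mu(1-\rho^*)\ell}$. Then $\mathbb{E}[N_I]=\beta E\,\mathrm{Li}_{-1}(q)=Eq/\beta$, which is $E\cdot 2\rho^*(1-p)/(1-\rho^*)$ and recovers Corollary~\ref{cor:gm1meaninfec}. Next, $\mathbb{E}[N_I^2]=\beta E\,\mathrm{Li}_{-2}(q)=Eq(1+q)/\beta^2$. Expanding,
\begin{equation*}
\frac{q(1+q)}{\beta^2}=\frac{2\rho^*(1-p)\bigl(1-\rho^*+4\rho^*(1-p)\bigr)}{(1-\rho^*)^2}.
\end{equation*}
Subtracting $\mathbb{E}[N_I]^2$, which carries the factor $E^2$, yields exactly the stated variance.

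The main obstacle is the combination step for a general arrival process. The after-arrival count given $\mathcal{S}$ is no longer Poisson, and the conditional independence is only asserted rather than proved. I would first try to avoid needing the conditional law at all. The idea is to argue directly that the total count $N_I$ is geometric beyond the threshold, as suggested by the ``geometric over inter-arrival times'' reasoning in Theorem~\ref{thm:gm1infecprob}. I would then fix the parameter by requiring consistency with $\mathbb{P}(I)$ and with the mean in Corollary~\ref{cor:gm1meaninfec}.
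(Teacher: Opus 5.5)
Your route is the paper's route. The paper supports this corollary only by remarking that $\rho^*$ may be substituted for $\lambda/\mu$ in the $M/M/1$ formula. The argument breaks exactly at the combination step you flag, and that step is not merely unproved: for non-Poisson arrivals it is false, and so is the stated variance.

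The ingredients before that step are fine. Given $\mathcal{S}=x$, $Q$ is Poisson$(\mu\rho^* x)$ and the departure epochs of the customers ahead are uniform on $[0,x]$. Hence $N_{ua}\mid\mathcal{S}=x\sim\mathrm{Poisson}(\mu\rho^*(1-p)(x-\ell))$, and mixing gives $\mathbb{P}(N_{ua}\ge k)=\bigl(\rho^*(1-p)/(1-\rho^*p)\bigr)^k e^{-\mu(1-\rho^*)\ell}$. Your display kept the susceptible-perspective denominator $1-\rho^*(1-p)$, which is a slip. Conditional independence given $\mathcal{S}$ is also easy, since the inter-arrival times after the tagged arrival are independent of everything that determines $\mathcal{S}$.

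The trouble is $N_{aa}$. Given $\mathcal{S}=x$, it counts the points in $(0,x-\ell]$ of a renewal process with inter-renewal time $B=\sum_{i=1}^{G}A_i$, where $G$ is geometric with success probability $1-p$. So $\mathcal{L}_B(s)=(1-p)\mathcal{L}_A(s)/(1-p\mathcal{L}_A(s))$. Put $Y=\mathcal{S}-\ell\sim\mathrm{Exp}(\theta)$ with $\theta=\mu(1-\rho^*)$. The fixed-point equation then gives $\mathbb{P}(N_{aa}\ge k\mid\mathcal{S}\ge\ell)=\mathcal{L}_B(\theta)^k=\bigl(\rho^*(1-p)/(1-\rho^*p)\bigr)^k$. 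So the two marginals do agree, and Corollary~\ref{cor:gm1meaninfec} holds by linearity.

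The second moment, however, needs the cross term. There the conditional means differ: one is $\mu\rho^*(1-p)Y$, the other is the renewal function $m_B(Y)$. Apply $\mathbb{E}[Y\mathbf{1}\{Y\ge s\}]=e^{-\theta s}(s+1/\theta)$ term by term in $m_B(y)=\sum_{k\ge1}\mathbb{P}(B_1+\cdots+B_k\le y)$. This gives
\[
\mathbb{E}[N_{ua}N_{aa}\mid\mathcal{S}\ge\ell]=\frac{\rho^*(1-p)^2(\rho^*+\kappa)}{(1-\rho^*)^2},\qquad \kappa=\mu\,\mathbb{E}\bigl[Ae^{-\mu(1-\rho^*)A}\bigr]=-\mu\mathcal{L}_A'\bigl(\mu(1-\rho^*)\bigr).
\]
A geometric total with parameter $\beta$ would force this to equal $2(\rho^*)^2(1-p)^2/(1-\rho^*)^2$, i.e.\ $\kappa=\rho^*$.

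Hence the exact $G/M/1$ variance is
\[
\mathrm{Var}[N_I]=(\text{stated expression})+\frac{2\rho^*(1-p)^2(\kappa-\rho^*)}{(1-\rho^*)^2}\,e^{-\mu(1-\rho^*)\ell}.
\]
The condition $\kappa=\rho^*$ holds for exponential inter-arrivals but not in general. For deterministic inter-arrival time $d$, $\kappa=\mu d\rho^*>\rho^*$. Take $\mu=1$, $d=2$, $\ell=1/2$:
\begin{itemize}
\item at $p=1/2$ the stated formula gives $0.229$, the corrected value is $0.251$, and Table~\ref{tab:deterministic} reports a simulated $0.249$;
\item at $p=0.01$ the stated formula gives $0.566$, the corrected value is $0.652$, and the simulation reports $0.657$.
\end{itemize}

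The same defect undermines your consistency check and your fallback. One finds $\mathbb{P}(N_I=0\mid\mathcal{S}\ge\ell)=\frac{1-\rho^*}{1-\rho^*p}\cdot\frac{1-L}{1-pL}$ with $L=\mathcal{L}_A(\mu(1-\rho^*p))$. This equals $\beta$ only in the Poisson case, so Theorem~\ref{thm:gm1infecprob} is itself only approximate. Since the (correct) mean forces the parameter to be $\beta$, no geometric law can match both the mean and $\mathbb{P}(N_I\ge 1)$. What you can actually prove is the corrected formula above, which reduces to the stated corollary exactly when $\kappa=\rho^*$.
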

For the moments of the number of infection transmissions, the validity of substituting $\rho^*$ for $\lambda$ follows straightforwardly from these being observation counts.

\subsubsection{Numerical results}

{\color{black} Our simulation framework is as follows:}

For each inter-arrival distribution, we simulated the queue over a lifetime of 500,000 customers. For each set of parameters, we considered the average of five simulations. For all cases, we fixed $\mu = 1$ and $\ell = 0.5$. {\color{black}Again, note that as $\frac{\lambda}{\mu}\rightarrow 1$, it takes longer to reach a steady state, hence we expect to see less accurate simulations as $\lambda$ increases, especially for the higher moments. In Table~\ref{tab:deterministic} we explore the $G/M/1$ queue with deterministic($\lambda$) inter-arrivals. In Table~\ref{tab:erlang} we explore the $G/M/1$ queue with Erlang$\left(2,\frac{\lambda}{2}\right)$ inter-arrivals. In Table~\ref{tab:hyper} we explore the $G/M/1$ queue with Hyper-Exponential$(2,\lambda_1=1,\lambda_2=0.25)$ inter-arrivals. Finally, in Table~\ref{tab:comparedeltapercents} we compare the across the four different arrival processes the average percent error between the simulation and theoretical results.}

Firstly, for a deterministic inter-arrival distribution we simply allowed the parameter $\Delta$ to take on the same values as $\lambda$ did for the $M/M/1$ experiments, in order to preserve the same means for the sake of comparison.

\begin{table}[h]
\centering
\small
\begin{tabular}{|c|c|c|c|c|c|c|c|}
\hline
$p$ & $\Delta$ & \makecell{$\mathbb{P}(I)$\\ (Sim.)}&  \makecell{$\mathbb{P}(I)$\\ (Thm.~\ref{thm:gm1infecprob})}&\makecell{$\mathbb{E}[N_I]$\\ (Sim.)} & \makecell{$\mathbb{E}[N_I]$\\ (Cor.~\ref{cor:gm1meaninfec})}  & \makecell{Var$[N_I]$\\ (Sim.)}  & \makecell{Var$[N_I]$\\ (Cor.~\ref{cor:gm1varinfec})} \\\hline
.01 & .50 & .00337 & .00341 & .337 & .339 & .657 & .566\\

.01 & .75 & .0187 & .0187 & 1.899 & 1.894 & 8.197 & 7.313\\

.01 & .90 & .0713 & .0700 & 7.719 & 7.512 & 84.515 & 75.387\\\hline

.02 & .50 & .00677 & .00678 & .334 & .336 & .628 & .558\\

.02 & .75 & .0362 & .0365 & 1.864 & 1.875 & 7.864 & 7.185\\

.02 & .90 & .128 & .130 & 7.380 & 7.436 & 73.380 & 73.947\\\hline

.10 & .50 & .0318 & .0326 & .305 & .308 & .562 & .496\\

.10 & .75 & .152 & .154 & 1.717 & 1.722 & 6.793 & 6.200\\

.10 & .90 & .412 & .413 & 6.928 & 6.829 & 66.584 & 62.924\\\hline

.50 & .50 & .129 & .136 & .171 & .171 & .249 & .229\\

.50 & .75 & .416 & .435 & .957 & .957 & 2.542 & 2.339\\

.50 & .90 & .719 & .733 & 3.844 & 3.794 & 22.253 & 21.107\\\hline
\end{tabular}
    \caption{Simulation results for an arrival in the $D/M_{\mu}/1$ queue in steady state.}
    \label{tab:deterministic}
\end{table}

{\color{black} Note from Table~\ref{tab:deterministic} that the values are consistently lower than in the $M/M/1$ case from Tables~\ref{tab:probs} and \ref{tab:meanvar}, and, for lower utilization especially, the values are \textit{substantially} lower. This is intuitive as the lessened volatility from lower utilization combines with the stability of deterministic arrivals to ensure a lower incidence of long (and thus potentially infecting) overlaps.}

Next, for an Erlang-2 distribution, in order to preserve the means between different arrival processes, we scaled the values of $\lambda$ from the $M/M/1$ experiments by a factor of 2.

\begin{table}[h]
\centering
\small
\begin{tabular}{|c|c|c|c|c|c|c|c|}
\hline
$p$ & $\frac{\lambda}{2}$ & \makecell{$\mathbb{P}(I)$\\ (Sim.)}&  \makecell{$\mathbb{P}(I)$\\ (Thm.~\ref{thm:gm1infecprob})}&\makecell{$\mathbb{E}[N_I]$\\ (Sim.)} & \makecell{$\mathbb{E}[N_I]$\\ (Cor.~\ref{cor:gm1meaninfec})}  & \makecell{Var$[N_I]$\\ (Sim.)}  & \makecell{Var$[N_I]$\\ (Cor.~\ref{cor:gm1varinfec})} \\\hline
.01 & .50 & .00899 & .00896 & .902 & .898 & 2.455 & 2.290\\

.01 & .75 & .0344 & .0343 & 3.537 & 3.533 & 21.143 & 20.393\\

.01 & .90 & .107 & .109 & 12.096 & 12.213 & 177.829 & 181.688\\\hline

.02 & .50 & .0176 & .0177 & .889 & .889 & 2.377 & 2.253\\

.02 & .75 & .0658 & .0659 & 3.507 & 3.498 & 21.240 & 20.018\\

.02 & .90 & .194 & .195 & 11.979 & 12.090 & 170.818 & 178.158\\\hline

.10 & .50 & .0802 & .0808 & .819 & .817 & 2.086 & 1.967\\

.10 & .75 & .249 & .251 & 3.198 & 3.212 & 17.605 & 17.145\\

.10 & .90 & .531 & .532 & 11.180 & 11.103 & 153.741 & 151.165\\\hline

.50 & .50 & .275 & .280 & .453 & .454 & .840 & .809\\

.50 & .75 & .568 & .576 & 1.796 & 1.785 & 6.328 & 6.085\\

.50 & .90 & .807 & .813 & 6.200 & 6.168 & 50.596 & 49.397\\\hline

\end{tabular}
    \caption{Simulation results for an arrival in the $E_2/M_{\mu}/1$ queue in steady state.}
    \label{tab:erlang}
\end{table}

{\color{black}
We can again see from Table~\ref{tab:erlang} that the Erlang-2 arrival process ensures lower infection probabilities and moments of the number of infections as compared to a Poisson process demonstrated in Tables~\ref{tab:probs} and \ref{tab:meanvar}, while as expected the values are nonetheless higher than in a deterministic arrival process, as in Table~\ref{tab:deterministic}. We overall expect these results due to deterministic processes being most stable, while the Erlang-2 generalization of the exponential arrival process increases the variability above that of a deterministic setting, but still not as much as in the exponential cases.}

Finally, for Hyper-Exponential(2) inter-arrival times, with probability $p_1$ we drew from an exponential distribution with parameter $\lambda_1=1$, and respectively drew with probability $1-p_1$ with parameter $\lambda_2=0.25$, such that we vary $p_1$ as $2/3, 8/9,$ and $26/27$ to achieve equivalent means as in the $M/M/1$ experiments.

\begin{table}[h]
\centering
\small
\begin{tabular}{|c|c|c|c|c|c|c|c|}
\hline
$p$ & $p_1$ & \makecell{$\mathbb{P}(I)$\\ (Sim.)}&  \makecell{$\mathbb{P}(I)$\\ (Thm.~\ref{thm:gm1infecprob})}&\makecell{$\mathbb{E}[N_I]$\\ (Sim.)} & \makecell{$\mathbb{E}[N_I]$\\ (Cor.~\ref{cor:gm1meaninfec})}  & \makecell{Var$[N_I]$\\ (Sim.)}  & \makecell{Var$[N_I]$\\ (Cor.~\ref{cor:gm1varinfec})} \\\hline
.01 & $2/3$ & .0250 & .0249 & 2.544 & 2.544 & 11.266 & 11.803\\

.01 & $8/9$ & .0718 & .0716 & 7.673 & 7.699 & 76.343 & 78.758\\

.01 & $26/27$ & .174 & .173 & 21.010 & 20.891 & 523.514 & 494.469\\\hline

.02 & $2/3$ & .0481 & .0484 & 2.508 & 2.518 & 11.132 & 11.591\\

.02 & $8/9$ & .131 & .133 & 7.550 & 7.621 & 74.114 & 77.252\\

.02 & $26/27$ & .293 & .293 & 20.541 & 20.680 & 471.773 & 484.740\\\hline

.10 & $2/3$ & .196 & .196 & 2.316 & 2.312 & 9.647 & 9.965\\

.10 & $8/9$ & .422 & .419 & 7.069 & 6.999 & 66.725 & 65.725\\

.10 & $26/27$ & .658 & .659 & 18.941 & 18.992 & 406.854 & 410.379\\\hline

.50 & $2/3$ & .508 & .502 & 1.290 & 1.285 & 3.573 & 3.646\\

.50 & $8/9$ & .742 & .737 & 3.912 & 3.888 & 21.850 & 22.014\\

.50 & $26/27$ & .882 & .879 & 10.539 & 10.551 & 128.780 & 131.350\\\hline

\end{tabular}
    \caption{Simulation results for an arrival in the $H_2/M_{\mu}/1$ queue in steady state.}
    \label{tab:hyper}
\end{table}

{\color{black} Looking at Table~\ref{tab:hyper}, the Hyper-Exponential(2) arrival process clearly gives us the highest infection probabilities and moments of the number of infections in comparison to the other three arrival processes explored in Tables~\ref{tab:probs}, \ref{tab:meanvar}, \ref{tab:deterministic}, and \ref{tab:erlang}. This continues the analysis that demonstrates low stability and higher variability within an arrival process contributes predominantly to increase the infection probability in a system. We see this occurring as a consequence of lighter left-tails and heavier right tails in the inter-arrival distributions causing an increase in the variance in the overlap times, and especially in the right tail of the overlap time distribution (so there are both more and longer overlaps), thus increasing the number of overlaps that are long enough to cause an infection should the customers form a susceptible-infectious pair.

As an additional consequence of this volatility, we expect a similar trend in the accuracy of the simulations after 500,000 arrivals; that is in part, the reasonableness of assuming that 500,000 customers allows the queue to achieve a steady-state. A brief insight into this question is handled in Table~\ref{tab:comparedeltapercents} where, for each of the four inter-arrival disciplines, we compare the average of the normalized absolute difference between the theoretical and simulation values of the infection probability and of both moments of the number infected reported across Tables~\ref{tab:probs}, \ref{tab:meanvar}, \ref{tab:deterministic}, \ref{tab:erlang}, and \ref{tab:hyper}.

\begin{table}[h]
\centering
\small
\color{black}
\begin{tabular}{|c|c|c|c|}
\hline
 Setting & $\delta$ of $\mathbb{P}(I)$ & $\delta$ of $\mathbb{E}[N_I]$ & $\delta$ of Var$[N_I]$ \\\hline
 $M/M/1$ & .00399 & .00786 & .0235 \\
 $D/M/1$ & .0175 & .00822 & .0955 \\
 $E_2/M/1$ & .00778 & .00452 & .0412 \\
 $H_2/M/1$ & .00539 & .00456 & .0287 \\ \hline
\end{tabular}\captionsetup{font={color=black}}\caption{Across our experiments, comparing the average percentage difference ($\delta$) between theoretical and experimental results to explore relative accuracy across different settings.}
\label{tab:comparedeltapercents}
\end{table}

The key note from these comparisons to take from Table~\ref{tab:comparedeltapercents} is that the accuracy is largely within the same order of magnitude between the different regimes, with a slight bias towards the higher variability settings.}

\subsection{Extension to $M/G/1$}

We can also obtain an approximate expression for the infection probability in the $M/G/1$ setting by leveraging the structural decomposition of the $M/M/1$ case.  In the $M/M/1$ queue, the infection probability splits into two components:
\begin{enumerate}
    \item the probability that the sojourn time exceeds the fixed threshold~$\ell$, and
    \item the probability that the sojourn time exceeds an exponential random variable 
    with rate $2\lambda p$, which is equivalently the sojourn-time MGF evaluated at 
    $2\lambda p$:
    \[
        \mathbb{P}(\Soj > X)
            = \mathbb{E}\!\left[e^{-2\lambda p \Soj}\right]
            = M_{\Soj}(2\lambda p).
    \]
\end{enumerate}
This observation provides a natural approximation strategy for the $M/G/1$ queue, which implies that we retain the same decomposition, replacing the $M/M/1$ MGF by the Pollaczek-Khinchine MGF for the $M/G/1$, and approximate 
$\mathbb{P}(\Soj > \ell)$ using standard tail approximations.  This yields a simple and accurate approximation consistent with the structure of the exact $M/M/1$ formula.

Motivated by this structure, we approximate the M/G/1 infection probability using the same two ingredients:  
the tail probability $\mathbb{P}(\mathcal{S} > \ell)$ and the MGF of the sojourn time at $2\lambda p$.  
This yields the approximation
\begin{eqnarray}\label{eqn:mg1probapprox}    \mathbb{P}(I^c)
    \;\approx\;
    \mathbb{E}\!\left[ e^{-2\lambda p\,(\mathcal{S}-\ell)^+} \right].
\end{eqnarray}

This expression has a natural intuitive interpretation.  
From the perspective of customers who arrive after the tagged susceptible customer, the only arrivals that can infect the tagged customer are those who enter during the time window $(\mathcal{S}-\ell)^+$, where $\mathcal{S}$ is the sojourn time of the tagged customer.  
To avoid infection, no infectious customers may arrive during this window.  
Since infectious customers arrive according to a Poisson process with rate $\lambda p$, we require
\[
    \mathbb{P}\!\left( N_{\lambda p}\big( (\mathcal{S}-\ell)^+ \big) = 0 \right).
\]

We must also consider infectious customers who may have arrived \emph{before} the tagged customer, leading to an additional independent contribution of the same form.  
Combining the two independent sources yields
\begin{eqnarray*}
 \mathbb{P}\!\left(
        N_{2\lambda p}\big( (\mathcal{S}-\ell)^+ \big) = 0
    \right)
    \;=\;
    \mathbb{E}\!\left[ e^{-2\lambda p\,(\mathcal{S}-\ell)^+} \right].   
\end{eqnarray*}

Before exploring the higher moments, we first bound the expected sojourn time.\begin{proposition} \label{prop:mg1bound}
Let $\mathcal{S}$ be a nonnegative random variable and let $\ell \ge 0$.  
Then the following bounds hold:
\[
\left( \mathbb{E}[\mathcal{S}] - \ell \right)^+
\;\le\;
\mathbb{E}\!\left[(\mathcal{S} - \ell)^+\right]
\;\le\;
\mathbb{E}[\mathcal{S}] - \ell\,\mathbb{P}(\mathcal{S} > \ell)
\;\le\;
\mathbb{E}[\mathcal{S}].
\]
\begin{proof}
We start from the identity
\[
(\mathcal{S} - \ell)^+ 
= \mathcal{S} \mathbf{1}_{\{\mathcal{S} > \ell\}} 
- \ell\,\mathbf{1}_{\{\mathcal{S} > \ell\}}.
\]

Taking expectations gives
\[
\mathbb{E}\!\left[(\mathcal{S} - \ell)^+\right]
= \mathbb{E}\!\left[\mathcal{S} \mathbf{1}_{\{\mathcal{S} > \ell\}}\right]
- \ell\,\mathbb{P}(\mathcal{S} > \ell).
\]

Since 
\[
\ell\,\mathbb{P}(\mathcal{S} > \ell) 
\le 
\mathbb{E}\!\left[\mathcal{S} \mathbf{1}_{\{\mathcal{S} > \ell\}}\right]
\le 
\mathbb{E}[\mathcal{S}],
\]
we immediately obtain the upper bound
\[
0 \le 
\mathbb{E}\!\left[(\mathcal{S} - \ell)^+\right]
\le 
\mathbb{E}[\mathcal{S}] - \ell\,\mathbb{P}(\mathcal{S} > \ell).
\]

For a lower bound, note that
\[
\mathbb{E}\!\left[\mathcal{S} \mathbf{1}_{\{\mathcal{S} > \ell\}}\right]
= \mathbb{E}[\mathcal{S}]
- \mathbb{E}\!\left[\mathcal{S} \mathbf{1}_{\{\mathcal{S} \le \ell\}}\right]
\ge 
\mathbb{E}[\mathcal{S}] - \ell\big(1 - \mathbb{P}(\mathcal{S} > \ell)\big),
\]
which implies
\[
\mathbb{E}\!\left[(\mathcal{S} - \ell)^+\right]
\ge 
\mathbb{E}[\mathcal{S}] - \ell.
\]

Combining both results yields the stated inequalities.
\end{proof}
\end{proposition}}

{\color{black}For the mean number infected, we now conjecture the following expression, 
\begin{conjecture}\label{conj:mg1meaninfec}For an infectious arrival to the $M/G/1$ queue in steady state, the number of susceptible customers this tagged customer infects is
\begin{eqnarray*}
    \mathbb{E}\left[ N_I \right] &=& 2 \lambda (1-p) \mathbb{E} \left[ (\mathcal{S} - \ell)^+ \right].
\end{eqnarray*}
   If Conjecture~\ref{conj:symmetry} is true, and especially noting its second clause, then this result follows from this type of argument:
   \begin{eqnarray*}
    \mathbb{E}\left[ N_I \right] &=& \sum^{\infty}_{k=0} \mathbb{E}\left[ N_I | Q = k, \mathcal{S}_k = x \right] \mathbb{P}\left[ Q = k, \mathcal{S}_k = x \right]  \nonumber\\
    &=& 2  \sum^{\infty}_{k=0} \int^{\infty}_{\ell} \mathbb{E}\left[ N_{aa} | Q = k, \mathcal{S}_k = x \right] \mathbb{P}\left[ Q = k \right] f_{\mathcal{S}_k}(x) dx  \quad(\text{Conj.~\ref{conj:symmetry}})\nonumber\\
    &=& 2 \sum^{\infty}_{k=0} \left( \int^{\infty}_{\ell} \lambda (1-p)  (x-\ell)  f_{\mathcal{S}_k}(x) dx \right) \mathbb{P}\left[ Q = k \right] \nonumber\\
    &=& 2 \lambda (1-p)  \sum^{\infty}_{k=0} \mathbb{E} \left[ (\mathcal{S}_k - \ell)^+ \right] \mathbb{P}\left[ Q = k \right] \nonumber\\
    &=& 2 \lambda (1-p) \mathbb{E} \left[ (\mathcal{S} - \ell)^+ \right]. 
\end{eqnarray*}  
\end{conjecture}

Supporting this, note that it follows from Proposition \ref{prop:mg1bound} that 
\begin{eqnarray}\label{eqn:mg1expecbound}
  2 \lambda (1-p) \left( \mathbb{E}\left[ \mathcal{S} \right] - \ell \right)^+  &\leq& \mathbb{E}\left[ N_I \right] \leq 2 \lambda (1-p) \left( \mathbb{E}\left[ \mathcal{S} \right] - \ell \mathbb{P} \left( \mathcal{S} > \ell \right) \right) . \nonumber
\end{eqnarray}

Finally, for the variance we have the following approximation, 
\begin{eqnarray}
    \mathrm{Var}\left[ N_I \right] &\approx& \mathrm{Var}\left[ N_{2\lambda (1-p)}\big( (\mathcal{S}-\ell)^+ \big) \right] \nonumber\\
    &=& 2 \lambda (1-p) \mathbb{E} \left[ (\mathcal{S}-\ell)^+ \right] + 4 \lambda^2 (1-p)^2 \mathrm{Var}\left [(\mathcal{S}-\ell)^+ \right].\label{eqn:mg1varinfec}
\end{eqnarray}}

{\color{black}
\subsubsection{Numerical results}

{\color{black} Our simulation framework is as follows:}

For each service distribution, we simulated the queue over a lifetime of 500,000 customers. For each set of parameters, we considered the average of five simulations. For all cases, we fixed $\mu = 1$ and $\ell = 0.5$. {\color{black}Again, note that as $\frac{\lambda}{\mu}\rightarrow 1$, it takes longer to reach a steady state, hence we expect to see less accurate simulations as $\lambda$ increases, especially for the higher moments. In Table~\ref{tab:deterministic-serve} we explore the $M/G/1$ queue with deterministic service times. In Table~\ref{tab:erlang-serve} we explore the $M/G/1$ queue with Erlang$\left(2,\frac{\mu}{2}\right)$ service times. In Table~\ref{tab:hyper-serve} we explore the $M/G/1$ queue with Hyper-Exponential$(2,\mu_1=2,\lambda_2=0.25)$ service times. In Table~\ref{tab:twopoint} we explore the $M/G/1$ queue with Symmetric 2-Point Distribution service times. Finally, in Section~\ref{sec:Qstable} we compare the across the four different service time distributions when we fix the average queue lengths to be the same for each distribution.}

For each service distribution, we simulate the queue over a lifetime of 100,000 customers. For each set of parameters, we considered the average of 5 simulations for each set of parameters. For all cases, we set $\ell = 0.5$. 

We additionally calculated the sojourn times over 50,000,000 customers using the Lindley recursion, and using this simulated the approximation equations for the probability of infection, mean number infected, and variance of the number infected. 

Firstly, for a deterministic service distribution, we simply allowed the parameter $\Delta = 1$ as in the $M/M/1$ queue, in order to preserve the same mean.

\begin{table}[h]
\centering
\begin{tabular}{|c|c|c|c|c|c|c|c|}
\hline
 $p$ & $\lambda$ & \makecell{$\mathbb{P}(I)$\\ (Sim.)}& \makecell{$\mathbb{P}(I)$\\(Eqn.~\ref{eqn:mg1probapprox})} & \makecell{$\mathbb{E}[N_I]$\\ (Sim.)}  & \makecell{$\mathbb{E}[N_I]$\\ (Conj.~\ref{conj:mg1meaninfec})} & \makecell{Var$[N_I]$\\(Sim.)}& \makecell{Var$[N_I]$\\ (Eqn.~\ref{eqn:mg1varinfec})}\\\hline
.01 & .50 & .00974 & .00992 & .996 & .990 & 1.847 & 1.561\\

.01 & 75 & .0288 & .0292 & 2.951 & 2.970 & 11.105 & 10.136\\

.01 & .90 & .0836 & .0828 & 9.029 & 8.893 & 91.608 & 82.231\\\hline

.02 & .50 & .0198 & .0197 & .984 & .980 & 1.849 & 1.540\\

.02 & .75 & .0565 & .0569 & 2.934 & 2.941 & 11.020 & 9.964\\

.02 & .90 & .156 & .153 & 8.976 & 8.811 & 85.727 & 80.900\\\hline

.10 & .50 & .0914 & .0927 & .900 & .900 & 1.602 & 1.372\\

.10 & .75 & .232 & .236 & 2.706 & 2.699 & 9.838 & 8.611\\

.10 & .90 & .474 & .482 & 8.039 & 8.091 & 70.934 & 68.915\\\hline

.50 & .50 & .339 & .358 & .500 & .500 & .720 & .646\\

.50 & .75 & .610 & .636 & 1.504 & 1.500 & 3.659 & 3.330\\

.50 & .90 & .825 & .843 & 4.604 & 4.496 & 26.138 & 23.270\\\hline

\end{tabular}
\caption{Simulation results for an arrival in the $M_{\lambda}/D/1$ queue in steady state.}
    \label{tab:deterministic-serve}
\end{table}

{\color{black} We note from Table~\ref{tab:deterministic-serve} that higher utility decreases the accuracy of the formulas; though we see a lesser degree of this than we might expect across other regimes, due to the low variability in overlap times and frequency created by the fact that the deterministic regime allows for no variance in service (and thus remaining) service times.}

For Erlang-2 service times, in order to preserve the mean, we scaled $\mu$ by a factor of 2; that is, $\mu = 2$.

\begin{table}[h]
\centering
\begin{tabular}{|c|c|c|c|c|c|c|c|}
\hline
 $p$ & $\lambda$ & \makecell{$\mathbb{P}(I)$\\ (Sim.)}& \makecell{$\mathbb{P}(I)$\\(Eqn.~\ref{eqn:mg1probapprox})} & \makecell{$\mathbb{E}[N_I]$\\ (Sim.)}  & \makecell{$\mathbb{E}[N_I]$\\ (Conj.~\ref{conj:mg1meaninfec})} & \makecell{Var$[N_I]$\\(Sim.)}& \makecell{Var$[N_I]$\\ (Eqn.~\ref{eqn:mg1varinfec})}\\\hline
.01 & .50 & .0125 & .0126 & 1.266 & 1.265 & 3.390 & 3.209\\

.01 & .75 & .0397 & .0397 & 4.114 & 4.107 & 23.219 & 22.791\\

.01 & .90 & .116 & 0.115 & 13.068 & 12.923 & 197.346 & 187.310\\\hline

.02 & .50 & .0247 & .0249 & 1.259 & 1.252 & 3.346 & 3.158\\

.02 & .75 & .0759 & .0763 & 4.056 & 4.067 & 22.960 & 22.433\\

.02 & .90 & .209 & .206 & 13.054 & 12.799 & 194.511 & 184.309\\\hline

.10 & .50 & .111 & .112 & 1.150 & 1.150 & 2.906 & 2.759\\

.10 & .75 & .286 & .288 & 3.680 & 3.733 & 19.097 & 19.191\\

.10 & .90 & .556 & .559 & 11.736 & 11.757 & 160.549 & 156.001\\\hline

.50 & .50 & .366 & .373 & .639 & .639 & 1.185 & 1.135\\

.50 & .75 & .640 & .648 & 2.077 & 2.073 & 7.138 & 6.847\\

.50 & .90 & .843 & .849 & 6.584 & 6.536 & 54.840 & 51.246\\\hline

\end{tabular}
\caption{Simulation results for an arrival to the $M_{\lambda}/E_2/1$ queue in steady state.}
    \label{tab:erlang-serve}
\end{table}

{\color{black} We again have in Table~\ref{tab:erlang-serve} lower accuracy at higher utility for the formulas; largely as a consequence of it taking longer to reach the steady state in these dynamics. This similarly shows up in the variance being most inaccurate, since there is higher variability before the steady state, such that in the simulation the system is not necessarily reaching the steady state level of lower behavioral variance within these 500,000 customers.}

For Hyper-Exponential(2) service times, we draw from an exponential distribution with parameter $\mu_1=2$ with probability $p_1$ and respectively draw with probability $1-p_1$ from an exponential with parameter $\mu_2=0.25$, such that $p_1 = 6/7$ to achieve the equivalent mean as in the prior cases.

\begin{table}[h]
\centering
\begin{tabular}{|c|c|c|c|c|c|c|c|}
\hline
 $p$ & $\lambda$ & \makecell{$\mathbb{P}(I)$\\ (Sim.)}& \makecell{$\mathbb{P}(I)$\\(Eqn.~\ref{eqn:mg1probapprox})} & \makecell{$\mathbb{E}[N_I]$\\ (Sim.)}  & \makecell{$\mathbb{E}[N_I]$\\ (Conj.~\ref{conj:mg1meaninfec})} & \makecell{Var$[N_I]$\\(Sim.)}& \makecell{Var$[N_I]$\\ (Eqn.~\ref{eqn:mg1varinfec})}\\\hline
.01 & .50 & .0297 & .0291 & 3.062 & 3.055 & 26.348 & 30.658\\

.01 & .75 & .104 & .103 & 11.926 & 11.939 & 252.186 & 265.519\\

.01 & .90 & .281 & 0.282 & 40.637 & 41.063 & 2122.171 & 2186.947\\\hline

.02 & .50 & .0552 & .0550 & 2.993 & 3.024 & 26.539 & 30.042\\

.02 & .75 & .184 & .181 & 11.959 & 11.800 & 247.087 & 259.090\\

.02 & .90 & .432 & .427 & 41.638 & 40.692 & 2280.029 & 2168.548\\\hline

.10 & .50 & .198 & .193 & 2.756 & 2.776 & 22.682 & 25.569\\

.10 & .75 & .469 & .455 & 11.148 & 10.857 & 222.709 & 220.026\\

.10 & .90 & .731 & .730 & 35.357 & 37.351 & 1510.140 & 1814.379\\\hline

.50 & .50 & .420 & .406 & 1.560 & 1.544 & 8.018 & 8.589\\

.50 & .75 & 0.685 &	.675 & 6.043 & 6.041 & 66.620 & 70.920\\

.50 & .90 & .868 & .862 & 20.862 & 20.657 & 559.507 & 568.866\\\hline
\end{tabular}
\caption{Simulation results for an arrival to the $M_{\lambda}/H_2/1$ queue in steady state.}
    \label{tab:hyper-serve}
\end{table}

{\color{black} The results shown in Table~\ref{tab:hyper-serve} are consistent with the earlier $M/G/1$ cases, though the much higher level of variance inherent to this service distribution is apparent in both the greater values of the results and the lower accuracy of the simulations over the same number of arrivals.}

Finally, for service times distributed according to a symmetric 2-point distribution, we use $\delta \in \{0.25, 0.50, 0.90\}$ such that $\mu_1 \in \{0.75, 0.50, 0.10\}$ and respectively $\mu_2 \in \{1.25, 1.50, 1.90\}$, thus maintaining the same mean as prior experiments. Here for the reader's ease we split up our table into four separate sub-tables according to the proportion of infectious arrivals of the system.}

{\begin{table}[p]
{\begin{tabular}{|c|c|c|c|c|c|c|c|}
\hline
$\lambda$ & $\delta$  & \makecell{$\mathbb{P}(I)$\\ (Sim.)}& \makecell{$\mathbb{P}(I)$\\(Eqn.~\ref{eqn:mg1probapprox})} & \makecell{$\mathbb{E}[N_I]$\\ (Sim.)}  & \makecell{$\mathbb{E}[N_I]$\\ (Conj.~\ref{conj:mg1meaninfec})} & \makecell{Var$[N_I]$\\(Sim.)}& \makecell{Var$[N_I]$\\ (Eqn.~\ref{eqn:mg1varinfec})}\\\hline

.50 & .25 & .0103 & .0102 & 1.025 & 1.021 & 1.997 & 1.746\\

.50 & .50 & .0111 & .0111 & 1.111 & 1.114 & 2.550 & 2.312\\

.50 & .90 & .0150 & .0149 & 1.497 & 1.498 & 4.057 & 3.869\\\hline

.75 & .25 & .0310 & .0305 & 3.131 & 3.109 & 13.067 & 11.464\\

.75 & .50 & .0349 & .0344 & 3.551 & 3.527 & 16.948 & 15.685\\

.75 & .90 & .0465 & .0467 & 4.858 & 4.858 & 30.516 & 29.625\\\hline

.90 & .25 & .0880 & .0870 & 9.459 & 9.412 & 98.046 & 93.732\\

.90 & .50 & .0992 & .0994 & 10.845 & 10.920 & 132.551 & 128.883\\

.90 & .90 & .136 & .135 & 15.407 & 15.439 & 256.670 & 261.104\\\hline
\end{tabular}
    \subcaption{$p = 0.01.$}
    \label{tab:01twopoint}}
{\begin{tabular}{|c|c|c|c|c|c|c|c|}
\hline
$\lambda$ & $\delta$  & \makecell{$\mathbb{P}(I)$\\ (Sim.)}& \makecell{$\mathbb{P}(I)$\\(Eqn.~\ref{eqn:mg1probapprox})} & \makecell{$\mathbb{E}[N_I]$\\ (Sim.)}  & \makecell{$\mathbb{E}[N_I]$\\ (Conj.~\ref{conj:mg1meaninfec})} & \makecell{Var$[N_I]$\\(Sim.)}& \makecell{Var$[N_I]$\\ (Eqn.~\ref{eqn:mg1varinfec})}\\\hline

.50 & .25 & .0200 & .0202 & 1.008 & 1.011 & 2.013 & 1.722 \\

.50 & .50 & .0219 & .0220 & 1.101 & 1.102 & 2.507 & 2.278\\

.50 & .90 & .0294 & .0293 & 1.483 & 1.482 & 3.979 & 3.802\\\hline

.75 & .25 & .0597 & .0593 & 3.082 & 3.078 & 12.388 & 11.271 \\

.75 & .50 & .0656 & .0665 & 3.443 & 3.490 & 15.981 & 15.398\\

.75 & .90 & .0885 & .0891 & 4.800 & 4.807 & 29.825 & 29.038\\\hline

.90 & .25 & .160 & .161 & 9.268 & 9.327 & 94.102 & 92.034\\

.90 & .50 & .181 & .181 & 10.799 & 10.823 & 129.761 & 126.758\\

.90 & .90 & .237 & .237 & 15.275 & 15.251 & 258.585 &  253.285\\\hline
\end{tabular}
\subcaption{$p = 0.02.$}
    \label{tab:02twopoint}}
{    \begin{tabular}{|c|c|c|c|c|c|c|c|}
\hline
$\lambda$ & $\delta$  & \makecell{$\mathbb{P}(I)$\\ (Sim.)}& \makecell{$\mathbb{P}(I)$\\(Eqn.~\ref{eqn:mg1probapprox})} & \makecell{$\mathbb{E}[N_I]$\\ (Sim.)}  & \makecell{$\mathbb{E}[N_I]$\\ (Conj.~\ref{conj:mg1meaninfec})} & \makecell{Var$[N_I]$\\(Sim.)}& \makecell{Var$[N_I]$\\ (Eqn.~\ref{eqn:mg1varinfec})}\\\hline

.50 & .25 & .0938 & .0948 & .929 & .928 & 1.771 & 1.528\\

.50 & .50 & .101 & .101 & 1.019 & 1.013 & 2.228 & 2.004\\

.50 & .90 & .130 & .131 & 1.358 & 1.362 & 3.428 & 3.319\\\hline

.75 & .25 & .239 & .243 & 2.822 & 2.826 & 10.652 & 9.736\\

.75 & .50 & .262 & .263 & 3.233 & 3.206 & 14.101 & 13.256\\

.75 & .90 & .323 & .326 & 4.368 & 4.416 & 24.463 & 24.880\\\hline

.90 & .25 & .487 & .493 & 8.576 & 8.556 & 82.042 & 78.000 \\

.90 & .50 & .522 & .526 & 9.937 & 9.944 & 110.261 & 108.082\\

.90 & .90 & .602 & .604 & 13.911 & 14.051 & 211.570 & 216.321\\\hline
\end{tabular}
    \subcaption{$p = 0.1$.}
\label{tab:10twopoint}}    \caption{Simulation results for an arrival in the $M_{\lambda}/G/1$ queue in steady state with Symmetric 2-Point Distribution service times.}
\end{table}}
{\begin{table}[H] 
  {\begin{tabular}{|c|c|c|c|c|c|c|c|}
\hline
$\lambda$ & $\delta$  & \makecell{$\mathbb{P}(I)$\\ (Sim.)}& \makecell{$\mathbb{P}(I)$\\(Eqn.~\ref{eqn:mg1probapprox})} & \makecell{$\mathbb{E}[N_I]$\\ (Sim.)}  & \makecell{$\mathbb{E}[N_I]$\\ (Conj.~\ref{conj:mg1meaninfec})} & \makecell{Var$[N_I]$\\(Sim.)}& \makecell{Var$[N_I]$\\ (Eqn.~\ref{eqn:mg1varinfec})}\\\hline

.50 & .25 & .341 & .358 & .514 & .516 & .769 & .701\\

.50 & .50 & .347 & .358 & .561 & .562 & .924 & .869\\

.50 & .90 & .416 & .417 & .757 & .756 & 1.406 & 1.360\\\hline

.75 & .25 & .612 & .636 & 1.565 & 1.570 & 3.989 & 3.701\\

.75 & .50 & .622 & .636 & 1.782 & 1.782 & 5.165 & 4.889\\

.75 & .90 & .687 & .685 & 2.478 & 2.453 & 9.066 & 8.769\\\hline

.90 & .25 & .828 & .843 & 4.791 & 4.758 & 27.935 & 26.267\\

.90 & .50 & .834 & .843 & 5.514 & 5.509 & 36.440 & 35.553\\

.90 & .90 & .869 & .868 & 7.760 & 7.798 & 67.609 & 70.273\\\hline
\end{tabular}
        \stepcounter{figure}
        \setcounter{caption@flags}{4}
        \setcounter{subfigure}{10}
    \subcaption{$p =0.5.$}
 \label{tab:50twopoint} }
    \caption{Simulation results for an arrival in the $M_{\lambda}/G/1$ queue in steady state with Symmetric 2-Point Distribution service times.}
\label{tab:twopoint}
\end{table}}

{\color{black} The results in Table~\ref{tab:twopoint} are unsurprising following the earlier $M/G/1$ cases, and we notably see less variance than the with the Hyper-Exponential(2) service distribution, which similarly draws from two different distributions at random, but adds more variance by having those two distributions additionally be non-deterministic. This also highlights that in systems with higher rate of infectious customers there is both a greater chance of susceptible customers becoming infected (since there are more infectious customers to possibly interact with them) and also lower moments of the number each infectious customer infects (since there are fewer susceptible customers for them to potentially interact with).}

\subsubsection{Average queue length stability}\label{sec:Qstable}
{\color{black} This exploration provides us with insight into Conjecture~\ref{conj:mg1meaninfec}, by considering the behavior across different service distributions where we create consistency between the distributions in the form of stabilizing the average queue length. Our simulation framework is as follows:}

Firstly, $\ell =0.5$ is fixed for all experiments. We now define $\mu$ such that the average service time is 1 across all experiments and distributions.  We can now fix the average queue length by determining $\lambda$ using the Pollaczek–Khinchine formula. This allows us to explore the relationship between infection probability and queue length. We average over 5 simulations with 100,000 customers per simulation.

\begin{eqnarray*}
    \mathbb{E}[Q] &=& \lambda\mathbb{E}[S] + \frac{\lambda^2\mathbb{E}^2[S]+\lambda^2\text{Var}[S]}{2(1-\lambda\mathbb{E}[S])}.
\end{eqnarray*}

\begin{table}[h]
\footnotesize
{\begin{tabular}{|c|l|c|c|c|c|c|c|c|}
\hline
  $\mathbb{E}[Q]$    & \makecell{Sim.\\Avgs.}        & \makecell{Deter-\\ministic} & \makecell{Erlang\\2}  & \makecell{Erlang\\20} & \makecell{Hyper-\\Exponential} & \makecell{2-Point\\(0.25)} & \makecell{2-Point\\(0.5)} & \makecell{2-Point\\(0.9)} \\ \hline
 \multirow{3}{*}{1} & $\mathbb{P}(I)$   &  .0133       &  .0146   &  .0146   &  .0155        &  .0146        &  .0151      &  .0184      \\
       & $\mathbb{E}[N_I]$ & 1.354       & 1.488   & 1.452   & 1.608        & 1.463       & 1.562      & 1.818      \\ 
       &    Var$[N_I]$        & 2.957       & 4.374   & 3.385   & 9.987        & 3.483       & 4.370      & 5.668      \\ \cline{1-9} 
       \multirow{3}{*}{2} & $\mathbb{P}(I)$   &  .0316       &  .0319   &  .0327   &  .0339        &  .0339       &  .0345      &  .0399      \\   
       & $\mathbb{E}[N_I]$ & 3.279       & 3.261   & 3.313   & 3.638        & 3.365        & 3.511      & 4.037      \\   
      &    Var$[N_I]$        & 14.340      & 15.498  & 14.163  & 35.786       & 13.764      & 15.551     & 21.109     \\ \cline{1-9} 
       \multirow{3}{*}{4}  & $\mathbb{P}(I)$   &  .0675        &  .0668   &  .0692   &  .0682        &  .0684        &  .0714      &  .0786      \\   
       & $\mathbb{E}[N_I]$ & 7.027       & 6.948   & 7.404    & 7.626        & 7.275       & 7.474      & 8.686       \\   
      &    Var$[N_I]$        & 55.252      & 56.077  & 58.532  & 113.834      & 58.193      & 61.022     & 84.361     \\ \cline{1-9} 
       \multirow{3}{*}{8} & $\mathbb{P}(I)$   &  .134       &  .130   &  .135   &  .125        &  .138       &  .140      &  .164      \\   
  & $\mathbb{E}[N_I]$ & 14.869      & 15.088  & 15.469  & 14.316       & 15.595      & 16.837     & 19.030     \\   
      &    Var$[N_I]$        & 258.026     & 237.033 & 244.034 & 329.908      & 255.875     & 283.715    & 343.632    \\ \hline

\end{tabular}
\subcaption{$p=0.01.$}}
\label{tab:avg-length01}
{\begin{tabular}{|c|l|c|c|c|c|c|c|c|}
\hline
 $\mathbb{E}[Q]$  & \makecell{Sim.\\Avgs.}      & \makecell{Deter-\\ministic} & \makecell{Erlang\\2}  & \makecell{Erlang\\20} & \makecell{Hyper-\\Exponential} & \makecell{2-Point\\(0.25)} & \makecell{2-Point\\(0.5)} & \makecell{2-Point\\(0.9)} \\ \hline
\multirow{3}{*}{1} & $\mathbb{P}(I)$   &  .0275       &  .0283   &  .0281   &  .0316        &  .0281       &  .0297      &  .0358      \\   
       & $\mathbb{E}[N_I]$ & 1.374         & 1.446   & 1.416    & 1.619        & 1.441       & 1.546      & 1.787      \\   
      &    Var$[N_I]$        & 3.035       & 4.196   & 3.248  & 10.417       & 3.516       & 4.278      & 5.220      \\ \cline{1-9} 
       \multirow{3}{*}{2} & $\mathbb{P}(I)$   &  .0610       &  .0631   &  .0629   &  .0627        &  .0628       &  .0661      &  .0737      \\   
       & $\mathbb{E}[N_I]$ & 3.174        & 3.240   & 3.255   & 3.373        & 3.186       & 3.421      & 4.021      \\   
      &    Var$[N_I]$        & 12.669      & 15.946  & 13.592  & 30.956       & 12.743      & 15.526      & 19.978      \\ \cline{1-9} 
       \multirow{3}{*}{4} & $\mathbb{P}(I)$   &  .127       &  .127     &  .128    &  .119        &  .128       &  .135      &  .150      \\   
       & $\mathbb{E}[N_I]$ & 7.140       & 7.299   & 7.222   & 6.974        & 7.107        & 7.697      & 8.933      \\   
      &    Var$[N_I]$        & 58.601      & 64.449  & 57.108  & 97.821       & 56.813      & 69.530     & 94.048      \\ \cline{1-9} 
       \multirow{3}{*}{8} & $\mathbb{P}(I)$   &  .226       &  .236   &  .246   &  .216        &  .234       &  .245      &  .267       \\   
       & $\mathbb{E}[N_I]$ & 14.262      & 15.064  & 16.145  & 14.705       & 14.686      & 15.420     & 18.080     \\   
      &    Var$[N_I]$        & 229.584     & 264.046 & 301.633 & 335.643      & 202.051     & 239.803    & 350.918    \\ \hline
\end{tabular}
        \stepcounter{figure}
        \setcounter{caption@flags}{4}
        \setcounter{subfigure}{10}
\subcaption{$p=0.02.$}}
\label{tab:avg-length02}
\caption{Simulations for the $M/G/1$ queue when average queue length is fixed by varying $\lambda$.}
\end{table}

\begin{table}[h]
\footnotesize
{\begin{tabular}{|c|l|c|c|c|c|c|c|c|}
\hline
 $\mathbb{E}[Q]$     &\makecell{Sim.\\Avgs.}      & \makecell{Deter-\\ministic} & \makecell{Erlang\\2}  & \makecell{Erlang\\20} & \makecell{Hyper-\\Exponential} & \makecell{2-Point\\(0.25)} & \makecell{2-Point\\(0.5)} & \makecell{2-Point\\(0.9)} \\ \hline

 \multirow{3}{*}{1} & $\mathbb{P}(I)$   &  .125       &  .129   &  .129   &  .125         &  .126       &  .130       &  .154       \\   
       & $\mathbb{E}[N_I]$ & 1.280       & 1.369   & 1.335   & 1.558        & 1.295       & 1.355      & 1.646      \\   
      &    Var$[N_I]$        & 2.766       & 3.826    & 3.074   & 9.388        & 2.929       & 3.445      & 4.548      \\ \cline{1-9} 
       \multirow{3}{*}{2} & $\mathbb{P}(I)$   &  .248       &  .243   &  .251   &  .215        &  .251       &  .256      &  .289      \\   
       & $\mathbb{E}[N_I]$ & 2.964       & 2.963   & 2.960   & 3.187          & 2.974       & 3.153      & 3.689      \\   
      &    Var$[N_I]$        & 11.538      & 13.399  & 11.055  & 29.768       & 11.695      & 13.473     & 18.130     \\ \cline{1-9} 
       \multirow{3}{*}{4} & $\mathbb{P}(I)$   &  .420        &  .405   &  .422   &  .356        &  .434       &  .431      &  .470      \\   
       & $\mathbb{E}[N_I]$ & 6.441       & 6.397   & 6.703   & 6.760        & 6.795       & 6.712       & 8.193      \\   
      &    Var$[N_I]$        & 46.632      & 50.622   & 52.908   & 97.561       & 51.381       & 50.405     & 77.350     \\ \cline{1-9} 
       \multirow{3}{*}{8} & $\mathbb{P}(I)$   &  .598       &  .593   &  .609   &  .520        &  .614        &  .620       &  .653      \\   
       & $\mathbb{E}[N_I]$ & 13.620       & 13.071  & 14.164  & 14.136       & 14.808      & 14.838     & 17.071     \\   
      &    Var$[N_I]$        & 216.012     & 171.044 & 224.333 & 364.995      & 234.967     & 231.473    & 317.673    \\ \hline
      \end{tabular}
              \stepcounter{figure}
        \setcounter{caption@flags}{4}
        \setcounter{subfigure}{10}
  \subcaption{$p=0.1.$}}
  \label{tab:avg-length1}
    {\begin{tabular}{|c|l|c|c|c|c|c|c|c|}
\hline
 $\mathbb{E}[Q]$      & \makecell{Sim.\\Avgs.}     & \makecell{Deter-\\ministic} & \makecell{Erlang\\2}  & \makecell{Erlang\\20} & \makecell{Hyper-\\Exponential} & \makecell{2-Point\\(0.25)} & \makecell{2-Point\\(0.5)} & \makecell{2-Point\\(0.9)} \\ \hline
  
\multirow{3}{*}{1} & $\mathbb{P}(I)$   &  .420       &  .402   &  .422   &  .309        &  .417       &  .422      &  .458      \\   
       & $\mathbb{E}[N_I]$ &  .705        &  .748   &  .724   &  .831        &  .717       &  .777      &  .917      \\   
      &    Var$[N_I]$        & 1.163       & 1.479   & 1.242   & 3.103         & 1.229       & 1.455      & 1.881      \\ \cline{1-9} 
       \multirow{3}{*}{2} & $\mathbb{P}(I)$   &  .628       &  .586   &  .629    &  .445        &  .623       &  .619      &  .647      \\   
       & $\mathbb{E}[N_I]$ & 1.616        & 1.668   & 1.659   & 1.748        & 1.676       & 1.768      & 2.051      \\   
      &    Var$[N_I]$        & 4.043       & 5.019   & 4.303   & 9.329        & 4.553       & 5.068      & 6.500      \\ \cline{1-9} 
       \multirow{3}{*}{4} & $\mathbb{P}(I)$   &  .787       &  .750   &  .792   &  .591        &  .784       &  .780      &  .793        \\   
       & $\mathbb{E}[N_I]$ & 3.531       & 3.620   & 3.687   & 3.640         & 3.643         & 3.865      & 4.337      \\   
      &    Var$[N_I]$        & 15.650      & 18.160  & 17.161  & 28.346       & 16.861      & 20.507     & 23.162     \\ \cline{1-9} 
       \multirow{3}{*}{8} & $\mathbb{P}(I)$   &  .888       &  .862   &  .887   &  .725        &  .884       &  .887      &  .891      \\   
       & $\mathbb{E}[N_I]$ & 7.337       & 7.430   & 7.640   & 7.465         & 7.390       & 8.796      & 10.104      \\   
      &    Var$[N_I]$        & 56.261      & 63.164  & 65.055  & 97.696       & 55.121      & 94.298     & 114.934    \\ \hline
\end{tabular}
\subcaption{$p=0.5.$}}
\label{tab:avg-length5}
\caption{Simulations for the $M/G/1$ queue when average queue length is fixed by varying $\lambda$.}
\label{tab:avg-length}
\end{table}
\FloatBarrier
{\color{black} Table~\ref{tab:avg-length} provides insight for our conjecture about the important of average queue length on impacting the spread of infection in the system. The first thing we can note is the greater consistency across the service regimes of the probabilities and moments of infection spread. In particular for the distributions with lower variance and thus tighter clustering of the randomly drawn service times around their means, the greatest contributor to ensuring over the lifespan of the queue that the queue lengths are more consistently near the expected queue length.}

\section{Conclusion} \label{conclusion}
In this paper, we study the $M/M/1$ queue with infectious and susceptible customers.  We analyze a model where susceptible customers can become infected if they overlap with an infectious customer for more than $\ell$ units of time in the queue.  We provide new formulas for the probability of infection and expected number of infections an infectious customer might cause while in the system.  These new formulas and expressions provide new insights for disease mitigation in the context of service systems.  

There are many areas for further research on this topic.  The first would be to extend this model to many server queues.  This is more challenging as the overlap time for even two servers is more complicated because the ordering of departures is lost.  The second extension would be to extend this current model to one with time varying arrival rates or even analyzing the transient behavior of the model.  This is challenging since we need to know the transient or time varying behavior of the $M/M/1$ queue.  Fortunately, work by \citet{massey2023transient} might help in this regard.    Finally, it would be interesting to consider more complex thresholds that could take into account the amount of time that a customer spends with any infectious customer. These more complicated threshold would enable modeling of different ways that a susceptible customer could get infected from infectious customers.

\section*{Acknowledgements}
Jamol Pender would like to acknowledge the gracious support of the National Science Foundation DMS Award \# 2206286.  

\section{Appendix} \label{appendix}

\bibliographystyle{plainnat}
\bibliography{threshold}

\end{document}